\documentclass[11pt,reqno]{amsart}
\usepackage{a4wide}

\usepackage{mathrsfs}
\usepackage{amsfonts}
\usepackage{amsmath}
\usepackage{stmaryrd}
\usepackage{amssymb}
\usepackage{amsthm}
\usepackage{esint}

\usepackage{url}

\usepackage{amscd}
\usepackage{indentfirst}
\usepackage{enumerate}
\usepackage{graphicx}
\usepackage{appendix}

\usepackage[numbers,sort&compress]{natbib}

\usepackage{color}
\usepackage[colorlinks, linkcolor=blue, citecolor=blue]{hyperref}

\tolerance=1
\emergencystretch=\maxdimen
\hyphenpenalty=10000
\hbadness=10000

\newcommand{\pa}{\partial}

\renewcommand{\div}{{\rm div}}
\newcommand{\curl}{{\rm curl}}
\newcommand{\norm}[1]{\|#1\|}

\newtheorem{thm}{Theorem}[section]
\newtheorem{lem}[thm]{Lemma}
\newtheorem{pro}[thm]{Proposition}
\newtheorem{cor}[thm]{Corollary}
\newtheorem{rem}[thm]{Remark}

\numberwithin{equation}{section}

\begin{document}

\title[Nishida-Smoller type large solutions and exponential growth for CNS equations]
{Nishida-Smoller type large solutions and exponential growth for the compressible Navier-Stokes equations with slip boundary conditions in 3D bounded domain}

\author[S. Xu]{Saiguo Xu}
\address[Saiguo Xu]{School of Mathematics and Statistics, Guangxi Normal University, Guilin, Guangxi 541004, P.R. China.}
\email{xsgsxx@126.com}
\author[Y.H. Zhang]{Yinghui Zhang*}
\address[Yinghui Zhang] {School of Mathematics and Statistics, Guangxi Normal University, Guilin, Guangxi 541004, P.R.
China} \email{yinghuizhang@mailbox.gxnu.edu.cn}

\thanks{* Corresponding author.}

\thanks{This work was supported by National Natural Science
Foundation of China $\#$12271114, Guangxi Natural Science Foundation $\#$2024GXNSFDA010071, $\#$2019JJG110003, $\#$2019AC20214, Science and Technology Project of Guangxi $\#$GuikeAD21220114, the Innovation Project of Guangxi Graduate Education $\#$JGY2023061, Center for Applied Mathematics of Guangxi (Guangxi Normal University) and the Key Laboratory of Mathematical Model and Application (Guangxi Normal University), Education Department of Guangxi Zhuang Autonomous Region.}

\date{\today}

\begin{abstract}
This paper concerns the isentropic compressible Navier-Stokes equations in a three-dimensional (3D) bounded domain with slip boundary conditions and vacuum. It is shown that
the classical solutions to the initial-boundary-value problem of this system with large initial energy and vacuum  exist
globally in time and have an exponential decay rate which is decreasing with respect to the adiabatic exponent $\gamma>1$ provided that the fluid is nearly isothermal (namely, the adiabatic exponent is close enough to 1). This constitutes an extension of the celebrated result for the one-dimensional Cauchy problem of the isentropic Euler equations that has been established in 1973 by Nishida and Smoller (Comm. Pure Appl. Math. 26 (1973), 183-200). In addition, it is also shown that
the gradient of the density will grow unboundedly with an exponential rate when the initial vacuum appears (even at a point). 
 In contrast to previous related works,
where either small initial energy  are required or boundary effects are
ignored, this establishes the ﬁrst result on the global existence and exponential growth
 of large-energy solutions with vacuum to the 3D isentropic compressible
Navier–Stokes equations with slip boundary conditions.
\end{abstract}

\maketitle
{\small
\keywords {\noindent {\bf Keywords:} {Compressible Navier-Stokes equations; global existence; large initial energy; slip boundary conditions; vacuum.}
\smallskip
\newline
\subjclass{\noindent {\bf 2020 Mathematics Subject Classification:} 76N06; 76N10; 35M13; 35K65}
}

\section{Introduction}
The motion of a general viscous compressible, isentropic fluid in a domain $\Omega\subset\mathbb{R}^3$ is governed by the following compressible Navier-Stokes equations
\begin{equation}\label{Large-CNS-eq}
  \begin{cases}
    \pa_t\rho+\div(\rho u)=0, \\
    \pa_t(\rho u)+\div(\rho u\otimes u)-\mu\Delta u-(\mu+\lambda)\nabla\div u+\nabla P=0,
  \end{cases}
\end{equation}
where $\rho$, $u$, $P=a\rho^{\gamma}$ ($a>0$) denote the density, velocity and pressure respectively, $\gamma>1$ is the adiabatic exponent, $\mu$ and $\lambda$ represent the shear viscosity and bulk viscosity coefficients satisfying the physical restrictions:
\begin{equation}\label{viscosity-condition}
  \mu>0,\quad \lambda+\frac{2}{3}\mu\geq0.
\end{equation}
The equations \eqref{Large-CNS-eq} will be equipped with initial data
\begin{equation}\label{initial-data}
  (\rho, u)(x,0)=(\rho_0, u_0)(x), \quad x\in\Omega.
\end{equation}
In this paper, we assume that $\Omega\subset\mathbb{R}^3$ is a simply connected bounded domain with smooth boundary $\pa\Omega$. The system \eqref{Large-CNS-eq} is studied under the Navier-slip boundary conditions:
 \begin{equation}\label{Navier-slip-condition}
    u\cdot n=0,\, \curl u\times n=-An \textrm{ on }\pa\Omega,
 \end{equation}
 where $n$ is the unit outer normal to $\pa\Omega$, and $A=A(x)$ is a $3\times3$ symmetric matrix defined on $\pa\Omega$. There exist some different forms of slip boundary conditions related to \eqref{Navier-slip-condition}, where the detailed discussions can be found in \cite{Cai-Li2023}. 

There are large amounts of literature concerning the well-posedness theory for the compressible Navier-Stokes equations \eqref{Large-CNS-eq}. The one-dimensional problem has been investigated extensively by many people, see for instance \cite{Hoff1987,Kazhikhov1977,Serre1986-1,Serre1986-2,Kazhikhov1982} and references therein. The local well-posedness of multidimensional problem was studied by Nash \cite{Nash1962} and Serrin \cite{Serrin1959} in the absence of vacuum, while for the initial density with vacuum, the existence and uniqueness of local strong solution was proved in \cite{Cho2004,Cho2006-1,Cho2006-2,Choe2003,Salvi1993}. Matsumura and Nishida \cite{Matsumura1980} first proved the global existence of strong solutions with initial data close to the equilibrium, and later was extended to the discontinuous initial data by Hoff \cite{Hoff1991,Hoff1995}. For the existence of solutions for arbitrary large initial data in 3D, the major breakthrough was made by Lions \cite{Lions1998}, in which he showed global existence of weak solutions for the whole space, periodic domains, or bounded domains with Dirichlet boundary conditions with $\gamma>\frac{9}{5}$, and later was extended to the case  $\gamma>\frac{3}{2}$ by Feireisl \cite{Feireisl2001,Feireisl2004,Feireisl2004B}. When the initial data are
assumed to have some spherically symmetric or axisymmetric properties, Jiang and Zhang \cite{Jiang2001,Jiang2003}
proved the existence of global weak solutions for any $\gamma>1$. Shortly thereafter, Hoff \cite{Hoff2005} gave a new type of global weak solutions with small energy, which have extra regularity compared with the ones constructed by Lions-Feireisl in \cite{Lions1998,Feireisl2001} under an additional condition on viscosity coefficients and the far-field density $\tilde{\rho}>0$. 

Up to now, the uniqueness and regularity for weak solutions in \cite{Lions1998,Feireisl2001} (with arbitrarily large
initial data) still remains open. Recently, many important progress on global existence and uniqueness of classical solutions with large oscillations and vacuum to viscous compressible fluids in a barotropic regime have been made. Huang, Li and Xin \cite{Huang-Li2012} first established the global existence of classical solutions to 3D Cauchy problem of the isentropic compressible Navier–Stokes equations with small initial total energy but possibly large oscillations. Later, Li and Xin \cite{Li-Xin2019} studied the 2D Cauchy problem and the large time asymptotic behavior of solutions with small initial total energy. 
Very recently, Hong-Hou-Peng-Zhu \cite{Zhu2024} provided a positive result under the condition that the adiabatic exponent $\gamma$ is close to 1, which says that classical solutions to Cauchy problem of the isentropic compressible Navier-Stokes equations exist globally with allowing the large initial energy and the presence of vacuum. This type solution can be viewed as the Nishida-Smoller type large solution which is originally studied for the conservation laws with BV initial data in \cite{Nishida-Smoller1973}, where Nishida and Smoller showed the global existence of solutions to the Cauchy problem of 1D isentropic Euler equations under the condition that $(\gamma-1).\text{total var.}\{u_0,\rho_0\}$ is sufficiently small. In particular, this result implies that the initial energy could be large as $\gamma$ is sufficiently close to 1. For some generalizations of the Nishida-Smoller type results on inviscid or viscous flow, one can see for instance \cite{Kawashima-Nishida1981,Liu-Yang-Zhao2014,Tan-Yang-Zhao2013,Liu1977,Temple1981,Zhu2017}.
For compressible Navier-Stokes equations \eqref{Large-CNS-eq} in half space $\mathbb{R}^3_+$ with slip boundary conditions, Hoff \cite{Hoff2005} established the global existence of weak solutions under the assumption that the initial energy is suitably small. 
For compressible Navier-Stokes equations \eqref{Large-CNS-eq} in a non axis-symmetric domain, Novotn\'{y} and Str\v{a}skraba \cite{Novotny} proved global existence of weak solutions.
For compressible Navier-Stokes equations \eqref{Large-CNS-eq} in a general bounded smooth domain,
the global existence of strong (or classical) solutions has been investigated for the 3D case with slip boundary condition in \cite{Cai-Li2023}, and the 2D case with similar boundary condition in \cite{Fan-Li-Li2022}, both of which are equipped with small initial total energy and vacuum; even for the 3D bounded domain with non-slip boundary condition, Fan and Li \cite{Fan-Li-arX2021} proved the global existence of classical solutions to the barotropic compressible Navier-Stokes system with small initial energy. 
Also, one can refer to \cite{Cai-Li-Lv2021} for the exterior domain case. 

In conclusions, all the works \cite{Huang-Li2012, Li-Xin2019, Hoff2005, Cai-Li2023, Cai-Li-Lv2021, Nishida-Smoller1973, Zhu2024} depend essentially on small initial energy
or the advantage of the whole space. Therefore, a natural and important
problem is to study what will happen if both large initial energy and 
boundary effects are involved.
That is to say, to investigate the global well-posedness and
large time behavior of Nishida-Smoller type large solutions to compressible Navier-Stokes equations \eqref{Large-CNS-eq} with slip boundary conditions \eqref{Navier-slip-condition} and vacuum.
However, to the best of our knowledge, up to now, this problem still remains open.
The main purpose of this work is to resolve this problem.
More precisely, we prove the global existence and uniqueness of
classical solutions to the initial-boundary-value problem \eqref{Large-CNS-eq}-\eqref{Navier-slip-condition} with large initial energy and vacuum provided that the adiabatic exponent is sufficiently close to 1.
Moreover, we also prove that the classical solutions have an exponential decay rate which is decreasing with respect to the adiabatic exponent.
Finally, we prove that  the gradient of the density will grow unboundedly with an exponential rate if the vacuum appears (even at a point) initially. 
 This generalizes the previous related works in \cite{Huang-Li2012, Li-Xin2019, Hoff2005, Cai-Li2023, Cai-Li-Lv2021, Nishida-Smoller1973, Zhu2024},
where either small initial energy  are required or boundary effects are
absent.

\bigskip

Before stating our result, let us introduce the following notations and conventions used throughout this paper. We set
\begin{equation*}
  \int f=\int_{\Omega}fdx,\quad\int_0^Tg=\int_0^Tgdt
\end{equation*}
and
\begin{equation*}
  \bar{f}:=\frac{1}{|\Omega|}\int_{\Omega}fdx
\end{equation*}
which is the average of $f$ on $\Omega$.

For $1\leq r\leq\infty$, and integer $k\geq1$, we denote the standard Sobolev spaces as follows:
\begin{equation}\label{Notation-Sobolev}
  \begin{cases}
     L^r=L^r(\Omega),\,D^{k,r}=\{u\in L_{loc}^1(\Omega): \norm{\nabla^ku}_{L^r}<\infty\},\\
     W^{k,r}=L^r\cap D^{k,r},\,H^k=W^{k,2},\, D^k=D^{k,2},\\
     D_0^1=\{u\in L^6: \norm{\nabla u}_{L^2}<\infty,\textrm{ and \eqref{Navier-slip-condition} holds}\},\\
    H_0^1=L^2\cap D_0^1,\,\norm{u}_{D^{k,r}}=\norm{\nabla^ku}_{L^r}.
  \end{cases}
\end{equation}
For some $s\in(0,1)$, the fractional Sobolev space $H^s(\Omega)$ is defined by
\begin{equation*}
  H^s(\Omega):=\left\{u\in L^2(\Omega): \int_{\Omega\times\Omega}\frac{|u(x)-u(y)|^2}{|x-y|^{n+2s}}dxdy<\infty\right\}
\end{equation*}
with the norm:
\begin{equation*}
  \norm{u}_{H^s(\Omega)}:=\norm{u}_{L^2(\Omega)}+\left(\int_{\Omega\times\Omega}\frac{|u(x)-u(y)|^2}{|x-y|^{n+2s}}dxdy\right)^{\frac{1}{2}}.
\end{equation*}
The initial total energy of \eqref{Large-CNS-eq} is defined as
\begin{equation}\label{defi-total-energy}
  E_0:=\int(\frac{1}{2}\rho_0|u_0|^2+\frac{1}{\gamma-1}P(\rho_0)),
\end{equation}
and the modified initial energy involving $\gamma-1$ is denoted as
\begin{equation}\label{defi-modified-energy}
  \mathcal{E}_0:=\int\frac{1}{2}\rho_0|u_0|^2+(\gamma-1)E_0.
\end{equation}
In the following, we denote by $C>0$ a generic constant depending on $\mu, \lambda, a, \tilde{\rho}, \Omega, M$ and the matrix $A$, but independent of $\gamma-1, E_0, \mathcal{E}_0$ and $t$. And we write $C(\alpha)$ to emphasize the dependence of $C$ on the parameter $\alpha$. 

Now, we are in a position to state our main results.
\begin{thm}\label{thm-global-CNS}
 Let $\Omega$ be a simply connected bounded domain in $\mathbb{R}^3$ and its smooth boundary $\pa\Omega$ has a finite number of 2-dimensional connected components. For given positive constants $M$ and $\tilde{\rho}$, suppose that the $3\times3$ symmetric matrix $A$ in \eqref{Navier-slip-condition} is smooth and positive semi-definite, and the initial data $(\rho_0,u_0)$ satisfy for some $q\in(3,6)$,
 \begin{equation}\label{initial-data1}
   (\rho_0,u_0)\in W^{2,q},\quad u_0\in\{f\in H^2: f\cdot n=0,\,\curl f\times n=-Af\textrm{ on }\pa\Omega\},
 \end{equation}
 \begin{equation}\label{initial-data2}
   0\leq\rho_0\leq\tilde{\rho},\quad\norm{\nabla u_0}_{L^2}\leq M,
 \end{equation}
 and the compatibility condition
 \begin{equation}\label{compatibility-condition}
   -\mu\Delta u_0-(\mu+\lambda)\nabla\div u_0+\nabla P(\rho_0)=\rho_0^{\frac{1}{2}}g,
 \end{equation}
 for some $g\in L^2$. Then the initial-boundary value problem \eqref{Large-CNS-eq}-\eqref{Navier-slip-condition} admits a unique classical solution $(\rho,u)$ in $\Omega\times(0,\infty)$ satisfying that
 \begin{equation}\label{density-bound}
   0\leq\rho(x,t)\leq2\tilde{\rho},\,(x,t)\in\Omega\times(0,\infty),
 \end{equation}
 and for any $0<\tau<T<\infty$,
 \begin{equation}\label{classical-sol}
   \begin{cases}
     (\rho,P)\in C([0,T];W^{2,q}),\\
     \nabla u\in C([0,T];H^1)\cap L^{\infty}(\tau,T;W^{2,q}),\\
     u_t\in L^{\infty}(\tau,T;H^2)\cap H^1(\tau,T;H^1),\\
     \sqrt{\rho}u_t\in L^{\infty}(0,\infty;L^2),
   \end{cases}
 \end{equation}
 provided
 \begin{equation}\label{small-condition}
   \mathcal{E}_0\leq\epsilon, \quad \norm{A}_{W^{1,6}}\leq\hat{\epsilon}.
 \end{equation} 
 Here $\epsilon>0$ is a small constant depending only on $\mu, \lambda, \gamma, a, \tilde{\rho}, \Omega, M, E_0$, and the matrix $A$, but independent of $\gamma-1$ and $t$, $\hat{\epsilon}>0$ is a small constant depending only on $\mu, \lambda$ and $\Omega$. According to \eqref{small-assumption1}, \eqref{small-assumption2}, \eqref{small-assumption3}, \eqref{small-assumption4} and \eqref{small-A-assumption1}, $\epsilon$ and $\hat{\epsilon}$ can be respectively precisely characterized in the following form:
 \begin{equation*}
 \begin{aligned}
   \epsilon&=\min\left\{1,(4C(\tilde{\rho}))^{-12}),(C(\tilde{\rho},M))^{-2},
   (2C(\tilde{\rho}))^{-16},(2C(\tilde{\rho},M)(E_0+1))^{-2},(2C(\tilde{\rho})E_0^{\frac{1}{2}})^{-32},\right.\\
   &\qquad\quad\left. (2C(\tilde{\rho},M))^{-\frac{16}{9}},\left(\frac{\tilde{\rho}}{2C(\tilde{\rho},M)}\right)^{-12},(C(\tilde{\rho}))^{-1},\left(\frac{\tilde{\rho}}{4C(\tilde{\rho})(1+E_0)}\right)^3\right\}
   \end{aligned}
 \end{equation*}
 and
 \begin{equation*}
   \hat{\epsilon}=\min\left\{(2C)^{-\frac{1}{3}}, (27C(\Omega))^{-\frac{1}{3}}, (2CC(\Omega))^{-\frac{1}{3}}\right\}
 \end{equation*}
 with $C$ here depending only on $\mu,\lambda$ and $\Omega$, and $C(\Omega)$ only depending on $\Omega$.
 
 Moreover, if $\bar{\rho}\leq1$ and $\frac{\tilde{\rho}}{\bar{\rho}}\geq3$, then for any $\gamma\in(1,\frac{3}{2}]$, $r\in[1,\infty)$ and $p\in[1,6]$, there exists positive constants $\tilde{C}$ and $\eta_0$, with $\tilde{C}$ depending only on $\mu,\lambda,\gamma,a,\tilde{\rho},\bar{\rho},M,\Omega,r,p$ and the matrix $A$, and $\eta_0$ depending only on $\mu,\lambda,a,\Omega,r,p$, $\tilde{\rho}$ and $\frac{\tilde{\rho}}{\bar{\rho}}$, but independent of $\gamma-1$, such that for any $t\geq1$, it holds that
\begin{equation}\label{energy-decay-thm}
  \norm{\rho-\bar{\rho}}_{L^r}+\norm{u}_{W^{1,p}}+\norm{\sqrt{\rho}\dot{u}}_{L^2}\leq \tilde{C}e^{-\eta_0\bar{\rho}^{\gamma}t}.
\end{equation}
On the other hand, if $\gamma>\frac{3}{2}$, there exists similar exponent decay result as follows:
\begin{equation}\label{energy-decay-thm-large-gamma}
  \norm{\rho-\bar{\rho}}_{L^r}+\norm{u}_{W^{1,p}}+\norm{\sqrt{\rho}\dot{u}}_{L^2}\leq \tilde{C}_1e^{-\eta_1t}
\end{equation}
where positive constants $\tilde{C}_1$ and $\eta_1$ depend only on $\mu,\lambda,\gamma,a,\tilde{\rho},\bar{\rho},\Omega,r,p$ and the matrix $A$ (with $\tilde{C}_1$ depending on $M$ as well).
\end{thm}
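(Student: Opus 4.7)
The plan is to construct the global classical solution via the standard two-step scheme (local well-posedness plus uniform-in-time a priori bounds closed by continuity), but adapted to handle \emph{large} initial energy through the smallness of $\gamma-1$ in the spirit of Nishida--Smoller. First I would invoke a local existence theorem for the slip-boundary problem with vacuum (of Cai--Li type for \eqref{Large-CNS-eq}--\eqref{Navier-slip-condition} under the compatibility condition \eqref{compatibility-condition}) to obtain a unique classical solution on a maximal interval $[0,T^*)$. A Serrin-type continuation criterion then reduces the proof of $T^*=\infty$ to establishing a priori bounds on $\|\rho\|_{L^\infty}$, $\|\nabla u\|_{L^2}$, and $\|\sqrt{\rho}\dot u\|_{L^2}$ that depend only on the fixed quantities $\tilde\rho, M, E_0$ and on the smallness parameters $\epsilon,\hat\epsilon$ announced in \eqref{small-condition}.

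The core a priori step is the density upper bound $\rho\leq 2\tilde\rho$ on $\Omega\times(0,\infty)$. I would work with the \textbf{effective viscous flux} $F=(2\mu+\lambda)\div u-(P-\bar P)$ and the vorticity $\omega=\curl u$; under the slip condition \eqref{Navier-slip-condition} both satisfy elliptic problems with good boundary data (with the matrix $A$ producing a lower-order boundary perturbation of size $\|A\|_{W^{1,6}}$, absorbed by $\hat\epsilon$). Rewriting the continuity equation along particle trajectories gives an ODE of the form $\tfrac{d}{dt}\log\rho=-\tfrac{1}{2\mu+\lambda}\bigl(F+P-\bar P\bigr)$, and Zlotnik's lemma yields the density bound \eqref{density-bound} once one controls $\int_0^t\|F\|_{L^\infty}$ by a small constant. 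That control is where the Nishida--Smoller trick enters: the pressure contribution is estimated via the \emph{modified} energy $\mathcal{E}_0$ from \eqref{defi-modified-energy} rather than $E_0$ alone, since the factor $\gamma-1$ can be pulled out of $\tfrac{1}{\gamma-1}P(\rho)$ to reveal the genuine smallness. I would close this by a bootstrap on the functional $\sup_t\bigl(\|\nabla u\|_{L^2}^2+\|\rho-\bar\rho\|_{L^\gamma}^\gamma\bigr)+\int_0^\infty\|\sqrt\rho\,\dot u\|_{L^2}^2\,dt$, coupled to higher-order estimates on $\dot u$ (tested against $u_t$ and $u_{tt}$) that upgrade to the regularity \eqref{classical-sol}.

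For the long-time decay \eqref{energy-decay-thm}--\eqref{energy-decay-thm-large-gamma}, once uniform-in-time bounds are in hand and the gap condition $\tilde\rho/\bar\rho\geq 3$ provides a definite distance to vacuum \emph{in mean}, I would linearize the pressure around the conserved average $\bar\rho$ and derive a differential inequality of the form $\tfrac{d}{dt}\mathcal F(t)+\eta_0\bar\rho^\gamma \mathcal F(t)\leq 0$ for the coercive functional $\mathcal F\sim\|\sqrt\rho u\|_{L^2}^2+\tfrac{a\gamma}{\gamma-1}\int(\rho^\gamma-\bar\rho^\gamma-\gamma\bar\rho^{\gamma-1}(\rho-\bar\rho))$. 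The prefactor $\bar\rho^\gamma$ records the coercivity of the linearized pressure together with a Poincar\'e--Korn inequality for $u$ under \eqref{Navier-slip-condition}; for $\gamma\in(1,3/2]$ the factor appears explicitly and uniformly in $\gamma-1$, whereas for $\gamma>3/2$ the pressure is superlinear and a cruder bound suffices, giving \eqref{energy-decay-thm-large-gamma}. Exponential decay of $\|\rho-\bar\rho\|_{L^r}$ and $\|u\|_{W^{1,p}}$ then follows by interpolating the $L^2$-decay of $\mathcal F$ against the already-established uniform higher-order bounds.

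The main obstacle I anticipate is closing the density bound and the decay rate \emph{uniformly} in $\gamma-1$. Two difficulties are knotted together. (i) The slip condition \eqref{Navier-slip-condition} injects boundary terms involving $A$ in every integration by parts used to estimate $\nabla u_t$, $\nabla\dot u$, $F$ and $\omega$; these must be controlled solely by $\|A\|_{W^{1,6}}\leq\hat\epsilon$ and the elliptic regularity of $F,\omega$ must be re-proved with the Navier boundary data, which forces the $\hat\epsilon$ in \eqref{small-condition} to be independent of all other parameters. (ii) Because the final constants in the decay rate must be independent of $\gamma-1$, one cannot casually trade powers of $\gamma-1$ for smallness: the Gronwall-type arguments must see $\gamma-1$ only through the structural quantity $\mathcal{E}_0$ (never through $E_0$ directly), and the pressure potential estimates must be carried out so that no coefficient like $(\gamma-1)^{-1}$ leaks into the a priori bound. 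Carrying both requirements simultaneously through the cascade of estimates for $\rho, \nabla u, \sqrt\rho\dot u$, and then proving the time-weighted $H^2$-regularization to reach \eqref{classical-sol}, is where I expect essentially all of the technical work to concentrate.
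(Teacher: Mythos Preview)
Your high-level architecture---local existence plus a continuity/bootstrap argument, Zlotnik's lemma applied to the effective viscous flux to bound the density, and an energy functional yielding exponential decay---matches the paper's. But two of the steps you sketch hide precisely the difficulties that distinguish this problem from Cai--Li's small-energy result, and as written your bootstrap would not close.

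First, the bootstrap functional you propose, $\sup_t\|\nabla u\|_{L^2}^2+\int_0^\infty\|\sqrt\rho\,\dot u\|_{L^2}^2$, cannot be made small: at $t=0$ one only has $\|\nabla u_0\|_{L^2}\le M$. The paper instead bootstraps \emph{time-weighted} quantities
\[
A_1(T)=\sup_t\sigma\|\nabla u\|_{L^2}^2+\int_0^T\sigma\|\sqrt\rho\,\dot u\|_{L^2}^2,\qquad
A_2(T)=\sup_t\sigma^3\|\sqrt\rho\,\dot u\|_{L^2}^2+\int_0^T\sigma^3\|\nabla\dot u\|_{L^2}^2,
\]
with $\sigma=\min\{1,t\}$, together with $A_3(\sigma(T))=\sup_{[0,\sigma(T)]}\int\rho|u|^3$. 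The weights make the assumptions $A_1\le 2\mathcal E_0^{3/8}$, $A_2\le 2\mathcal E_0^{1/2}$ consistent at $t=0$, and the specific exponents $3/8$ and $1/2$ are not arbitrary: the slip boundary term $\int_{\partial\Omega}\sigma^m u\cdot\nabla n\cdot u\,G$ forces an $A_1^{3/2}$ contribution in the $A_2$ estimate, so one needs $A_1^{3/2}\ll A_2\ll A_1$. Relatedly, the basic energy estimate gives only $\int_0^T\|\nabla u\|_{L^2}^2\le CE_0$, which is \emph{not} small; only $\int_0^{\sigma(T)}\|\nabla u\|_{L^2}^2\le C\mathcal E_0$ is small. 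This is the structural obstacle that drives the whole argument, and it is why the paper's $\epsilon$ legitimately depends on $E_0$ (contrary to your remark that the estimates must ``never see $E_0$ directly'').

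Second, your description of $\hat\epsilon$ as merely absorbing ``lower-order boundary perturbations'' misses its actual role. The hardest term to control is $\int_0^T\sigma^3\|\nabla u\|_{L^4}^4$; estimating it via $\|G\|_{L^4}^4+\|\curl u\|_{L^4}^4+\|P-\bar P\|_{L^4}^4$ and then $\|\curl u\|_{L^6}\le C(\|\rho\dot u\|_{L^2}+\|A\|_{W^{1,6}}\|\nabla u\|_{L^2})$ produces a term $C\|A\|_{W^{1,6}}^3\int_0^T\sigma^3\|\nabla u\|_{L^4}^4$ on the right, and this can only be absorbed into the left by taking $\|A\|_{W^{1,6}}$ small. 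Without this observation the $A_2$ estimate does not close at all. For the decay, the paper uses the Bogovskii operator $\mathcal B[\rho-\bar\rho]$ to produce the cross term that converts $\int(P-P(\bar\rho))(\rho-\bar\rho)$ into dissipation, together with an explicit pointwise comparison of $(\rho-\bar\rho)^2$, $G(\rho)$, and $(P-P(\bar\rho))(\rho-\bar\rho)$ tracking the $\bar\rho^\gamma$ dependence; your sketch is compatible with this but does not identify the mechanism.
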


With Theorem \ref{thm-global-CNS} in hand, we will give a corollary to state the large-time behavior of $\nabla\rho$ when vacuum appears initially. This corollary is a direct consequence from the exponent decay of classical solutions as in \eqref{energy-decay-thm} and \eqref{energy-decay-thm-large-gamma}, and the proof can be referred to \cite{Cai-Li2023} for details and is omitted here.

\begin{cor}
  Under the conditions of Theorem \ref{thm-global-CNS}, assume further that there exists certain point $x_0\in\Omega$ such that $\rho(x_0)=0$. Then the unique classical solution $(\rho, u)$ obtained in Theorem \ref{thm-global-CNS} satisfies that for any $r_1>3$, there exist positive constants $\tilde{C}_2$ and $\eta_2$, with $\tilde{C}_2$ depending only on $\mu,\lambda,\gamma,a,\tilde{\rho},\bar{\rho},\Omega,r_1$, and $\eta_2$ depending only on $\mu,\lambda,a,\Omega,r_1$, $\tilde{\rho}$ and $\frac{\tilde{\rho}}{\bar{\rho}}$, but independent of $\gamma-1$, such that for any $\gamma\in(1,\frac{3}{2}]$ and $t\geq1$,
  \begin{equation}\label{growth-cor1}
    \norm{\nabla\rho(t)}_{L^{r_1}}\geq\tilde{C}_2e^{\eta_2\bar{\rho}^{\gamma}t},
  \end{equation}
  and positive constants $\tilde{C}_3$ and $\eta_3$ depending only on $\mu,\lambda,\gamma,a,\tilde{\rho},\bar{\rho},\Omega,r_1$ such that for any $\gamma\in(\frac{3}{2},\infty)$ and $t\geq1$,
  \begin{equation}\label{growth-cor2}
    \norm{\nabla\rho(t)}_{L^{r_1}}\geq\tilde{C}_3e^{\eta_3t}.
  \end{equation}
\end{cor}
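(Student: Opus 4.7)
The plan is to exploit the tension between two facts: by Theorem~\ref{thm-global-CNS}, the density converges in $L^r$ to the positive constant $\bar\rho$, while the continuity equation propagates the initial vacuum along the particle path issued from $x_0$, so the density vanishes permanently at that (moving) point. The gradient of $\rho$ in $L^{r_1}$ must absorb the resulting gap, and quantifying this via Gagliardo--Nirenberg interpolation will produce the exponential growth.

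The first step is Lagrangian. I would introduce the characteristic $X(t)$ defined by $\dot X(t)=u(X(t),t)$, $X(0)=x_0$. The regularity in \eqref{initial-data1} and \eqref{classical-sol} yields $\nabla u\in L^\infty_{\rm loc}([0,\infty);W^{1,q})$ with $q>3$, so $u(\cdot,t)$ is Lipschitz on $\overline\Omega$ by Morrey's embedding and the ODE has a unique solution; the slip condition $u\cdot n=0$ on $\partial\Omega$ keeps $X(t)\in\overline\Omega$ for all $t\ge 0$ (via a standard barrier argument on $\dist(X(t),\partial\Omega)$). Rewriting \eqref{Large-CNS-eq}$_1$ along the trajectory gives
\begin{equation*}
\frac{d}{dt}\rho(X(t),t)=-(\div u)(X(t),t)\,\rho(X(t),t),
\end{equation*}
so $\rho(X(t),t)=\rho_0(x_0)\exp\bigl(-\int_0^t(\div u)(X(s),s)\,ds\bigr)=0$, and hence $\|\rho(\cdot,t)-\bar\rho\|_{L^\infty(\Omega)}\ge\bar\rho$ for every $t\ge 0$.

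The second step is interpolation. For $r_1>3$ and any $r\in[1,\infty)$, the Gagliardo--Nirenberg inequality on $\Omega$ provides some $\theta=\theta(r,r_1)\in(0,1)$ with
\begin{equation*}
\|\rho-\bar\rho\|_{L^\infty}\le C\|\nabla\rho\|_{L^{r_1}}^{\theta}\|\rho-\bar\rho\|_{L^r}^{1-\theta}+C\|\rho-\bar\rho\|_{L^r}.
\end{equation*}
Applying \eqref{energy-decay-thm} (respectively \eqref{energy-decay-thm-large-gamma}) shows that the final term eventually drops below $\bar\rho/2$, so combining with the lower bound from the previous step and rearranging,
\begin{equation*}
\|\nabla\rho\|_{L^{r_1}}\ge C\bar\rho^{1/\theta}\|\rho-\bar\rho\|_{L^r}^{-(1-\theta)/\theta}\ge \tilde C_2\exp\bigl(\tfrac{1-\theta}{\theta}\eta_0\bar\rho^{\gamma}t\bigr),
\end{equation*}
which is \eqref{growth-cor1} with $\eta_2=\frac{1-\theta}{\theta}\eta_0$; a parallel computation using \eqref{energy-decay-thm-large-gamma} produces \eqref{growth-cor2}.

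The main obstacle, as I see it, is the rigorous Lagrangian propagation of vacuum: one must verify that the flow $X(\cdot)$ is globally well-defined, remains inside $\overline\Omega$, and yields a genuine pointwise identity $\rho(X(t),t)=0$. This requires careful use of the regularity \eqref{classical-sol} up to the boundary, together with the slip condition. Once this identity is established, the remainder of the proof is a short interpolation and bookkeeping of constants, which is presumably why the authors defer to \cite{Cai-Li2023} for details.
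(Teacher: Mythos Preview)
Your proposal is correct and follows essentially the same route as the argument in \cite{Cai-Li2023} to which the paper defers: propagate the vacuum along the characteristic $X(t)$ to force $\|\rho(\cdot,t)-\bar\rho\|_{L^\infty}\ge\bar\rho$, then feed this into the Gagliardo--Nirenberg inequality \eqref{GN-inequality2} together with the exponential decay \eqref{energy-decay-thm} (or \eqref{energy-decay-thm-large-gamma}) to extract the exponential lower bound on $\|\nabla\rho\|_{L^{r_1}}$. The only cosmetic point is that the bound \eqref{growth-cor1} is claimed for all $t\ge 1$, whereas your argument literally gives it once $C\|\rho-\bar\rho\|_{L^r}<\bar\rho/2$; this is harmless, since on any compact initial interval the persistence of vacuum already forces a positive lower bound on $\|\nabla\rho\|_{L^{r_1}}$, so one simply shrinks $\tilde C_2$ accordingly.
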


Here we list some remarks as follows.
\begin{rem}\label{remark-1}
  Compared to Hong-Hou-Peng-Zhu \cite{Zhu2024} where a Nishida-Smoller type large solution is obtained for the Cauchy problem of compressible Navier-Stokes equations, 
  the main novelties here can be outlined as follows.
  First, we need to deal with additional difficulties from boundary effects.
  Second, we prove that the classical solution has an exponential decay rate
which is decreasing with respect to the adiabatic exponent provided that the fluid is
nearly isothermal. This is totally new as compared to \cite{Zhu2024} where there is no information on the large time behavior of the solution.
Third, we show that the gradient of the
density will grow unboundedly with an exponential rate when the initial state contains vacuum.
(even at a point), which is also completely new as compared to \cite{Zhu2024}.
\end{rem}

\begin{rem}\label{remark-2}
  Compared to Cai-Li \cite{Cai-Li2023} where the global existence and large time behavior of classical solutions to \eqref{Large-CNS-eq}-\eqref{Navier-slip-condition} with
  small initial energy and vacuum are obtained, the main novelties can be outlined as follows.
  First, in our case, the initial energy $E_0$ is allowed to be large when $\gamma-1$ and the matrix $A$ are suitably small. Therefore, Theorem \ref{thm-global-CNS} is still applicable to the case that the initial energy $E_0$ is small for any given $\gamma$ and $A$. 
  Second, we give the explicit exponent decay rate presented in \eqref{energy-decay-thm} which is decreasing with respect to $\gamma$. This can be verified by $\bar{\rho}^{\gamma}\leq(a|\Omega|)^{-1}(\gamma-1)E_0$ in \eqref{bound-rho-average}. 
  It is worth mentioning that this phenomenon is totally new as compared to \cite{Cai-Li2023}.
\end{rem}

\begin{rem}\label{remark-3}
  Since our results allow large initial energy $E_0$ as $\gamma-1$ tends to 0, Theorem \ref{thm-global-CNS} can be viewed as a special extension of the uniqueness and regularity theory of weak solutions constructed by Lions \cite{Lions1998} and Feireisl \cite{Feireisl2001} which require that the initial energy is small, but allow large initial energy for $\gamma>\frac{3}{2}$. However, although the initial energy could be large as $\gamma$ close to 1, it is still open whether global classical solutions exist or not when the initial data are large for any given $\gamma$.  
\end{rem}

\begin{rem}\label{remark-4}
   In our results, we can extract that $(\gamma-1)E_0^{17}\leq C$, which means $E_0\leq C(\gamma-1)^{-\frac{1}{17}}$. This is very different from \cite{Zhu2017,Zhu2024} due to the slip boundary conditions \eqref{Navier-slip-condition}. This allows the large initial energy as $\gamma$ is close to 1. In addition, the smallness condition in \eqref{small-condition} imposed on the matrix $A$ is different from \cite{Zhu2024}, but it can be seen as a similar constraint on boundary as compared to the smallness assumption on far-field density in \cite{Zhu2024}. In particular, our results hold for the usual case that the matrix $A=0$. It should be mentioned that the smallness of $A$ only depends on $\mu,\lambda$ and $\Omega$, but independent of the density $\rho$, velocity $u$ and pressure $P$.
\end{rem}

\begin{rem}\label{remark-5}
  In addition to the conditions of Theorem \ref{thm-global-CNS}, if assuming further that $\norm{u_0}_{H^{\beta}}\leq\tilde{M}$ with $\beta\in(\frac{1}{2},1]$ instead of $\norm{\nabla u_0}_{L^2}\leq M$, then the conclusions in Theorem \ref{thm-global-CNS} still hold. This can be achieved by a similar way as in \cite{Cai-Li2023}. In our results, we also do not focus on the regularity of the bounded domain $\Omega$ and the matrix $A$, but we can make analogous discussions as in \cite{Cai-Li2023}.
\end{rem}

Now, we make some comments on the analysis of this paper. Similar to the arguments in \cite{Cai-Li2023} and \cite{Zhu2024}, the key issue in our proof is to derive the time-independent upper bound on the density $\rho$ (see Lemma \ref{lem-rho-bound}). Compared to \cite{Cai-Li2023} where the analysis relies heavily on the smallness of the initial energy $E_0$,
the new difficulty here lies in that in our case, the initial energy could be large when the adiabatic exponent $\gamma$ is sufficiently close to 1.
Indeed, with the help of the smallness of the initial energy $E_0$, Cai-Li \cite{Cai-Li2023} derives the smallness of
$\displaystyle\int_0^T\norm{\nabla u}_{L^2}^2$ and $\displaystyle\norm{P}_{L^1}$ by the elementary energy estimate directly (see Lemma \ref{lem-essential-energy}) . 
However, due to the absence of smallness on the initial energy, we can not extract the smallness of 
$\displaystyle\int_0^T\norm{\nabla u}_{L^2}^2$ from the elementary energy estimate.
Therefore, we need develop some new ingredients to overcome this difficulty.
On the other hand, as compared to Hong-Hou-Peng-Zhu \cite{Zhu2024} where the Cauchy problem is considered, we need employ some new observations and ideas to overcome the difficulties from the Navier-slip boundary conditions \eqref{Navier-slip-condition}.  We now highlight the main
differences and ingredients as follows:
\begin{itemize}
  \item As mentioned before, to derive the time-independent upper bound of the density $\rho$, due to the lack of smallness of $E_0$, the smallness of $\displaystyle\int_0^T\norm{\nabla u}_{L^2}^2$ can not be extracted from the basic energy estimate directly. However, we can obtain 
      the smallness of $\norm{P}_{L^1}$ by the basic energy estimate directly. This is very different from \cite{Zhu2024} where the proof depends heavily on the relationship between $\norm{P(\rho)-P(\tilde{\rho})}_{L^2}^2$ and $\displaystyle\int G(\rho)$.
      It should be mentioned that we give an explicit relationship of these two terms  in Remark \ref{rem-relationship} by employing careful analysis. 
      
  \item Since the initial energy $E_0$ could be large in our analysis, we can only get the smallness of $\displaystyle\int_0^{\sigma(T)}\norm{\nabla u}_{L^2}^2$ rather than $\displaystyle\int_0^T\norm{\nabla u}_{L^2}^2$ (see Lemma \ref{lem-essential-estimate}). By making delicate energy estimates, we can get the estimates of $A_1(T)$ and $A_2(T)$ stated in Lemma \ref{lem-A1-A2-control}:
      \begin{equation}\label{A1-A2-inequality1}\begin{aligned}
        A_2(T)&\leq C A_1(\sigma(T))+CA_1^{\frac{3}{2}}(T)+C(\tilde{\rho})A_1^3(E_0+1)\\
        &\qquad+\cdots+\int_0^T\sigma^3(\norm{\nabla u}_{L^4}^4+\norm{P-\bar{P}}_{L^4}^4),
     \end{aligned} \end{equation}
      \begin{equation}\label{A1-A2-inequality2}
        \begin{aligned}
        A_1(T)&\leq A_1(\sigma(T))+C\int_{\sigma(T)}^T\int\sigma(|P-\bar{P}||\nabla u|^2+|\nabla u|^3)+C(\tilde{\rho})\int_{\sigma(T)}^T\sigma\norm{\nabla u}_{L^2}^4\\
        &\leq A_1(\sigma(T))+C(\tilde{\rho})\left(\int_{\sigma(T)}^T\norm{\nabla u}_{L^2}^2\right)^{\frac{1}{2}}A_1^{\frac{3}{4}}(T)A_2^{\frac{1}{4}}(T)\\
        &\leq A_1(\sigma(T))+C(\tilde{\rho})E_0^{\frac{1}{2}}A_1^{\frac{3}{4}}(T)A_2^{\frac{1}{4}}(T),\quad(\textrm{see \eqref{A1-estimate-3}-\eqref{A1-estimate-5}})
        \end{aligned}
      \end{equation}
      which is new and very different from those in \cite{Cai-Li2023, Zhu2024}. Indeed, by virtue of the Navier-slip boundary conditions \eqref{Navier-slip-condition}, the term $A_1^{\frac{3}{2}}(T)$ in \eqref{A1-A2-inequality1} is caused by boundary term $\displaystyle\int_{\pa\Omega}\sigma^mu\cdot\nabla n\cdot uG$ (see \eqref{A-control-3}) which is inevitable.
      This term combined with the second estimate \eqref{A1-A2-inequality2} implies that the order of $A_2(T)$ should be higher than that of $A_1(T)$, but lower than that of $A_1^{\frac{3}{2}}(T)$. Based on this key observation, we specifically choose $A_2(T)\sim A_1(T)^{\frac{4}{3}}$ in Proposition \ref{prop-a-priori-estimate}. It is worth mentioning that this is different from \cite{Zhu2024} where $A_1(T)\sim A_2(T)^2$ is chosen.
      In addition, due to lack of the smallness of $E_0$, we have to derive new estimates on almost all boundary terms and intermediate terms appearing in the control of $A_2(T)$, such as $\displaystyle\int_{\pa\Omega}G(u\cdot\nabla u)\cdot\nabla n\cdot u$ in \eqref{boundary-term-A2-2} and $\norm{\nabla u}_{L^2}^2\norm{\nabla G}_{L^3}^2$ in \eqref{A2-estimate-transition1}, which is very different from \cite{Cai-Li2023}.
  \item The control of $\displaystyle\int_0^T\sigma^3(\norm{\nabla u}_{L^4}^4+\norm{P-\bar{P}}_{L^4}^4)$ appearing in \eqref{A1-A2-inequality1} is the most difficult part of this paper. By applying Lemma \ref{lem-curl-effective-viscous} or using the similar procedure to deal with $\displaystyle\int_0^T\sigma^3\norm{P-\bar{P}}_{L^4}^4$  as in Lemma \ref{lem-control-A1-A2} will yield the same trouble that
      \begin{equation}\label{trouble-estimate1}
        \int_0^T\sigma^3\norm{P-\bar{P}}_{L^4}^4\leq C\int_0^T\sigma^3\norm{\nabla u}_{L^2}^4+\hbox{good terms},
      \end{equation}
      \begin{equation}\label{trouble-estimate2}
        \int_0^T\sigma^3\norm{\nabla u}_{L^4}^4\leq C\int_0^T\sigma^3(\norm{\nabla u}_{L^2}^4+\norm{P-\bar{P}}_{L^4}^4)+\hbox{good terms},
      \end{equation}
      where the trouble term $\displaystyle\int_0^T\sigma^3\norm{\nabla u}_{L^2}^4$  in \eqref{trouble-estimate1} and \eqref{trouble-estimate2} is out of control due to lack of  smallness on  $\displaystyle\int_{\sigma(T)}^T\sigma^3\norm{\nabla u}_{L^2}^4$ (smaller than $A_2(T)$). 
      Noticing that for the Cauchy problem in \cite{Zhu2024}, the trouble term $\displaystyle\int_0^T\sigma^3\norm{\nabla u}_{L^2}^4$ in \eqref{trouble-estimate1} and \eqref{trouble-estimate2}
      exactly disappears. Then, by combining the relations \eqref{trouble-estimate1} and \eqref{trouble-estimate2}, the authors in \cite{Zhu2024} succeed in controlling 
      $\displaystyle\int_0^T\sigma^3\norm{P-\bar{P}}_{L^4}^4$ and hence $\displaystyle\int_0^T\sigma^3\norm{\nabla u}_{L^4}^4$.
      Therefore, we need employ some new thoughts to overcome this difficulty. 
      Fortunately, we observe that the term $\displaystyle\int_{\sigma(T)}^T\sigma^3\norm{\nabla u}_{L^2}^4$ is actually from $\curl u\times n|_{\pa\Omega}=-Au$ of Navier-slip boundary conditions \eqref{Navier-slip-condition}. If some smallness condition is imposed on the matrix $A$, then we can close the estimate of $\displaystyle\int_{\sigma(T)}^T\sigma^3\norm{\nabla u}_{L^4}^4$ and hence that of $\displaystyle\int_0^T\sigma^3\norm{P-\bar{P}}_{L^4}^4$. This part is discussed in Remark \ref{rem-small-A}, \eqref{P-L^4-estimate2}, \eqref{A2-estimate-2} and \eqref{A2-estimate-3}. It is worth mentioning that the smallness condition in \eqref{small-condition} imposed on the matrix $A$ is different from \cite{Zhu2024}, but it can be seen as a similar constraint on boundary as compared to the smallness assumption on far-field density in \cite{Zhu2024}. In particular, our results hold for the usual case that the matrix $A=0$. Additionally, we should remark that the smallness of $A$ only depends on $\mu,\lambda$ and $\Omega$, but independent of the density $\rho$, velocity $u$ and pressure $P$.
  \item The next important issue is to give the exponent decay of the classical solution obtained in Theorem \ref{thm-global-CNS}. Since we aim to describe the monotonicity of exponent decay rate with respect to the adiabatic exponent $\gamma$, the rough relation among $(\rho-\bar{\rho}),G(\rho)$ and $(P(\rho)-P(\bar{\rho}))(\rho-\bar{\rho})$ in \cite{Cai-Li2023} is not applicable for our case. To overcome this difficulty, by 
       employing several key observations, we give an explicit relationship among three terms above in Lemma \ref{lem-relationship}. With the crucial  Lemma \ref{lem-relationship} in hand,  we succeed in deriving the exponent decay rate for any $\gamma\in(1,\frac{3}{2}]$ in Proposition \ref{prop-energy-decay}.
\end{itemize}

The rest of the paper is organized as follows: In the next section, we introduce some elementary lemmas that will be needed later. In Section \ref{sect-proof-thm}, we give the proof of Theorem \ref{thm-global-CNS}.

\section{Preliminary}
This section mainly introduces some elementary lemmas used later. First, we give the local existence of strong solutions as follows. 

\begin{lem}\label{lem-local-sol}
  Let $\Omega$ be as in Theorem \ref{thm-global-CNS}, and assume that $(\rho_0,u_0)$ satisfies \eqref{initial-data1}-\eqref{compatibility-condition}. Then there exist a small $T>0$ and a unique strong solution $(\rho,u)$ to the problem \eqref{Large-CNS-eq}-\eqref{Navier-slip-condition} on $\Omega\times(0,T]$ satisfying for any $\tau\in(0,T)$,
  \begin{equation*}
    \begin{cases}
      (\rho,P)\in C([0,T];W^{2,q}),\\
     \nabla u\in C([0,T];H^1)\cap L^{\infty}(\tau,T;W^{2,q}),\\
     u_t\in L^{\infty}(\tau,T;H^2)\cap H^1(\tau,T;H^1),\\
     \sqrt{\rho}u_t\in L^{\infty}(0,\infty;L^2).
    \end{cases}
  \end{equation*}
\end{lem}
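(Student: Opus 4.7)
The plan is to prove Lemma \ref{lem-local-sol} by the classical linearization--iteration--limit scheme developed by Cho--Choe--Kim and Salvi--Str\"askraba \cite{Cho2004,Cho2006-1,Cho2006-2,Choe2003,Salvi1993} for CNS with vacuum, modified to accommodate the Navier-slip boundary conditions \eqref{Navier-slip-condition} as in \cite{Cai-Li2023}. To cope with the possible initial vacuum one first regularizes the initial density by setting $\rho_0^{\delta}:=\rho_0+\delta$ with $\delta>0$ so that the density stays strictly positive along the iteration; the final solution will be obtained by passing $\delta\to 0^{+}$ and exploiting the a priori bounds together with the compatibility condition \eqref{compatibility-condition}.

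Set $u^{0}\equiv 0$, and define $(\rho^{k+1},u^{k+1})_{k\ge 0}$ recursively. Given $u^{k}$, solve the linear transport equation $\pa_t\rho^{k+1}+\div(\rho^{k+1}u^{k})=0$ with $\rho^{k+1}(0)=\rho_{0}^{\delta}$ by characteristics; this preserves positivity and $W^{2,q}$ regularity as long as $u^{k}\in L^{1}_{t}W^{2,q}_{x}$. Then, with $\rho^{k+1}$ and $P(\rho^{k+1})$ known, solve the linear non-degenerate parabolic problem
\begin{equation*}
\rho^{k+1}\pa_t u^{k+1}+\rho^{k+1}(u^{k}\cdot\nabla)u^{k+1}-\mu\Delta u^{k+1}-(\mu+\lambda)\nabla\div u^{k+1}=-\nabla P(\rho^{k+1})
\end{equation*}
with $u^{k+1}(0)=u_{0}$ and the boundary conditions $u^{k+1}\cdot n=0$, $\curl u^{k+1}\times n=-Au^{k+1}$ on $\pa\Omega$. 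Standard Galerkin approximation, together with the $W^{2,q}$-regularity of the stationary Lam\'e operator under Navier-slip conditions established in \cite{Cai-Li2023}, produces a unique solution in the regularity class stated in the lemma.

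Next I would derive a priori estimates uniform in $k$ and $\delta$ on a short time interval $[0,T_{*}]$ whose length depends only on the norms of the initial data. The crucial ingredients are: the basic energy estimate; differentiating the momentum equation in $t$ and testing with $u^{k+1}_{t}$ to bound $\sqrt{\rho^{k+1}}u^{k+1}_{t}$ in $L^{\infty}_{t}L^{2}_{x}$ and $\nabla u^{k+1}_{t}$ in $L^{2}_{t}L^{2}_{x}$; and using the compatibility condition \eqref{compatibility-condition} to bound $\sqrt{\rho^{k+1}}u^{k+1}_{t}|_{t=0}$ in $L^{2}$. Elliptic regularity for the stationary Lam\'e system with Navier-slip boundary conditions then upgrades these bounds to $\norm{\nabla u^{k+1}}_{W^{2,q}}$ and $\norm{u^{k+1}_{t}}_{H^{2}}$ for $t>0$, and the transport equation upgrades them to $\norm{(\rho^{k+1},P^{k+1})}_{W^{2,q}}$. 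Using these uniform bounds, the sequence of differences $(\rho^{k+1}-\rho^{k},u^{k+1}-u^{k})$ is shown to be contractive in a weaker norm (say $L^{\infty}_{t}L^{2}_{x}$ for the velocity difference together with $L^{\infty}_{t}L^{2}_{x}$ for the density difference), yielding a strong limit $(\rho^{\delta},u^{\delta})$; passing $\delta\to 0^{+}$ with lower semicontinuity of the a priori estimates gives the desired local strong solution, and uniqueness follows from a standard energy estimate on the difference of two solutions, using that $u\in L^{1}_{t}W^{1,\infty}_{x}$ to handle the density transport.

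The main obstacle, compared with the Dirichlet case, is controlling the boundary contributions produced when integrating by parts in the $t$-differentiated momentum equation and in the elliptic regularity step. This is handled by using the identity $\norm{\nabla u}_{L^{2}}^{2}=\norm{\div u}_{L^{2}}^{2}+\norm{\curl u}_{L^{2}}^{2}+\text{(boundary terms)}$, absorbing the boundary contribution coming from $\curl u\times n=-Au$ via the positive semi-definiteness of $A$, and using Lemma-type elliptic estimates for the Lam\'e operator under Navier-slip conditions from \cite{Cai-Li2023}. All estimates and the iteration argument are completely standard once this boundary issue has been addressed, which is why the lemma is stated without further detail.
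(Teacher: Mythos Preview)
Your proposal is correct and follows the standard Cho--Choe--Kim / Salvi--Stra\v{s}kraba scheme that underlies all local existence results of this type with vacuum. The paper itself does not give any argument: its entire proof is the one-sentence statement ``This lemma can be deduced by combining the local existence result in \cite{Huang2021} and the initial-boundary-value problem under Navier boundary conditions without vacuum in \cite{Hoff2012}.'' In other words, the paper defers entirely to the literature, while you have written out a reasonable outline of what that literature actually contains (regularize the density to kill the vacuum, run the linear transport/parabolic iteration, close uniform estimates using the compatibility condition \eqref{compatibility-condition} and the Lam\'e elliptic regularity under slip conditions, contract in a weak norm, then let $\delta\to0$). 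The only cosmetic difference is the choice of citations: the paper points to Huang \cite{Huang2021} and Hoff \cite{Hoff2012} specifically, whereas you point to the earlier Cho--Kim papers and to \cite{Cai-Li2023}; but the underlying mechanism is identical, and your sketch is an accurate expansion of what those references do.
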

This lemma can be deduced by combining the local existence result in \cite{Huang2021} and the initial-boundary-value problem under Navier boundary conditions without vacuum in \cite{Hoff2012}.

Next, the well-known Poincar\'{e} inequality and Gagliardo-Nirenberg interpolation inequality will be used frequently later.

\begin{lem}\label{lem-Poincare-GN}
Assume that $\Omega$ is a bounded domain in $\mathbb{R}^3$ with Lipschitz boundary. Then\\ 
(1) For any $p\in(1,\infty)$, there exists a constant $C>0$ such that (see \cite{Berselli-Spirito2012})
  \begin{equation}\label{Poincare-inequality}
    \norm{f}_{L^p}\leq C\norm{\nabla f}_{L^p}, \textrm{ if $f\cdot n|_{\pa\Omega}=0$ or $f\times n|_{\pa\Omega}=0$ or if $\bar{f}=0$ and $\Omega$ is connected}.
  \end{equation}
(2) For $p\in[2,6]$, $q\in(1,\infty)$, and $r\in(3,\infty)$, there exist generic constants $C_i>0(i=1,\dots,4)$ which depend only on $p,q,r$ and $\Omega$ such that (see \cite{Nirenberg1959})
\begin{equation}\label{GN-inequality1}
  \norm{f}_{L^p}\leq C_1\norm{f}_{L^2}^{\frac{6-p}{2p}}\norm{\nabla f}_{L^2}^{\frac{3p-6}{2p}}+C_2\norm{f}_{L^2},
\end{equation}
\begin{equation}\label{GN-inequality2}
  \norm{g}_{C(\bar{\Omega})}\leq C_3\norm{g}_{L^q}^{\frac{q(r-3)}{3r+q(r-3)}}\norm{\nabla g}_{L^r}^{\frac{3r}{3r+q(r-3)}}+C_4\norm{g}_{L^2}.
\end{equation}
In particular, if either $f\cdot n|_{\pa\Omega}=0$ or $\bar{f}=0$ with $\Omega$ connected, we can set $C_2=0$. Similarly, the constant $C_4=0$ if $g\cdot n|_{\pa\Omega}=0$ or if $\bar{g}=0$ with $\Omega$ connected.
\end{lem}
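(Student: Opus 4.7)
The plan is to establish the Poincaré inequality in (1) via a standard Rellich-compactness contradiction argument, and the Gagliardo--Nirenberg inequalities in (2) via interpolation combined with Sobolev and Morrey embeddings in three dimensions.

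For (1), I would suppose no such constant $C$ exists and extract a sequence $\{f_k\}$ in $W^{1,p}$ with $\norm{f_k}_{L^p}=1$ but $\norm{\nabla f_k}_{L^p}\to 0$. Rellich--Kondrachov yields a subsequence converging in $L^p$ to some $f_\infty$ with $\nabla f_\infty\equiv 0$, hence $f_\infty\equiv c$ for some constant vector. The trace theorem transfers the boundary condition to the limit. In the case $\bar{f}=0$ one obtains $c=0$ immediately, contradicting $\norm{f_\infty}_{L^p}=1$. In the case $c\cdot n\equiv 0$ on $\pa\Omega$, since $\Omega$ is bounded the unit outer normal $n$ cannot be confined to a single plane, forcing $c=0$. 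The case $c\times n\equiv 0$ is similar: it means $c$ is parallel to $n$ at every boundary point, and continuity together with $|c|$ being constant forces $c=0$.

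For the first bound in (2), I would interpolate $L^p$ between $L^2$ and $L^6$ via H\"older, obtaining
\begin{equation*}
\norm{f}_{L^p}\leq\norm{f}_{L^2}^{(6-p)/(2p)}\norm{f}_{L^6}^{(3p-6)/(2p)},\qquad p\in[2,6],
\end{equation*}
and then invoke the 3D Sobolev embedding $\norm{f}_{L^6}\leq C(\norm{\nabla f}_{L^2}+\norm{f}_{L^2})$ followed by Young's inequality to recover \eqref{GN-inequality1}. The claim $C_2=0$ under a vanishing normal trace or a vanishing mean follows by applying part (1) to absorb the residual $\norm{f}_{L^2}$ term into $\norm{\nabla f}_{L^2}$ and collecting exponents, which sum to one. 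For \eqref{GN-inequality2}, I would use the Morrey embedding $W^{1,r}(\Omega)\hookrightarrow C(\overline{\Omega})$ valid for $r>3$, combined with a scaling-based interpolation between $L^q$ and $W^{1,r}$ to produce the stated exponents $\frac{q(r-3)}{3r+q(r-3)}$ and $\frac{3r}{3r+q(r-3)}$; the case $C_4=0$ again follows from the Poincar\'e inequality in (1).

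The main obstacle I anticipate is the tangential boundary condition $f\times n=0$ in part (1), which requires excluding nontrivial constant vectors $c$ that are parallel to $n$ at every boundary point. For a simply connected smooth bounded domain this is straightforward, but since Theorem \ref{thm-global-CNS} permits $\pa\Omega$ to have finitely many connected components, one must verify that on each component the normal direction varies sufficiently. Because the authors cite \cite{Berselli-Spirito2012} precisely for this statement, the cleanest route in practice is to invoke that reference rather than redo the argument from scratch.
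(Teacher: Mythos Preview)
The paper does not actually prove this lemma: it is stated as a preliminary result with direct citations to \cite{Berselli-Spirito2012} for part (1) and \cite{Nirenberg1959} for part (2), and no proof environment follows. Your sketch therefore goes beyond what the paper offers. The compactness/contradiction argument you outline for the Poincar\'e inequality is the standard one and is correct; in particular your exclusion of nonzero constant limits under the conditions $c\cdot n=0$ or $c\times n=0$ works on any bounded Lipschitz domain (for the former, maximize $x\mapsto c\cdot x$ over $\overline{\Omega}$; for the latter, maximize $x\mapsto v\cdot x$ for any $v\perp c$ to find a boundary point where $n$ is not parallel to $c$). Your derivation of \eqref{GN-inequality1} via H\"older interpolation between $L^2$ and $L^6$ followed by the Sobolev embedding $H^1\hookrightarrow L^6$, and of \eqref{GN-inequality2} via Morrey's embedding together with scaling, is likewise the standard route and yields exactly the stated exponents. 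In short, your proposal is correct and more detailed than the paper, which simply defers to the literature; your closing remark that one would in practice cite \cite{Berselli-Spirito2012} is precisely what the authors do.
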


The following Zlotnik's inequality is introduced to get the upper bound of the density $\rho$.

\begin{lem}[see \cite{Zlotnik2000}]\label{lem-Zlotnik-inequality}
  Suppose the function $y$ satisfies that
  \begin{equation*}
    y'(t)=g(y)+b'(t),\, t\in[0,T], \quad y(0)=y_0,
  \end{equation*}
  with $g\in C(\mathbb{R})$ and $y,b\in W^{1,1}(0,T)$. If $g(\infty)=-\infty$ and
  \begin{equation}\label{Zlotnik-condition1}
    b(t_2)-b(t_1)\leq N_0+N_1(t_2-t_1)
  \end{equation}
  for all $0\leq t_1<t_2\leq T$ with some $N_0\geq 0$ and $N_1\geq 0$, then
  \begin{equation*}
    y(t)\leq\max\{y_0,\zeta_0\}+N_0<\infty \textrm{ on }[0,T],
  \end{equation*}
  where $\zeta_0$ is a constant such that
  \begin{equation}\label{Zlotnik-condition2}
    g(\zeta)\leq -N_1 \textrm{ for }\zeta\geq\zeta_0.
  \end{equation}
\end{lem}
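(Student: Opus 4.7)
The plan is to argue by tracking the last time the trajectory $y$ crosses the threshold $\zeta_0$ and then integrating the ODE on that final segment. Fix $t \in [0,T]$. Since $y, b \in W^{1,1}(0,T)$ are absolutely continuous, $y$ is continuous on $[0,T]$ and we may integrate $y' = g(y) + b'$ over any subinterval in the a.e.\ sense.

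I would then split into two cases. First, if $y(s) \ge \zeta_0$ for every $s \in [0,t]$, then in particular $y_0 \ge \zeta_0$, so $\max\{y_0,\zeta_0\} = y_0$; moreover \eqref{Zlotnik-condition2} gives $g(y(s)) \le -N_1$ throughout $[0,t]$, and integrating from $0$ to $t$ together with \eqref{Zlotnik-condition1} (applied with $t_1=0$, $t_2=t$) yields
\[
y(t) - y_0 \;\le\; \int_0^t g(y(s))\,ds + b(t)-b(0) \;\le\; -N_1 t + (N_0 + N_1 t) \;=\; N_0,
\]
hence $y(t) \le y_0 + N_0 = \max\{y_0,\zeta_0\} + N_0$. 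Otherwise there exists $s \in [0,t]$ with $y(s) < \zeta_0$, and I set $t^* := \sup\{s \in [0,t] : y(s) < \zeta_0\}$. If $t^* = t$, continuity gives $y(t) \le \zeta_0$ directly. If $t^* < t$, continuity forces $y(t^*) = \zeta_0$ and $y(s) \ge \zeta_0$ on $[t^*,t]$, so $g(y(s)) \le -N_1$ on this subinterval; integrating from $t^*$ to $t$ and invoking \eqref{Zlotnik-condition1} with $t_1 = t^*$, $t_2=t$ produces
\[
y(t) - \zeta_0 \;\le\; -N_1(t - t^*) + b(t) - b(t^*) \;\le\; N_0,
\]
so $y(t) \le \zeta_0 + N_0 \le \max\{y_0,\zeta_0\} + N_0$. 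In every case the claimed bound holds on $[0,T]$.

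There is no genuine analytic obstacle here: the entire argument is a one-line ODE comparison on the correct subinterval. The only subtlety is the identification of the last $\zeta_0$-crossing $t^*$, which requires continuity of $y$ and hence the hypothesis $y \in W^{1,1}(0,T)$. The hypothesis $g(\infty)=-\infty$ enters only to guarantee that a threshold $\zeta_0$ satisfying \eqref{Zlotnik-condition2} exists for the prescribed $N_1$; once $\zeta_0$ is fixed, the proof is self-contained and depends on the data $(g, y_0, N_0, N_1)$ only through the constants appearing in the final bound.
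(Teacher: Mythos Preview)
Your proof is correct. The paper does not actually prove this lemma; it merely states it as a preliminary tool and cites Zlotnik's original paper \cite{Zlotnik2000}. Your direct argument---tracking the last crossing time $t^*$ of the threshold $\zeta_0$ and integrating the ODE on $[t^*,t]$---is the standard way to establish this comparison principle, and the case analysis and use of continuity are all sound.
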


Next, we consider the Lam\'{e}'s system
\begin{equation}\label{Lame-system}
  \begin{cases}
     -\mu\Delta u-(\lambda+\mu)\nabla\div u=f,\text{ in }\Omega,\\
     u\cdot n=0,\, \curl u\times n=-Au,\text{ on }\pa\Omega.
  \end{cases}
\end{equation}
This system is the strongly elliptic equations, and thus the standard elliptic estimates hold as follows.

\begin{lem}[see \cite{Agmon-Douglis-Nirenberg1964}]\label{lem-Lame-estimate}
Let $u$ be a smooth solution of the Lam\'{e}'s system \eqref{Lame-system}. Then for $p\in(1,\infty)$ and integer $k\geq0$, there exists a constant $C>0$ depending only on $\lambda,\mu,p,k,\Omega$ and the matrix $A$ such that
\begin{itemize}
  \item If $f\in W^{k,p}$, then
  \begin{equation}\label{Lame-estimate1}
    \norm{u}_{W^{k+2,p}}\leq C(\norm{f}_{W^{k,p}}+\norm{u}_{L^p});
  \end{equation}
  \item If $f=\nabla g$ and $g\in W^{k,p}$, then
  \begin{equation}\label{Lame-estimate2}
    \norm{u}_{W^{k+1,p}}\leq C(\norm{g}_{W^{k,p}}+\norm{u}_{L^p}).
  \end{equation}
\end{itemize} 
\end{lem}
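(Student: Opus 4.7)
The plan is to apply the Agmon-Douglis-Nirenberg elliptic regularity theory to the Lam\'e system, treating the Robin-type term $Au$ on the boundary as a lower-order perturbation. The argument breaks into three ingredients: ellipticity, the complementing (Lopatinskii-Shapiro) condition, and the gradient-structure refinement.

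First, I would verify strong ellipticity of the operator $Lu:=-\mu\Delta u-(\mu+\lambda)\nabla\div u$. Its principal symbol is $\mu|\xi|^2 I+(\mu+\lambda)\xi\otimes\xi$, whose eigenvalues $\mu|\xi|^2$ (double) and $(2\mu+\lambda)|\xi|^2$ are strictly positive since \eqref{viscosity-condition} gives $2\mu+\lambda\geq 4\mu/3>0$. Next, I would check the Lopatinskii-Shapiro complementing condition for the principal boundary operators $(u\cdot n,\,\curl u\times n)$. After flattening $\pa\Omega$ near a point, freezing coefficients, and passing to tangential frequencies $\xi'\neq 0$, this reduces to showing that the only solution of $Lu=0$ on the half-line $x_3>0$ that decays at infinity and satisfies $u_3=0$ and $(\curl u\times e_3)_j=0$ ($j=1,2$) at $x_3=0$ is $u\equiv 0$. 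This is an explicit ODE calculation: the space of decaying solutions is three-dimensional, parametrized by modes with exponential decay rates determined by $|\xi'|$ and by $|\xi'|\sqrt{\mu/(2\mu+\lambda)}$, and the three principal boundary conditions form an invertible linear system on that space. The zeroth-order term $Au$ is irrelevant at this stage.

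With ellipticity and the complementing condition established, the ADN theorem furnishes the estimate
\begin{equation*}
\norm{u}_{W^{k+2,p}}\leq C\bigl(\norm{f}_{W^{k,p}}+\norm{Au}_{W^{k+1-1/p,p}(\pa\Omega)}+\norm{u}_{L^p}\bigr).
\end{equation*}
The boundary term is bounded via the trace theorem by $C\norm{A}_{C^{k+1}(\pa\Omega)}\norm{u}_{W^{k+1,p}}$, and the intermediate Sobolev norm is absorbed using the standard interpolation $\norm{u}_{W^{k+1,p}}\leq\varepsilon\norm{u}_{W^{k+2,p}}+C_\varepsilon\norm{u}_{L^p}$, yielding \eqref{Lame-estimate1}.

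For \eqref{Lame-estimate2} with $f=\nabla g$, I would exploit the gradient structure by rewriting $Lu=\mu\curl\curl u-(2\mu+\lambda)\nabla\div u=\nabla g$. Taking divergence gives $-(2\mu+\lambda)\Delta\div u=\Delta g$, so $\div u+g/(2\mu+\lambda)$ is harmonic; taking curl gives $-\mu\Delta\curl u=0$, so $\omega:=\curl u$ is harmonic. Combined with the identity $-\Delta u=\curl\omega-\nabla\div u$ and the boundary conditions, a duality/Ne\v{c}as-type argument transfers one derivative from the right-hand side to the left; equivalently, one may invoke ADN directly in the form valid for systems whose right-hand side is of divergence type, so that the natural norm on $f=\nabla g$ is $\norm{g}_{W^{k,p}}$ rather than $\norm{\nabla g}_{W^{k,p}}$. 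The main obstacle I anticipate is the careful verification of the complementing condition for the particular combination of the normal Dirichlet condition $u\cdot n=0$ with the tangential vorticity condition $\curl u\times n=-Au$; once this explicit ODE computation is carried out, the rest is a routine application of established elliptic machinery together with the interpolation absorption described above.
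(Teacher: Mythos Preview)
The paper does not supply its own proof of this lemma; it is stated with the citation ``see \cite{Agmon-Douglis-Nirenberg1964}'' and treated as a known consequence of the Agmon--Douglis--Nirenberg theory for strongly elliptic systems with complementing boundary conditions. Your proposal correctly reconstructs that underlying argument: verifying strong ellipticity of the Lam\'e operator, checking the Lopatinskii--Shapiro condition for the principal boundary operators $(u\cdot n,\ \curl u\times n)$, applying the ADN Schauder/$L^p$ estimates, and absorbing the zeroth-order boundary term $Au$ via trace and interpolation. This is exactly the machinery the citation points to, so there is nothing to compare---you have simply written out what the paper leaves to the reference.

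One minor remark on the second estimate: your digression into taking divergence and curl to obtain harmonic auxiliary quantities is unnecessary and somewhat loose (it does not by itself yield the $W^{k+1,p}$ bound without further boundary analysis). The cleaner route, which you also mention, is to invoke the ADN estimates directly in the form applicable to right-hand sides of negative order (equivalently, divergence form $f=\nabla g$), where the natural data norm is $\norm{g}_{W^{k,p}}$; this is already part of the ADN framework and requires no separate PDE manipulation.
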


Next, the following two Hodge-type decompositions are given in \cite{Aramaki2014,von-Wahl1992}.

\begin{lem}\label{lem-Hodge-decomposition}
  Let integer $k\geq 0$ and $p\in(1,\infty)$, and assume that $\Omega$ is a simply connected bounded domain in $\mathbb{R}^3$ with $C^{k+1,1}$ boundary $\pa\Omega$. Then there exists a constant $C=C(p,k,\Omega)>0$ such that
  \begin{itemize}
    \item If $v\in W^{k+1,p}$ with $v\cdot n|_{\pa\Omega}=0$,
    \begin{equation}\label{Hodge-decomposition1}
      \norm{v}_{W^{k+1,q}}\leq C(\norm{\div v}_{W^{k,p}}+\norm{\curl v}_{W^{k,p}}).
    \end{equation}
    In particular, for $k=0$, we have
    \begin{equation*}
      \norm{\nabla v}_{L^p}\leq C(\norm{\div v}_{L^p}+\norm{\curl v}_{L^p}).
    \end{equation*}
    \item If the boundary $\pa\Omega$ only has a finite number of 2-dimensional connected components and $v\in W^{k+1,p}$ with $v\times n|_{\pa\Omega}=0$, then
        \begin{equation}\label{Hodge-decomposition2}
      \norm{v}_{W^{k+1,q}}\leq C(\norm{\div v}_{W^{k,p}}+\norm{\curl v}_{W^{k,p}}+\norm{v}_{L^p}).
    \end{equation}
    In particular, if $\Omega$ has no holes, then
    \begin{equation}\label{Hodge-decomposition3}
      \norm{v}_{W^{k+1,q}}\leq C(\norm{\div v}_{W^{k,p}}+\norm{\curl v}_{W^{k,p}}).
    \end{equation}
  \end{itemize} 
\end{lem}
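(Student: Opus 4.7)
The plan is to reduce both Hodge-type inequalities to standard $L^p$ elliptic regularity via the vector identity
$$-\Delta v = \curl(\curl v) - \nabla(\div v),$$
combined with a Helmholtz-type splitting of $v$ that is adapted to its boundary trace, followed by a bootstrap in the order of regularity $k$.

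For the first inequality, with $v\cdot n=0$ on $\pa\Omega$, I would decompose $v=\nabla\phi+w$, where $\phi$ solves the Neumann problem $\Delta\phi=\div v$ in $\Omega$, $\pa_n\phi=0$ on $\pa\Omega$. The solvability condition $\int_{\Omega}\div v\,dx=\int_{\pa\Omega}v\cdot n\,dS=0$ holds automatically, and standard $L^p$ Neumann regularity gives $\norm{\nabla\phi}_{W^{k+1,p}}\leq C\norm{\div v}_{W^{k,p}}$. The residue $w=v-\nabla\phi$ then satisfies $\div w=0$, $w\cdot n=0$, and $\curl w=\curl v$. Using simple connectedness of $\Omega$, I would represent $w=\curl\psi$ with $\div\psi=0$ and $\psi\times n=0$ on $\pa\Omega$; applying $\curl$ yields the vector Poisson problem $-\Delta\psi=\curl v$ with coercive boundary conditions, and $L^p$ elliptic theory (Agmon--Douglis--Nirenberg) gives $\norm{\psi}_{W^{k+2,p}}\leq C\norm{\curl v}_{W^{k,p}}$, hence $\norm{w}_{W^{k+1,p}}\leq C\norm{\curl v}_{W^{k,p}}$. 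Summing the two bounds yields \eqref{Hodge-decomposition1}.

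The second inequality, with $v\times n=0$ on $\pa\Omega$, is handled by the analogous splitting with the boundary roles essentially swapped: $\phi$ now solves a Dirichlet-type problem on each boundary component and the vector potential $\psi$ carries a normal-type condition. The obstruction is that when $\pa\Omega$ has several 2-dimensional connected components, the subspace of harmonic fields satisfying $v\times n=0$ is finite-dimensional (its dimension is related to the number of ``holes''), and it must be projected out; this projection produces the unavoidable correction $\norm{v}_{L^p}$ in \eqref{Hodge-decomposition2}. When $\Omega$ has no holes this harmonic subspace is trivial and \eqref{Hodge-decomposition3} follows with no extra term.

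The main obstacle I anticipate is the vector-potential step: the existence of a suitable $\psi$ and the derivation of the elliptic estimate for it depend on the topology of $\Omega$ and $\pa\Omega$, together with a careful verification of the Agmon--Douglis--Nirenberg complementing boundary conditions, and tracking the dependence on the number of connected components of $\pa\Omega$ for the second case is delicate. Since both decompositions are classical, the cited references \cite{Aramaki2014, von-Wahl1992} contain the full topologically-sensitive arguments, and I would appeal to them for the fine points rather than reproduce those steps in full.
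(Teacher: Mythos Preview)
The paper does not prove this lemma at all: it is stated as a known result and simply attributed to the references \cite{Aramaki2014,von-Wahl1992}. Your sketch is a reasonable outline of the classical argument behind those references---Helmholtz splitting adapted to the boundary condition, a vector potential produced via simple connectedness, and $L^p$ elliptic regularity for the resulting Poisson problems---and you yourself end by deferring to \cite{Aramaki2014,von-Wahl1992} for the delicate topological and complementing-condition steps. So in substance your proposal and the paper agree: both treat the inequality as classical and cite the same sources; you merely add a heuristic outline the paper omits.

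One minor caution if you intend to fill in the sketch rather than cite: the step ``represent $w=\curl\psi$ with $\div\psi=0$ and $\psi\times n=0$'' is exactly the nontrivial part, and its justification is essentially equivalent to the second bullet of the lemma (the tangential-trace estimate). Deriving the first bullet by invoking a vector potential whose estimate relies on the second bullet risks circularity unless you establish the existence and $W^{k+2,p}$ bound for $\psi$ independently (e.g., via the Biot--Savart construction in \cite{von-Wahl1992} or the direct div--curl system analysis in \cite{Aramaki2014}). This is precisely why the cited works treat the two boundary cases in parallel rather than reducing one to the other.
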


Next, we introduce the Bogovskii operator $\mathcal{B}$ which solves the following problem
\begin{equation}\label{div-equation}
  \begin{cases}
    \div\mathcal{B}[f]=f,\text{ in }\Omega, \\
    \mathcal{B}[f]=0,\text{ on }\pa\Omega.
  \end{cases}
\end{equation}
Then, we have the following conclusion.
\begin{lem}[see \cite{Galdi2011}]\label{lem-Bogovskii-operator}
 There exists a linear operator $\mathcal{B}(f): \{f\in L^p: \bar{f}=0\}\longrightarrow\mathbb{R}^3$ solving the problem \eqref{div-equation} and satisfying that for any $p\in(1,\infty)$,
 \begin{equation}\label{Bogovskii-estimate1}
   \norm{\mathcal{B}(f)}_{W_0^{1,p}(\Omega)}\leq C(p)\norm{f}_{L^p(\Omega)}.
 \end{equation}
 In particular, if $f=\div g$ with $g\in L^r(\Omega)$ and $g\cdot n|_{\Omega}=0$, then
 \begin{equation}\label{Bogovskii-estimate2}
   \norm{\mathcal{B}(f)}_{L^r(\Omega)}\leq C(r)\norm{g}_{L^r(\Omega)},\,\forall r\in(1,\infty).
 \end{equation}
\end{lem}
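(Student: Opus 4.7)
The plan is to prove Lemma \ref{lem-Bogovskii-operator} by first constructing $\mathcal{B}$ explicitly on domains that are star-shaped with respect to a ball, then reducing a general bounded Lipschitz domain $\Omega$ to that case by decomposition. Concretely, since $\Omega$ is bounded with Lipschitz boundary, there is a finite cover $\Omega=\bigcup_{j=1}^N\Omega_j$ where each $\Omega_j$ is star-shaped with respect to some ball $B_j\Subset\Omega_j$. Given $f\in L^p$ with $\bar f=0$, one splits $f=\sum_j f_j$ with $\operatorname{supp} f_j\subset\Omega_j$ and $\int_{\Omega_j}f_j=0$ (this requires redistributing the means via a chain of auxiliary functions supported in the overlaps, a standard and essentially algebraic construction). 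Setting $\mathcal{B}[f]:=\sum_j\mathcal{B}_j[f_j]$ extended by zero, it suffices to treat each star-shaped piece.

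On a star-shaped domain $\Omega_j$, following Bogovskii, I fix $\omega\in C_c^\infty(B_j)$ with $\int\omega=1$ and define
\begin{equation*}
\mathcal{B}_j[f_j](x)=\int_{\Omega_j} K(x,y)\,f_j(y)\,dy,
\end{equation*}
where the kernel has the explicit form
\begin{equation*}
K(x,y)=\frac{x-y}{|x-y|^{3}}\int_{0}^{\infty}\omega\!\left(y+s\,\frac{x-y}{|x-y|}\right)\!\bigl(s+|x-y|\bigr)^{2}\,ds.
\end{equation*}
A direct calculation using $\bar f_j=0$ yields $\operatorname{div}\mathcal{B}_j[f_j]=f_j$ pointwise, while the star-shapedness forces the integrand to vanish once $y+s(x-y)/|x-y|$ leaves $\Omega_j$, giving $\mathcal{B}_j[f_j]=0$ on $\partial\Omega_j$.

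For the $W^{1,p}$ bound, one differentiates under the integral sign; after careful manipulation the kernel of $\partial_{x_i}\mathcal{B}_j[f_j]$ decomposes into a weakly singular piece plus a Calder\'on--Zygmund singular kernel of convolution type (modulo lower-order smooth remainders supported in $\Omega_j$). Standard Calder\'on--Zygmund theory then yields $\|\nabla \mathcal{B}_j[f_j]\|_{L^p}\le C(p)\|f_j\|_{L^p}$ for $1<p<\infty$; combined with Poincar\'e (allowed since $\mathcal{B}_j[f_j]$ has zero boundary trace) one obtains \eqref{Bogovskii-estimate1}. For \eqref{Bogovskii-estimate2}, when $f=\operatorname{div} g$ with $g\cdot n|_{\partial\Omega}=0$, I integrate by parts in $y$ inside the kernel representation,
\begin{equation*}
\mathcal{B}[\operatorname{div} g](x)=-\int_{\Omega}\nabla_y K(x,y)\cdot g(y)\,dy,
\end{equation*}
where the boundary contribution drops out thanks to $g\cdot n=0$. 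The resulting operator is again of Calder\'on--Zygmund type in $g$, so $\|\mathcal{B}[\operatorname{div} g]\|_{L^r}\le C(r)\|g\|_{L^r}$.

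The main technical obstacle is verifying the Calder\'on--Zygmund conditions (correct size, smoothness, and cancellation) for the kernels obtained after differentiating $K(x,y)$ and, separately, for $\nabla_y K(x,y)$ arising in the $f=\operatorname{div} g$ estimate; the principal-value structure near $x=y$ has to be extracted cleanly before invoking the $L^p$ boundedness theorem. The partition-of-unity reduction to star-shaped pieces and the correction of local means are essentially bookkeeping once the star-shaped case is established.
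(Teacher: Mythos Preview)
The paper does not prove this lemma at all: it is stated with the attribution ``[see \cite{Galdi2011}]'' and no proof is given, the result being quoted as a known tool. Your sketch is precisely the classical Bogovskii construction carried out in that reference (explicit kernel on star-shaped domains, partition-of-unity reduction, Calder\'on--Zygmund bounds, and the duality/integration-by-parts argument for the negative-order estimate), so your proposal is correct and in fact reproduces the cited source rather than offering an alternative route.
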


Next, we write $\eqref{Large-CNS-eq}_2$ as
\begin{equation}\label{momentum-equation}
  \rho\dot{u}=\nabla G-\mu\nabla\times\curl u
\end{equation}
with 
\begin{equation*}
  \curl u=\nabla\times u,\, G=(2\mu+\lambda)\div u-(P-\bar{P}),\, \dot{f}:=f_t+u\cdot\nabla f,
\end{equation*}
where the vorticity $\curl u$ and the effective viscous flux $G$ both play an important role in the following analysis. Here, we give the following key a priori estimates on $\curl u$ and $G$ which will be used frequently.

\begin{lem}\label{lem-curl-effective-viscous}
  Assume $\Omega$ is a simply connected bounded domain in $\mathbb{R}^3$ and its smooth boundary $\pa\Omega$ only has a finite number of 2D connected components. Let $(\rho,u)$ be a smooth solution of \eqref{Large-CNS-eq} with Navier-slip boundary conditions \eqref{Navier-slip-condition}. Then for any $p\in[2,6]$ and $q\in(1,\infty)$, there exist constants $C_1,\, C>0$ depending only on $p,q,\mu,\lambda$ and $\Omega$ (with $C$ depending on $A$ as well) such that
  \begin{equation}\label{curl-effective-viscous-estimate1}
    \norm{\nabla u}_{L^q}\leq C_1(\norm{\div u}_{L^q}+\norm{\curl u}_{L^q}),
  \end{equation}
  \begin{equation}\label{curl-effective-viscous-estimate2}
    \norm{\nabla G}_{L^p}+\norm{\nabla\curl u}_{L^p}\leq C(\norm{\rho\dot{u}}_{L^p}+\norm{\nabla u}_{L^2}+\norm{P-\bar{P}}_{L^p}),
  \end{equation}
  \begin{equation}\label{curl-effective-viscous-estimate3}
    \norm{G}_{L^p}\leq C\norm{\rho\dot{u}}_{L^2}^{\frac{3p-6}{2p}}(\norm{\nabla u}_{L^2}+\norm{P-\bar{P}}_{L^2})^{\frac{6-p}{2p}}+C(\norm{\nabla u}_{L^2}+\norm{P-\bar{P}}_{L^2}),
  \end{equation}
  \begin{equation}\label{curl-effective-viscous-estimate4}
    \norm{\curl u}_{L^p}\leq C\norm{\rho\dot{u}}_{L^2}^{\frac{3p-6}{2p}}\norm{\nabla u}_{L^2}^{\frac{6-p}{2p}}+C\norm{\nabla u}_{L^2}.
  \end{equation}
  Moreover,
  \begin{equation}\label{curl-effective-viscous-estimate5}
    \norm{G}_{L^p}+\norm{\curl u}_{L^p}\leq C(\norm{\rho\dot{u}}_{L^2}+\norm{\nabla u}_{L^2}),
  \end{equation}
  \begin{equation}\label{curl-effective-viscous-estimate6}
    \norm{\nabla u}_{L^p}\leq C\norm{\rho\dot{u}}_{L^2}^{\frac{3p-6}{2p}}(\norm{\nabla u}_{L^2}+\norm{P-\bar{P}}_{L^2})^{\frac{6-p}{2p}}+C(\norm{\nabla u}_{L^2}+\norm{P-\bar{P}}_{L^p}).
  \end{equation}
  In particular, for $p=2$, the term $\norm{P-\bar{P}}_{L^p}$ can be removed in \eqref{curl-effective-viscous-estimate2}.  
\end{lem}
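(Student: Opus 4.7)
The plan is to treat \eqref{curl-effective-viscous-estimate1} separately as an immediate consequence of the first part of Lemma \ref{lem-Hodge-decomposition} applied to $u$, since $u\cdot n=0$ on $\pa\Omega$. All the remaining bounds \eqref{curl-effective-viscous-estimate3}--\eqref{curl-effective-viscous-estimate6} will then be reduced, via Gagliardo-Nirenberg interpolation (Lemma \ref{lem-Poincare-GN}), to the central gradient estimate \eqref{curl-effective-viscous-estimate2}. Hence my main effort will be on \eqref{curl-effective-viscous-estimate2}, whose proof I would split into: (a) deriving Poisson-type equations for $G$ and $\curl u$ from the decomposition \eqref{momentum-equation}; (b) reading off the correct boundary conditions from the Navier-slip condition via surface calculus; (c) applying $L^p$-elliptic regularity (Lemma \ref{lem-Lame-estimate}) together with Lemma \ref{lem-Hodge-decomposition}.

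For step (a), taking $\div$ of \eqref{momentum-equation} gives $\Delta G=\div(\rho\dot u)$, and taking $\curl$ together with $\div\curl u=0$ and $\nabla\times(\nabla\times v)=\nabla\div v-\Delta v$ gives $\mu\Delta\curl u=\curl(\rho\dot u)$. For step (b), I would restrict \eqref{momentum-equation} to $\pa\Omega$ and project onto $n$, using that $u\cdot n=0$ implies $u_t\cdot n=0$ and $(u\cdot\nabla u)\cdot n=-u\cdot\nabla n\cdot u$, together with the surface identity $(\nabla\times w)\cdot n=\mathrm{div}_{\Gamma}(w\times n)$ applied to $w=\curl u$ (which by $\curl u\times n=-Au$ becomes $-\mathrm{div}_{\Gamma}(Au)$), to extract the Neumann condition
\begin{equation*}
\pa_n G=-\rho u\cdot\nabla n\cdot u-\mu\,\mathrm{div}_{\Gamma}(Au)\quad\text{on }\pa\Omega,
\end{equation*}
while for $\curl u$ the tangential trace $\curl u\times n=-Au$ supplies the boundary data directly. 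Step (c) is then standard: $L^p$-elliptic regularity applied to the Neumann problem for $G$ and the mixed problem for $\curl u$ (the latter via Lemma \ref{lem-Hodge-decomposition}, which converts bounds on $\div\curl u=0$ and $\curl\curl u$ back into $W^{1,p}$-bounds for $\curl u$) should produce \eqref{curl-effective-viscous-estimate2}. The $\norm{\nabla u}_{L^2}$ term on the right-hand side arises from the boundary contributions $u\cdot\nabla n\cdot u$ and $Au$ (via trace and Sobolev embedding), and the $\norm{P-\bar P}_{L^p}$ term drops out when $p=2$ because, as noted below, $\bar G=0$.

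To derive \eqref{curl-effective-viscous-estimate3}--\eqref{curl-effective-viscous-estimate6} from \eqref{curl-effective-viscous-estimate2}, I would first observe that $\int G=(2\mu+\lambda)\int\div u-\int(P-\bar P)=(2\mu+\lambda)\int_{\pa\Omega}u\cdot n=0$, so Poincar\'e's inequality (Lemma \ref{lem-Poincare-GN}(1)) yields $\norm{G}_{L^2}\leq C\norm{\nabla G}_{L^2}\leq C(\norm{\rho\dot u}_{L^2}+\norm{\nabla u}_{L^2})$ by \eqref{curl-effective-viscous-estimate2} at $p=2$. Then apply \eqref{GN-inequality1} with the exponent pair $\bigl(\tfrac{6-p}{2p},\tfrac{3p-6}{2p}\bigr)$ and insert the bounds on $\norm{G}_{L^2}$ and $\norm{\nabla G}_{L^p}$; this gives \eqref{curl-effective-viscous-estimate3}. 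Replacing $G$ by $\curl u$ (and using $\norm{\curl u}_{L^2}\leq C\norm{\nabla u}_{L^2}$ in place of the $\bar G=0$ Poincar\'e step) gives \eqref{curl-effective-viscous-estimate4}; Young's inequality, or specialization to $p=6$, then yields \eqref{curl-effective-viscous-estimate5}. Finally, I would obtain \eqref{curl-effective-viscous-estimate6} by invoking \eqref{curl-effective-viscous-estimate1}, writing $\div u=(2\mu+\lambda)^{-1}(G+P-\bar P)$ so that $\norm{\div u}_{L^p}\leq C(\norm{G}_{L^p}+\norm{P-\bar P}_{L^p})$, and combining with \eqref{curl-effective-viscous-estimate3}--\eqref{curl-effective-viscous-estimate4}.

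The main obstacle in this program is step (c), namely extracting sharp $L^p$-bounds for $\nabla G$ and $\nabla\curl u$ under the Navier-slip condition on a curved boundary. The key technical observation that makes it tractable is the surface-divergence identity which converts the \emph{normal} component of $\nabla\times\curl u$ on $\pa\Omega$ into the \emph{tangential} divergence of $Au$; this turns an otherwise unmanageable normal derivative into an object controllable by $\norm{A}_{W^{1,\infty}}\norm{u}_{H^{1/2}(\pa\Omega)}$, and hence, via the trace theorem, by $\norm{\nabla u}_{L^2}$. This same identity is the reason the constant $C$ in \eqref{curl-effective-viscous-estimate2}--\eqref{curl-effective-viscous-estimate6} is permitted to depend on $A$, while $C_1$ in \eqref{curl-effective-viscous-estimate1} is not.
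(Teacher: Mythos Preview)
Your overall architecture matches the paper's: \eqref{curl-effective-viscous-estimate1} from Lemma~\ref{lem-Hodge-decomposition}, then the key gradient bounds on $G$ and $\curl u$, and finally \eqref{curl-effective-viscous-estimate3}--\eqref{curl-effective-viscous-estimate6} via $\bar G=0$, Gagliardo--Nirenberg, and $(2\mu+\lambda)\div u=G+(P-\bar P)$. The difference is in how the slip condition is handled for the gradient bounds. Rather than computing $\partial_n G$ by surface calculus and working in boundary trace spaces, the paper extends $n$ smoothly to $\bar\Omega$, sets $(Au)^{\bot}:=n\times(Au)$ as a \emph{volume} field, and observes that $(\curl u+(Au)^{\bot})\times n=0$ on $\pa\Omega$. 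Testing \eqref{momentum-equation} against $\nabla\eta$ then gives the weak form $\int\nabla G\cdot\nabla\eta=\int(\rho\dot u-\mu\nabla\times(Au)^{\bot})\cdot\nabla\eta$, so the elliptic estimate reads off directly as $\norm{\nabla G}_{L^q}\le C(\norm{\rho\dot u}_{L^q}+\norm{\nabla u}_{L^q})$ with no fractional boundary-space bookkeeping; likewise Lemma~\ref{lem-Hodge-decomposition} applies to $\curl u+(Au)^{\bot}$ (which now has homogeneous tangential trace), yielding the same bound for $\nabla\curl u$. Note that your plan to apply Lemma~\ref{lem-Hodge-decomposition} to $\curl u$ itself is a gap: its tangential trace is $-Au\neq0$, so you must homogenize first, and $(Au)^{\bot}$ is exactly that homogenizer. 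The final form \eqref{curl-effective-viscous-estimate2}, with $\norm{\nabla u}_{L^2}+\norm{P-\bar P}_{L^p}$ on the right, is then obtained by feeding \eqref{curl-effective-viscous-estimate6} back into these intermediate $L^q$-bounds---so $\norm{P-\bar P}_{L^p}$ enters through the control of $\norm{\nabla u}_{L^p}$, not through $\bar G=0$ as you suggest. Your surface-divergence identity is essentially the boundary shadow of the paper's volume extension, so the two routes are close cousins; the paper's version simply sidesteps the task of estimating $\rho u\cdot\nabla n\cdot u$ and $\mathrm{div}_{\Gamma}(Au)$ in $W^{-1/p,p}(\pa\Omega)$ uniformly over $p\in[2,6]$.
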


\begin{proof}
  Since the proof of this lemma is similar to that of \cite{Cai-Li2023}, we only give a sketch for simplicity. First, the inequality \eqref{curl-effective-viscous-estimate1} just follows the Hodge-type decomposition in Lemma \eqref{lem-Hodge-decomposition}. For the estimate on $\nabla F$, we consider the following elliptic equations:
  \begin{equation}\label{G-elliptic-equation}
    \begin{cases}
      \Delta G=\div(\rho\dot{u}), & x\in\Omega, \\
      \frac{\pa G}{\pa n}=(\rho\dot{u}-\mu\nabla\times(Au)^{\bot})\cdot n, & x\in\pa\Omega.
    \end{cases}
  \end{equation}
  with the notation
  \begin{equation}\label{defi-vertical}
    f^{\bot}=-f\times n=n\times f.
  \end{equation}
  Here, we should notice that the normal vector $n$ only makes sense on boundary $\pa\Omega$, and can be extended to a smooth vector-valued function on $\bar{\Omega}$. Thus $f^{\bot}$ is well-defined on $\bar{\Omega}$. Similar arguments can be also applicable to $(Au)^{\bot}$.
  
  Due to \eqref{Navier-slip-condition}, $(\curl u+(Au)^{\bot})\times n=0$ on $\Omega$. Then we have for any $\eta\in C^{\infty}(\bar{\Omega})$
  \begin{equation*}
    \begin{aligned}
    \int\nabla\times\curl u\cdot\nabla\eta&=\int(\nabla\times(\curl u+(Au)^{\bot})\cdot\nabla\eta-\int\nabla\times(Au)^{\bot}\cdot\nabla\eta\\
    &=-\int\nabla\times(Au)^{\bot}\cdot\nabla\eta,
    \end{aligned}
  \end{equation*}
  which combined with \eqref{momentum-equation} implies that
  \begin{equation*}
    \int\nabla G\cdot\nabla\eta=\int(\rho\dot{u}-\mu\nabla\times(Au)^{\bot})\cdot\nabla\eta,\quad \forall\eta\in C^{\infty}(\bar{\Omega}).
  \end{equation*}
  Then the standard elliptic estimate on \eqref{G-elliptic-equation} yields that for any $q\in(1,\infty)$
  \begin{equation}\label{G-transition-estimate1}
    \norm{\nabla G}_{L^q}\leq C\norm{\rho\dot{u}-\mu\nabla\times(Au)^{\bot}}_{L^q}\leq C(\norm{\rho\dot{u}}_{L^q}+\norm{\nabla u}_{L^q}),
  \end{equation}
  where we have applied the Poincar\'{e}'s inequality \eqref{Poincare-inequality}, and for any integer $k\geq0$,
  \begin{equation}\label{G-transition-estimate2}
    \norm{\nabla G}_{W^{k+1,q}}\leq C(\norm{\rho\dot{u}}_{W^{k,q}}+\norm{\nabla u}_{W^{k,q}}).
  \end{equation}
  For the vorticity $\curl u$, due to $(\curl u+(Au)^{\bot})\times n|_{\pa\Omega}=0$ and \eqref{momentum-equation}, by Lemma \ref{lem-Hodge-decomposition}, we get that for any $q\in(1,\infty)$
  \begin{equation}\label{curl-transition-estimate1}
  \begin{aligned}
    \norm{\nabla\curl u}_{L^q}&\leq C(\norm{\nabla\times\curl u}_{L^q}+\norm{\nabla(Au)^{\bot}}_{L^q})\\
    &\leq C(\norm{\rho\dot{u}}_{L^q}+\norm{\nabla G}_{L^q}+\norm{\nabla u}_{L^q})\\
    &\leq C(\norm{\rho\dot{u}}_{L^q}+\norm{\nabla u}_{L^q}),
  \end{aligned}
  \end{equation}
  and for any integer $k\geq 0$,
  \begin{equation}\label{curl-transition-estimate2}
    \begin{aligned}
    \norm{\nabla\curl u}_{W^{k+1,q}}&\leq C(\norm{\nabla\times\curl u}_{W^{k+1,q}}+\norm{\curl u}_{L^q}+\norm{(Au)^{\bot}}_{W^{k+2,q}})\\
    &\leq C(\norm{\rho\dot{u}}_{W^{k+1,q}}+\norm{\nabla(Au)^{\bot}}_{W^{k+1,q}}+\norm{\nabla u}_{L^q}).
    \end{aligned}
  \end{equation}
  Since $\bar{F}=0$, one can deduce \eqref{curl-effective-viscous-estimate3} and \eqref{curl-effective-viscous-estimate4} from Gagliardo-Nirenberg interpolation inequality \eqref{GN-inequality1}, \eqref{G-transition-estimate1} and \eqref{curl-transition-estimate1}. The inequality \eqref{curl-effective-viscous-estimate5} follows from Sobolev imbedding, Poincar\'{e}'s inequality \eqref{Poincare-inequality}, \eqref{G-transition-estimate1} and \eqref{curl-transition-estimate1} directly. The last inequality \eqref{curl-effective-viscous-estimate6} is a combination of \eqref{curl-effective-viscous-estimate1}, \eqref{curl-effective-viscous-estimate3} and \eqref{curl-effective-viscous-estimate4}. The second inequality \eqref{curl-effective-viscous-estimate2} follows from \eqref{curl-effective-viscous-estimate6}, \eqref{G-transition-estimate1} and \eqref{curl-transition-estimate1}.
\end{proof}

\begin{rem}\label{rem-small-A}
  In fact, we may need some refined inequalities to deal with $\displaystyle\int_0^T\sigma^3\norm{\nabla u}_{L^4}^4$ in Lemma \ref{lem-control-A1-A2}. Here we have the modified estimates as follows:
  \begin{equation}\label{curl-G-modified-estimate1}
    \norm{\nabla G}_{L^2}+\norm{\nabla\curl u}_{L^2}\leq C(\norm{\rho\dot{u}}_{L^2}+\norm{\nabla(Au)^{\bot}}_{L^2})\leq C(\norm{\rho\dot{u}}_{L^2}+\norm{A}_{W^{1,6}}\norm{\nabla u}_{L^2}),
  \end{equation}
  and due to $(\curl u+(Au)^{\bot})\times n|_{\pa\Omega}=0$ and Poincar\'{e} inequality \eqref{Poincare-inequality},
  \begin{equation}\label{curl-G-modified-estimate2}
    \begin{aligned}
    \norm{\curl u}_{L^2}&\leq C\norm{\nabla(\curl u+(Au)^{\bot})}_{L^2}+\norm{(Au)^{\bot}}_{L^2}\\
    &\leq C(\norm{\nabla\curl u}_{L^2}+\norm{\nabla(Au)^{\bot}}_{L^2})\\
    &\leq C(\norm{\rho\dot{u}}_{L^2}+\norm{A}_{W^{1,6}}\norm{\nabla u}_{L^2}),
    \end{aligned}
  \end{equation}
  and henceforth
  \begin{equation}\label{curl-G-modified-estimate3}
    \begin{aligned}
      \norm{G}_{L^6}+\norm{\curl u}_{L^6}&\leq C(\norm{\nabla G}_{L^2}+\norm{\nabla\curl u}_{L^2}+\norm{\curl u}_{L^2})\\
      &\leq C(\norm{\rho\dot{u}}_{L^2}+\norm{A}_{W^{1,6}}\norm{\nabla u}_{L^2}).
    \end{aligned}
  \end{equation}
  Now we give the estimate on $\norm{\nabla u}_{L^4}^4$ as
  \begin{equation}\label{nabla-u-L^4-estimate1}
    \begin{aligned}
    \norm{\nabla u}_{L^4}^4&\leq C(\norm{G}_{L^4}^4+\norm{\curl u}_{L^4}^4+\norm{P-\bar{P}}_{L^4}^4)\\
    &\leq C(\norm{G}_{L^2}\norm{G}_{L^6}^3+\norm{\curl u}_{L^2}\norm{\curl u}_{L^6}^3)+C\norm{P-\bar{P}}_{L^4}^4\\
    &\leq C(\norm{\nabla u}_{L^2}+\norm{P-\bar{P}}_{L^2})(\norm{\rho\dot{u}}_{L^2}^3+\norm{A}_{W^{1,6}}^3\norm{\nabla u}_{L^2}^3)+C\norm{P-\bar{P}}_{L^4}^4\\
    &\leq C\norm{A}_{W^{1,6}}^3\norm{\nabla u}_{L^4}^4+C(\norm{\nabla u}_{L^2}+\norm{P-\bar{P}}_{L^2})\norm{\rho\dot{u}}_{L^2}^3+C(\norm{A}_{W^{1,6}}^3+1)\norm{P-\bar{P}}_{L^4}^4, 
    \end{aligned}
  \end{equation}
  which implies that
  \begin{equation}\label{nabla-u-L^4-estimate2}
    \norm{\nabla u}_{L^4}^4\leq C(\norm{\nabla u}_{L^2}+\norm{P-\bar{P}}_{L^2})\norm{\rho\dot{u}}_{L^2}^3+C\norm{P-\bar{P}}_{L^4}^4
  \end{equation}
  provided
  \begin{equation}\label{small-A-condition}
    C\norm{A}_{W^{1,6}}^3\leq\frac{1}{2},\quad\textrm{i.e.},\quad \norm{A}_{W^{1,6}}\leq (2C)^{-\frac{1}{3}}
  \end{equation}
  with $C$ only depending on $\mu,\lambda$ and $\Omega$.
\end{rem}

Next, we give the a priori estimate on $\dot{u}$ that will be used later. The detailed proof of following lemma also can be found in \cite{Cai-Li2023} and here we still present a sketch.

\begin{lem}\label{lem-dot-u}
  Let $(\rho,u)$ be a smooth solution of \eqref{Large-CNS-eq} with Navier-slip boundary conditions \eqref{Navier-slip-condition}. Then there exists a constant $C>0$ depending only on $\Omega$ such that
  \begin{equation}\label{dot-u-estimate1}
    \norm{\dot{u}}_{L^6}\leq C(\norm{\nabla\dot{u}}_{L^2}+\norm{\nabla u}_{L^2}^2),
  \end{equation}
  \begin{equation}\label{dot-u-estimate2}
    \norm{\nabla\dot{u}}_{L^2}\leq C(\norm{\div\dot{u}}_{L^2}+\norm{\curl\dot{u}}_{L^2}+\norm{\nabla u}_{L^4}^2).
  \end{equation}
\end{lem}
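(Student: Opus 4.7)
The strategy is to correct $\dot u$ by an auxiliary vector field whose normal trace on $\partial\Omega$ matches that of $\dot u$, so that the corrected field satisfies the slip condition $\tilde v\cdot n=0$; the required estimates will then follow from the Poincar\'e--Sobolev inequality \eqref{GN-inequality1} (with the constant $C_2=0$) and the Hodge decomposition \eqref{Hodge-decomposition1}. The first step is to compute $\dot u\cdot n|_{\partial\Omega}$. Since $u\cdot n=0$ on $\partial\Omega$ for all $t$, differentiation in $t$ gives $u_t\cdot n=0$ on $\partial\Omega$. Moreover, $u$ is tangential on $\partial\Omega$, so $u\cdot\nabla(u\cdot n)=0$ there, and the identity $u\cdot\nabla u\cdot n=u\cdot\nabla(u\cdot n)-u\cdot\nabla n\cdot u$ yields
\[
\dot u\cdot n\big|_{\partial\Omega}=-u\cdot\nabla n\cdot u,
\]
which is pointwise of size $|u|^2$ on $\partial\Omega$.

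Next I build an auxiliary field $w$ on $\Omega$ with $w\cdot n=-u\cdot\nabla n\cdot u$ on $\partial\Omega$, for instance via the cut-off extension $w:=-\phi(u\cdot\nabla n\cdot u)n$, where $\phi$ is a smooth cut-off equal to $1$ in a neighborhood of $\partial\Omega$ and $n$ is smoothly extended into $\Omega$; one can refine this with an elliptic/Neumann lifting if a sharper $L^p$ bound is needed. Using the Poincar\'e inequality \eqref{Poincare-inequality} (valid because $u\cdot n=0$) together with H\"older, the aim is to establish
\[
\norm{w}_{L^6}+\norm{\nabla w}_{L^2}\le C\norm{\nabla u}_{L^2}^2,\qquad \norm{\nabla w}_{L^2}\le C\norm{\nabla u}_{L^4}^2,
\]
where the bound on $\nabla w$ comes from estimates like $\norm{u\nabla u}_{L^2}\le \norm{u}_{L^4}\norm{\nabla u}_{L^4}\le C\norm{\nabla u}_{L^2}\norm{\nabla u}_{L^4}$ and $\norm{u}_{L^4}^2\le C\norm{\nabla u}_{L^2}^2$.

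Setting $\tilde v:=\dot u-w$ so that $\tilde v\cdot n=0$ on $\partial\Omega$, \eqref{GN-inequality1} (with $C_2=0$) gives $\norm{\tilde v}_{L^6}\le C\norm{\nabla\tilde v}_{L^2}$ and Lemma~\ref{lem-Hodge-decomposition} gives $\norm{\nabla\tilde v}_{L^2}\le C(\norm{\div\tilde v}_{L^2}+\norm{\curl\tilde v}_{L^2})$. Combining these with the triangle inequality and the bounds on $w$ then yields
\[
\norm{\dot u}_{L^6}\le \norm{\tilde v}_{L^6}+\norm{w}_{L^6}\le C\norm{\nabla\dot u}_{L^2}+C\norm{\nabla w}_{L^2}+\norm{w}_{L^6}\le C(\norm{\nabla\dot u}_{L^2}+\norm{\nabla u}_{L^2}^2),
\]
\[
\norm{\nabla\dot u}_{L^2}\le \norm{\nabla\tilde v}_{L^2}+\norm{\nabla w}_{L^2}\le C(\norm{\div\dot u}_{L^2}+\norm{\curl\dot u}_{L^2}+\norm{\nabla u}_{L^4}^2).
\]

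The main technical difficulty lies in constructing $w$ so that both $\norm{w}_{L^6}\le C\norm{\nabla u}_{L^2}^2$ and $\norm{\nabla w}_{L^2}\le C\norm{\nabla u}_{L^4}^2$ hold simultaneously: the naive pointwise bound $|w|\le C|u|^2$ only yields $\norm{w}_{L^6}\le C\norm{u}_{L^{12}}^2$, which is not controlled by $\norm{\nabla u}_{L^2}$ in three dimensions. One must therefore use a more careful lifting of the boundary datum $-u\cdot\nabla n\cdot u$ (via an elliptic or Bogovskii-type operator) and exploit the trace estimate $\norm{\dot u\cdot n}_{L^2(\partial\Omega)}\le C\norm{u}_{L^4(\partial\Omega)}^2\le C\norm{\nabla u}_{L^2}^2$ to close the first inequality.
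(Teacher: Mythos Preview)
Your overall strategy---correct $\dot u$ so that it has zero normal trace, then apply Poincar\'e/Sobolev and the div--curl estimate---is the right one, and your computation $\dot u\cdot n|_{\partial\Omega}=-u\cdot\nabla n\cdot u$ is correct. Your argument for \eqref{dot-u-estimate2} is essentially complete once you take the explicit correction $w=(u\cdot\nabla n)\times u^{\bot}$ (with $u^{\bot}=n\times u$ and $n$ extended smoothly to $\bar\Omega$), which satisfies $\norm{\nabla w}_{L^2}\le C(\norm{u}_{L^4}^2+\norm{u}_{L^4}\norm{\nabla u}_{L^4})\le C\norm{\nabla u}_{L^4}^2$; no cut-off or lifting is needed.

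The genuine gap is in \eqref{dot-u-estimate1}. You flag that $\norm{w}_{L^6}\le C\norm{u}_{L^{12}}^2$ fails, but your proposed fix via trace liftings is left vague, and in fact your other claim $\norm{\nabla w}_{L^2}\le C\norm{\nabla u}_{L^2}^2$ is equally problematic: the term $\norm{u\,\nabla u}_{L^2}$ is at best $\norm{u}_{L^4}\norm{\nabla u}_{L^4}$, and $\norm{\nabla u}_{L^4}$ is not controlled by $\norm{\nabla u}_{L^2}$ in three dimensions. The paper sidesteps both issues by \emph{not} applying Sobolev to the corrected field directly. Instead it first uses Poincar\'e--Wirtinger,
\[
\norm{\dot u}_{L^6}\le C\norm{\nabla\dot u}_{L^2}+C\,|\overline{\dot u}\,|\le C\norm{\nabla\dot u}_{L^2}+C\norm{\dot u}_{L^{3/2}},
\]
and only then applies Poincar\'e (now in $L^{3/2}$) to the corrected field $\dot u+(u\cdot\nabla n)\times u^{\bot}$. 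In $L^{3/2}$ the correction is harmless: $\norm{(u\cdot\nabla n)\times u^{\bot}}_{L^{3/2}}\le C\norm{u}_{L^3}^2\le C\norm{\nabla u}_{L^2}^2$ and $\norm{\nabla((u\cdot\nabla n)\times u^{\bot})}_{L^{3/2}}\le C\norm{u}_{L^6}\norm{\nabla u}_{L^2}+C\norm{u}_{L^3}^2\le C\norm{\nabla u}_{L^2}^2$. The drop from $L^6$ to $L^{3/2}$ via the mean is the missing idea.
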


\begin{proof}
  Since it holds that
  \begin{equation*}
    \dot{u}\cdot n=u\cdot\nabla u\cdot n=-u\cdot\nabla n\cdot u=-u\cdot\nabla n\cdot(u^{\bot}\times n)=-(u\cdot\nabla n)\times u^{\bot}\cdot n \textrm{ on }\pa\Omega
  \end{equation*}
 from the simple fact that
  \begin{equation*}
    (a\times b)\cdot c=(b\times c)\cdot a=(c\times a)\cdot b,
  \end{equation*}
  we have
  \begin{equation}\label{dot-u-transition1}
    (\dot{u}+(u\cdot\nabla n)\times u^{\bot})\cdot n=0 \textrm{ on }\pa\Omega.
  \end{equation}
  This together with Sobolev imbedding and Poincar\'{e}'s inequality \eqref{Poincare-inequality} yields that
  \begin{equation*}
  \begin{aligned}
    \norm{\dot{u}}_{L^6}&\leq C(\norm{\nabla\dot{u}}_{L^2}+\norm{\bar{\dot{u}}}_{L^6})\leq C(\norm{\nabla\dot{u}}_{L^2}+\norm{\dot{u}}_{L^{\frac{3}{2}}})\\
    &\leq C(\norm{\nabla\dot{u}}_{L^2}+\norm{\nabla(\dot{u}+(u\cdot\nabla n)\times u^{\bot})}_{L^{\frac{3}{2}}}+\norm{(u\cdot\nabla n)\times u^{\bot}}_{L^{\frac{3}{2}}})\\
    &\leq C(\norm{\nabla\dot{u}}_{L^2}+\norm{\nabla u}_{L^2}^2).
  \end{aligned}
  \end{equation*}
  Finally combining \eqref{dot-u-transition1} with \eqref{Hodge-decomposition1} and Poincar\'{e}'s inequality \eqref{Poincare-inequality}, we deduce
  \begin{equation*}
    \begin{aligned}
    \norm{\nabla\dot{u}}_{L^2}&\leq C(\norm{\div\dot{u}}_{L^2}+\norm{\curl\dot{u}}_{L^2}+\norm{\nabla((u\cdot\nabla n)\times u^{\bot})}_{L^2})\\
    &\leq C(\norm{\div\dot{u}}_{L^2}+\norm{\curl\dot{u}}_{L^2}+\norm{\nabla u}_{L^4}^2).
    \end{aligned}
  \end{equation*}
\end{proof}

\section{Proof of Theorem \ref{thm-global-CNS}}\label{sect-proof-thm}

\subsection{Lower-order a priori estimates}
In this subsection, we are devoted to establishing some necessary a priori estimates for smooth solution $(\rho,u)$ to the problem \eqref{Large-CNS-eq}-\eqref{Navier-slip-condition} on $\Omega\times(0,T]$ for some fixed time $T>0$. Setting $\sigma=\sigma(t)=\min\{1,t\}$, we define
\begin{equation}\label{defi-a-priori-estimate}
  \begin{cases}
    A_1(T)=\sup\limits_{t\in[0,T]}\sigma\displaystyle\int|\nabla u|^2+\int_0^T\displaystyle\int\sigma\rho|\dot{u}|^2, \\
    A_2(T)=\sup\limits_{t\in[0,T]}\sigma^3\displaystyle\int\rho|\dot{u}|^2+\int_0^T\displaystyle\int\sigma^3|\nabla\dot{u}|^2, \\
    A_3(T)=\sup\limits_{t\in[0,T]}\displaystyle\int\rho|u|^3.
  \end{cases}
\end{equation}
Since for the large adiabatic exponent $\gamma>1$, the initial energy $E_0$ in \eqref{defi-total-energy} correspondingly becomes small from the smallness of $\mathcal{E}_0$ in \eqref{small-condition}, without of generality, we assume that
\begin{equation}\label{assume-condition1}
  \epsilon\leq 1, \quad 1<\gamma\leq\frac{3}{2}.
\end{equation}

Then, we give the following proposition that can guarantees the existence of a global classical solution of \eqref{Large-CNS-eq}-\eqref{Navier-slip-condition}.

\begin{pro}\label{prop-a-priori-estimate}
  Assume that the initial data satisfy \eqref{initial-data1}, \eqref{initial-data2} and \eqref{compatibility-condition}. If the solution $(\rho,u)$ satisfy
  \begin{equation}\label{a-priori-assumption}
    A_1(T)\leq2\mathcal{E}_0^{\frac{3}{8}},\quad A_2(T)\leq2\mathcal{E}_0^{\frac{1}{2}},\quad A_3(\sigma(T))\leq2\mathcal{E}_0^{\frac{1}{4}},\quad 0\leq\rho\leq2\tilde{\rho},
  \end{equation}
  then the following estimates hold:
  \begin{equation}\label{a-priori-goal}
    A_1(T)\leq\mathcal{E}_0^{\frac{3}{8}},\quad A_2(T)\leq\mathcal{E}_0^{\frac{1}{2}},\quad A_3(\sigma(T))\leq\mathcal{E}_0^{\frac{1}{4}},\quad 0\leq\rho\leq\frac{7}{4}\tilde{\rho},
  \end{equation}
  provided $\mathcal{E}_0\leq\epsilon$ and $\norm{A}_{W^{1,6}}\leq\hat{\epsilon}$, where $\epsilon>0$ is a small constant depending on $\mu, \lambda, a, \tilde{\rho}, \Omega, M, E_0$, and the matrix $A$, but independent of $\gamma-1$ and $t$ (see \eqref{small-assumption1}, \eqref{small-assumption2}, \eqref{small-assumption3} and \eqref{small-assumption4}), and $\hat{\epsilon}>0$ is a small constant depending only on $\mu, \lambda$ and $\Omega$ (see \eqref{small-A-assumption1}).
\end{pro}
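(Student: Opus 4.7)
My plan is a classical continuation-in-time bootstrap: assuming the a priori bounds \eqref{a-priori-assumption} on $[0,T]$, I strictly improve each of them to \eqref{a-priori-goal} for $\mathcal{E}_0$ and $\|A\|_{W^{1,6}}$ sufficiently small; combined with Lemma \ref{lem-local-sol}, this yields global existence by a standard continuation argument. The key conceptual point, and the substantive difference from \cite{Cai-Li2023}, is that every estimate must isolate a positive power of the modified energy $\mathcal{E}_0=\int\tfrac12\rho_0|u_0|^2+(\gamma-1)E_0$ rather than of $E_0$, since here $E_0$ is allowed to be large. The starting input is the basic energy identity (multiplying $\eqref{Large-CNS-eq}_2$ by $u$), which by the positive semi-definiteness of $A$ and the slip conditions gives $\sup_t\int(\tfrac12\rho|u|^2+\tfrac{1}{\gamma-1}P)+\int_0^T\int(\mu|\nabla u|^2+(\mu+\lambda)|\div u|^2)\leq CE_0$. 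Multiplying by $\gamma-1$ produces the only piece of smallness that survives at this level, namely $\|P\|_{L^1}\leq C\mathcal{E}_0$; the dissipation $\int_0^T\|\nabla u\|_{L^2}^2$ itself is only $O(E_0)$.

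Next I would test \eqref{momentum-equation} against $\sigma u_t$ and $\sigma^3\dot u$ in turn, integrating by parts with the Navier-slip conditions and appealing to Lemmas \ref{lem-curl-effective-viscous}-\ref{lem-dot-u}. This produces the two coupled inequalities \eqref{A1-A2-inequality1}-\eqref{A1-A2-inequality2} advertised in the introduction,
\[
A_1(T)\leq A_1(\sigma(T))+C(\tilde\rho)E_0^{1/2}A_1(T)^{3/4}A_2(T)^{1/4},
\]
\[
A_2(T)\leq CA_1(\sigma(T))+CA_1(T)^{3/2}+C(\tilde\rho)A_1(T)^3(E_0+1)+\int_0^T\sigma^3\bigl(\|\nabla u\|_{L^4}^4+\|P-\bar P\|_{L^4}^4\bigr)+\cdots.
\]
The short-time contribution $A_1(\sigma(T))$ is controlled via the compatibility condition \eqref{compatibility-condition} in the standard way. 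Feeding in the ansatz $A_2(T)\sim A_1(T)^{4/3}$ --- dictated by the fact that the $A_2$ inequality sits between linear and $3/2$-power dependence on $A_1$, and differing from the $A_1\sim A_2^2$ choice of \cite{Zhu2024} --- makes the pair self-consistent; the term $E_0^{1/2}A_1^{3/4}A_2^{1/4}\sim E_0^{1/2}A_1^{11/12}$ can then be absorbed using the explicit smallness condition $\epsilon\leq(2C(\tilde\rho)E_0^{1/2})^{-32}$ listed in the theorem statement.

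The main obstacle is the estimate of $\int_0^T\sigma^3(\|\nabla u\|_{L^4}^4+\|P-\bar P\|_{L^4}^4)$. The two natural routes \eqref{trouble-estimate1}-\eqref{trouble-estimate2} are circular and each produces an uncontrollable $\int_0^T\sigma^3\|\nabla u\|_{L^2}^4$, which in the Cauchy case of \cite{Zhu2024} happens to vanish but here stems from the boundary. To break the loop I would use the refinement of Lemma \ref{lem-curl-effective-viscous} in Remark \ref{rem-small-A}: inequality \eqref{nabla-u-L^4-estimate1} contains a coefficient $C\|A\|_{W^{1,6}}^3$ in front of $\|\nabla u\|_{L^4}^4$, and the smallness $\|A\|_{W^{1,6}}\leq\hat\epsilon=(2C)^{-1/3}$ is precisely what allows absorption and gives \eqref{nabla-u-L^4-estimate2}. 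The $L^4$-pressure piece is then handled by applying the Bogovskii operator $\mathcal{B}$ of Lemma \ref{lem-Bogovskii-operator} to the transport equation for $P$, tested against $\mathcal{B}[P^3-\overline{P^3}]$, with the already-proved $\|P\|_{L^1}\leq C\mathcal{E}_0$ supplying the needed smallness factor. The resulting estimate, coupled with the two inequalities above, closes the bootstrap to $A_1(T)\leq\mathcal{E}_0^{3/8}$ and $A_2(T)\leq\mathcal{E}_0^{1/2}$.

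Once $A_1$ and $A_2$ are improved, $A_3(\sigma(T))\leq\mathcal{E}_0^{1/4}$ follows by testing $\eqref{Large-CNS-eq}_2$ against $3|u|u$ on the short interval $[0,\sigma(T)]$ and plugging in the refined $A_1$ bound. The density bound is obtained by rewriting the continuity equation along particle trajectories,
\[
\partial_t\log\rho+u\cdot\nabla\log\rho=-\tfrac{1}{2\mu+\lambda}(G+P-\bar P),
\]
and applying Zlotnik's Lemma \ref{lem-Zlotnik-inequality} with $g(\zeta)=-\tfrac{a}{2\mu+\lambda}e^{\gamma\zeta}$ (ensuring $g(\infty)=-\infty$) and $b(t)=-\int_0^t(2\mu+\lambda)^{-1}G\,ds$. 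Verifying the sub-linear condition \eqref{Zlotnik-condition1} uses an $L^\infty$-bound on $G$ extracted from \eqref{curl-effective-viscous-estimate3} and Gagliardo-Nirenberg, which is now controlled by the refined $A_1,A_2$. Careful bookkeeping of the explicit smallness conditions on $\epsilon$ listed in the theorem --- each arising from a single step of the bootstrap --- then guarantees that the improved bounds genuinely beat the working hypotheses by a factor of two, and that $\rho\leq\max\{\tilde\rho,\zeta_0\}+N_0\leq\tfrac74\tilde\rho$, completing the proposition.
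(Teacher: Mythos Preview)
Your overall architecture matches the paper's, but two steps differ in substantive ways.

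First, the control of $\int_0^T\sigma^3\|P-\bar P\|_{L^4}^4$. The paper does \emph{not} use the Bogovskii operator here. Instead it multiplies the transport equation \eqref{Pressure-eq2} for $P-\bar P$ directly by $3\sigma^3(P-\bar P)^2$ and substitutes $(2\mu+\lambda)\div u=G+(P-\bar P)$; this produces the good term $\frac{3\gamma-1}{2\mu+\lambda}\int\sigma^3|P-\bar P|^4$ on the left and a quartic $\|G\|_{L^4}^4$ on the right (see \eqref{P-L^4-estimate1}--\eqref{P-L^4-estimate2}). Your suggestion of ``testing the transport equation for $P$ against $\mathcal B[P^3-\overline{P^3}]$'' does not obviously generate $\|P-\bar P\|_{L^4}^4$; Bogovskii potentials are paired with the \emph{momentum} equation, not the pressure transport equation, and even then would need $\div v=(P-\bar P)^3-\overline{(P-\bar P)^3}$. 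As written this step is a gap.

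Second, the short-time bound on $A_1(\sigma(T))$ is \emph{not} obtained from the compatibility condition \eqref{compatibility-condition}; that condition is used only for the higher-order estimates in Lemma~\ref{lem-higher-order-estimate}. The paper relies on two ingredients you do not mention: (i) the short-time smallness $\int_0^{\sigma(T)}\|\nabla u\|_{L^2}^2\le C(\tilde\rho)\mathcal E_0$ of Lemma~\ref{lem-essential-estimate}, obtained by testing $\eqref{Large-CNS-eq}_2$ against $u$ and bounding $\int P\,\div u$ via $\|P\|_{L^1}\le C\mathcal E_0$; and (ii) the bootstrap hypothesis on $A_3$, which in Lemma~\ref{lem-small-time-estimate} is used to absorb the cubic term $C(\tilde\rho)\|\rho^{1/3}u\|_{L^3}\|\sqrt\rho\dot u\|_{L^2}^2$ (this is precisely why $A_3$ is part of the bootstrap). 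With these two inputs one gets $A_1(\sigma(T))\le C(\tilde\rho,M)\mathcal E_0(1+E_0)$ as in \eqref{A1-estimate-small-T2}. So the logical order in the paper is: Lemma~\ref{lem-essential-estimate} $\to$ Lemma~\ref{lem-small-time-estimate} (using the $A_3$ assumption) $\to$ Lemma~\ref{lem-u-L^3} (closing $A_3$) $\to$ Lemma~\ref{lem-control-A1-A2} (closing $A_1,A_2$) $\to$ Lemma~\ref{lem-rho-bound}. Your ordering places $A_3$ last, which is fine since the bootstrap assumption on $A_3$ remains available, but your stated mechanism for $A_1(\sigma(T))$ is wrong. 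The rest of your sketch---the $A_2\sim A_1^{4/3}$ ansatz, the use of \eqref{nabla-u-L^4-estimate2} under $\|A\|_{W^{1,6}}$ small, and Zlotnik's lemma for the density (the paper works with $\rho$ rather than $\log\rho$, but this is cosmetic)---agrees with the paper.
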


\begin{proof}
  Proposition \ref{prop-a-priori-estimate} can be directly derived from Lemma \ref{lem-essential-energy}-\ref{lem-rho-bound} below.
\end{proof}

We begin with the following standard energy estimate for $(\rho,u)$.

\begin{lem}\label{lem-essential-energy}
  Let $(\rho,u)$ be a smooth solution of \eqref{Large-CNS-eq}-\eqref{Navier-slip-condition} on $\Omega\times(0,T]$. Then it holds that
  \begin{equation}\label{essential-energy}
    \sup_{t\in[0,T]}\int(\frac{1}{2}\rho|u|^2+\frac{1}{\gamma-1}P(\rho))+\int_0^T\int(\mu|\curl u|^2+(2\mu+\lambda)|\div u|^2)\leq E_0.
  \end{equation}
\end{lem}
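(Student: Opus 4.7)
The plan is to derive a differential energy identity by testing the momentum equation against $u$, combining with the continuity equation to produce the kinetic-plus-potential energy on the left, and then exploiting the Navier-slip boundary conditions together with the positive semi-definiteness of $A$ to discard the boundary contribution as a favorable sign term.

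First I would rewrite the viscous operator using the identity $-\Delta u = \nabla\times\curl u - \nabla\div u$, so that the momentum equation \eqref{Large-CNS-eq}$_2$ becomes
\begin{equation*}
  \partial_t(\rho u)+\div(\rho u\otimes u)+\mu\nabla\times\curl u-(2\mu+\lambda)\nabla\div u+\nabla P=0.
\end{equation*}
Taking the $L^2$ inner product with $u$ and using the continuity equation \eqref{Large-CNS-eq}$_1$, the convective part collapses to $\tfrac{d}{dt}\int\tfrac{1}{2}\rho|u|^2$. Integration by parts against $u$ then yields, via the vector identity $\int(\nabla\times\curl u)\cdot u=\int|\curl u|^2+\int_{\partial\Omega}(n\times\curl u)\cdot u$ together with $u\cdot n|_{\partial\Omega}=0$, the viscous dissipation $\mu\int|\curl u|^2+(2\mu+\lambda)\int|\div u|^2$ plus a boundary term $\mu\int_{\partial\Omega}(n\times\curl u)\cdot u\,dS$. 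The Navier-slip condition $\curl u\times n=-Au$ rewrites $n\times\curl u=Au$ on $\partial\Omega$, turning this boundary term into $\mu\int_{\partial\Omega}Au\cdot u\,dS$, which is $\geq 0$ since $A$ is symmetric and positive semi-definite.

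For the pressure contribution I would separately evaluate $\tfrac{d}{dt}\int\tfrac{P(\rho)}{\gamma-1}$. Differentiating, using $\rho_t=-\div(\rho u)$, and integrating by parts (with no boundary term because $u\cdot n=0$), the identity $\nabla\rho^{\gamma-1}=\tfrac{\gamma-1}{\gamma}\rho^{-1}\nabla\rho^\gamma$ lets one verify the clean relation
\begin{equation*}
  \frac{d}{dt}\int\frac{P}{\gamma-1}=-\int P\,\div u,
\end{equation*}
and since $\int\nabla P\cdot u=-\int P\,\div u$, the pressure term in the energy identity contributes exactly $\tfrac{d}{dt}\int\tfrac{P}{\gamma-1}$.

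Summing the two computations produces the differential identity
\begin{equation*}
  \frac{d}{dt}\int\Bigl(\tfrac{1}{2}\rho|u|^2+\tfrac{1}{\gamma-1}P(\rho)\Bigr)+\mu\int|\curl u|^2+(2\mu+\lambda)\int|\div u|^2+\mu\int_{\partial\Omega}Au\cdot u\,dS=0.
\end{equation*}
Dropping the nonnegative boundary term and integrating in time from $0$ to any $t\in[0,T]$ gives the claimed estimate with right-hand side $E_0$ as defined in \eqref{defi-total-energy}. There is no real obstacle here; the only delicate point is the correct bookkeeping of the boundary integral arising from the vorticity-based integration by parts, and the fact that the Navier-slip condition plus the positive semi-definiteness hypothesis on $A$ makes this boundary term have the favorable sign (so no smallness on $A$ is required at this stage).
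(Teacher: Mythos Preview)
Your proposal is correct and follows essentially the same approach as the paper: both use the decomposition $-\Delta u=\nabla\times\curl u-\nabla\div u$, test the momentum equation against $u$, combine with the continuity equation (the paper writes it as the pressure equation \eqref{Pressure-eq}, you differentiate the potential energy directly---these are equivalent), and use the Navier-slip condition together with the positive semi-definiteness of $A$ to discard the boundary term $\mu\int_{\partial\Omega}Au\cdot u\,dS\geq 0$.
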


\begin{proof}
  Rewriting $\eqref{Large-CNS-eq}_1$ as
  \begin{equation}\label{Pressure-eq}
    P_t+\div(uP)+(\gamma-1)P\div u=0,
  \end{equation}
  and integrating over $\Omega$, then adding it to the $L^2$-inner product of $\eqref{Large-CNS-eq}_2$ with $u$ yields that
  \begin{equation}\label{essential-energy-estimate}
    \frac{d}{dt}\int(\frac{1}{2}\rho|u|^2+\frac{1}{\gamma-1}P(\rho))+\int(\mu|\curl u|^2+(2\mu+\lambda)|\div u|^2)+\mu\int_{\pa\Omega}Au\cdot u=0,
  \end{equation}
  where we have used the fact that
  \begin{equation*}
    \Delta u=\nabla\div u-\nabla\times\curl u.
  \end{equation*}
  A direct integration on $[0,T]$ gives the inequality \eqref{essential-energy}.  
\end{proof}

The following a priori estimate is essential to close the a priori assumption \eqref{a-priori-assumption}.
\begin{lem}\label{lem-essential-estimate}
  Under the conditions of Proposition \ref{prop-a-priori-estimate}, it holds that
  \begin{equation}\label{essential-estimate}
    \sup_{t\in[0,\sigma(T)]}\int\rho|u|^2+\int_0^{\sigma(T)}\int|\nabla u|^2\leq C(\tilde{\rho})\mathcal{E}_0.
  \end{equation}
\end{lem}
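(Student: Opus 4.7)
The plan is to run the standard kinetic-energy estimate on the short window $[0,\sigma(T)]$, while being careful with the pressure term on the right-hand side. The key observation, which replaces the smallness of $E_0$ used in \cite{Cai-Li2023}, is that Lemma \ref{lem-essential-energy} already yields $\sup_t\int P(\rho)\leq(\gamma-1)E_0$, and by definition \eqref{defi-modified-energy} this quantity is bounded by $\mathcal{E}_0$ even though $E_0$ itself may be large. Consequently, whenever the right-hand side of an energy estimate can be reduced to a multiple of $\int P(\rho)$, we automatically obtain a bound in $\mathcal{E}_0$.

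Concretely, I would take the $L^2$ inner product of $\eqref{Large-CNS-eq}_2$ with $u$ and integrate by parts, using $u\cdot n=0$ on $\partial\Omega$, the identity $\Delta u=\nabla\div u-\nabla\times\curl u$, and the slip condition $\curl u\times n=-Au$. This is precisely the computation in Lemma \ref{lem-essential-energy} and produces
\begin{equation*}
\frac{1}{2}\frac{d}{dt}\int\rho|u|^2 + \mu\int|\curl u|^2 + (2\mu+\lambda)\int|\div u|^2 + \mu\int_{\partial\Omega}Au\cdot u = \int P\,\div u.
\end{equation*}
The boundary term is nonnegative since $A$ is positive semi-definite, so it can be dropped. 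For the right-hand side, $\int\div u=0$ allows the replacement of $P$ by $P-\bar P$, and Young's inequality absorbs a fraction of $(2\mu+\lambda)\int|\div u|^2$ on the left, at the cost of $C\int P^2$. The a priori bound $\rho\leq 2\tilde{\rho}$ together with $\gamma\in(1,\tfrac{3}{2}]$ then gives $P^2=P\cdot a\rho^{\gamma}\leq C(\tilde{\rho})P$. Finally, the Hodge-type inequality \eqref{curl-effective-viscous-estimate1} converts $\int|\div u|^2+\int|\curl u|^2$ into a lower bound for a constant multiple of $\int|\nabla u|^2$.

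Putting these pieces together, one arrives at a differential inequality of the form
\begin{equation*}
\frac{d}{dt}\int\rho|u|^2 + c\int|\nabla u|^2 \leq C(\tilde{\rho})\int P(\rho),
\end{equation*}
with $c>0$ depending only on $\mu,\lambda,\Omega$. Integrating over $[0,\sigma(T)]$, using $\sigma(T)\leq 1$, $\sup_t\int P(\rho)\leq(\gamma-1)E_0\leq\mathcal{E}_0$, and the initial bound $\int\rho_0|u_0|^2\leq 2\mathcal{E}_0$ from \eqref{defi-modified-energy}, yields exactly \eqref{essential-estimate}. There is no real technical obstacle in this argument; the essential point is only the replacement of the smallness of $E_0$ (on which the small-energy framework of \cite{Cai-Li2023} rests) by the smallness of $\int P(\rho)$, which is the genuinely new quantity controlled by $\mathcal{E}_0$ in the Nishida--Smoller regime.
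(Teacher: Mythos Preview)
Your proposal is correct and follows essentially the same route as the paper: the same kinetic-energy identity, Young's inequality on $\int P\,\div u$ at the cost of $C\int P^2\leq C(\tilde\rho)\int P\leq C(\tilde\rho)(\gamma-1)E_0\leq C(\tilde\rho)\mathcal{E}_0$, and the Hodge inequality \eqref{curl-effective-viscous-estimate1} to recover $\norm{\nabla u}_{L^2}^2$. The replacement of $P$ by $P-\bar P$ is unnecessary (the paper works directly with $P$), but harmless.
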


\begin{proof}
  Taking $L^2$-inner product of $\eqref{Large-CNS-eq}_2$ with $u$, it follows from integration by parts that
  \begin{equation}\label{essential-estimate1}
    \frac{d}{dt}\int\frac{1}{2}\rho|u|^2+\int(\mu|\curl u|^2+(2\mu+\lambda)|\div u|^2)+\mu\int_{\pa\Omega}Au\cdot u=\int P\div u.
  \end{equation}
  Integrating \eqref{essential-estimate1} over $[0,\sigma(T)]$, and using Cauchy's inequality and \eqref{essential-energy}, we have
  \begin{equation}\label{essential-estimate2}
    \begin{aligned}
    &\quad\sup_{t\in[0,\sigma(T)]}\int\frac{1}{2}\rho|u|^2+\int_0^{\sigma(T)}\int(\mu|\curl u|^2+\frac{1}{2}(2\mu+\lambda)|\div u|^2)\\
    &\leq\int\frac{1}{2}\rho_0|u_0|^2+C\int_0^{\sigma(T)}\int|P|^2\leq \int\frac{1}{2}\rho_0|u_0|^2+C(\tilde{\rho})\int P\\
    &\leq\frac{1}{2}\rho_0|u_0|^2+C(\tilde{\rho})(\gamma-1)E_0\leq C(\tilde{\rho})\mathcal{E}_0,
    \end{aligned}
  \end{equation}
  which together with \eqref{Hodge-decomposition1} implies \eqref{essential-estimate}.
\end{proof}

The next lemma gives the estimates on $A_1(T)$ and $A_2(T)$.
\begin{lem}\label{lem-A1-A2-control}
  Under the conditions of Proposition \ref{prop-a-priori-estimate}, it holds that
  \begin{equation}\label{A1-control}
    A_1(T)\leq C(\tilde{\rho})\mathcal{E}_0+C\mathcal{E}_0E_0+C\int_0^T\int\sigma P|\nabla u|^2+C\int_0^T\sigma\norm{\nabla u}_{L^3}^3+C(\tilde{\rho})\int_0^T\sigma\norm{\nabla u}_{L^2}^4,
  \end{equation}
  \begin{equation}\label{A2-control}
    \begin{aligned}
     A_2(T)&\leq C(\tilde{\rho})\mathcal{E}_0+CA_1^{\frac{3}{2}}(T)(1+C(\tilde{\rho})A_1(T))^{\frac{1}{2}}+CA_1(\sigma(T))+C(\tilde{\rho})A_1^3(T)(E_0+1)\\
    &\quad+C\int_0^T\sigma^3(\norm{\nabla u}_{L^4}^4+\norm{P|\nabla u|}_{L^2}^2+\norm{P-\bar{P}}_{L^4}^4),
    \end{aligned}
  \end{equation}
  provided $\mathcal{E}_0\leq\epsilon_1=1$.
\end{lem}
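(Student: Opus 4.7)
The plan is to establish \eqref{A1-control} and \eqref{A2-control} by the classical Hoff/Huang--Li--Xin strategy of testing the momentum equation $\eqref{Large-CNS-eq}_2$ against the material derivative $\dot u$ with the weight $\sigma$ (for $A_1$) and then against $\sigma^3\dot u$ after one further ``material differentiation'' (for $A_2$), with careful bookkeeping of all slip-boundary contributions.

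For \eqref{A1-control}, I would multiply $\eqref{Large-CNS-eq}_2$ by $\sigma\dot u$, integrate over $\Omega$, and split $\dot u=u_t+u\cdot\nabla u$. Integration by parts on the viscous terms together with $\Delta u=\nabla\div u-\nabla\times\curl u$ and the boundary relation $\curl u\times n=-Au$ converts the $u_t$-piece into $\frac{d}{dt}\bigl[\frac{\sigma}{2}\int(\mu|\curl u|^2+(2\mu+\lambda)|\div u|^2)+\frac{\mu\sigma}{2}\int_{\pa\Omega}Au\cdot u\bigr]$ plus a $\sigma'$-error that is controlled on $[0,\sigma(T)]$ by Lemma~\ref{lem-essential-estimate} and yields the $C(\tilde\rho)\mathcal E_0$ contribution. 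The pressure term $-\int\sigma\nabla P\cdot\dot u$ is rewritten using the pressure equation \eqref{Pressure-eq}: integrating by parts in space and then in time it produces exactly $\int_0^T\int\sigma P|\nabla u|^2$ plus a residual bounded by $C\mathcal E_0 E_0$ via \eqref{essential-energy}. The convective piece $-\int\sigma\rho(u\cdot\nabla u)\cdot\dot u$, after using $0\le\rho\le 2\tilde\rho$, H\"older and \eqref{GN-inequality1}, generates precisely the two residuals $\int_0^T\sigma\|\nabla u\|_{L^3}^3$ and $C(\tilde\rho)\int_0^T\sigma\|\nabla u\|_{L^2}^4$ appearing on the right of \eqref{A1-control}.

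For \eqref{A2-control}, I would apply the operator $\pa_t+\div(u\,\cdot)$ to $\eqref{Large-CNS-eq}_2$, obtaining $\rho(\dot u)_t+\rho u\cdot\nabla\dot u-\mu\Delta\dot u-(\mu+\lambda)\nabla\div\dot u+\text{commutators}+\nabla P_t=0$, and test against $\sigma^3\dot u$. The Hodge-type estimate of Lemma~\ref{lem-dot-u}, together with the identity $\dot u\cdot n=-u\cdot\nabla n\cdot u$ on $\pa\Omega$, turns the principal viscous block into $\tfrac12\tfrac{d}{dt}\bigl(\sigma^3\int\rho|\dot u|^2\bigr)+\sigma^3\int(\mu|\nabla\dot u|^2+(\mu+\lambda)|\div\dot u|^2)$ modulo controllable remainders. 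The factor $(\sigma^3)'=3\sigma^2\sigma'$, supported in $[0,\sigma(T)]$, yields against $\int\rho|\dot u|^2$ a contribution bounded by $CA_1(\sigma(T))$ since $\sigma^2\le\sigma$ there. The $\pa_t P$-term, handled once more through \eqref{Pressure-eq}, delivers the quartic $\int_0^T\sigma^3\|P|\nabla u|\|_{L^2}^2+\int_0^T\sigma^3\|P-\bar P\|_{L^4}^4$; the convective commutators, after H\"older and \eqref{curl-effective-viscous-estimate6}, produce the remaining $\int_0^T\sigma^3\|\nabla u\|_{L^4}^4$ and, via a Young splitting balanced against \eqref{essential-energy}, the term $C(\tilde\rho)A_1^3(T)(E_0+1)$.

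The step I expect to be the main obstacle is the boundary contribution that generates $A_1^{3/2}(T)(1+C(\tilde\rho)A_1(T))^{1/2}$, which as emphasised in the introduction is absent in the Cauchy case of \cite{Zhu2024} and comes solely from the slip condition. After integrating $\int\sigma^3\dot u\cdot\nabla G$ by parts one is left with $\int_{\pa\Omega}\sigma^3(u\cdot\nabla n\cdot u)G$, because $\dot u\cdot n=-u\cdot\nabla n\cdot u\neq0$. To control it I would combine the trace inequality $\|G\|_{L^2(\pa\Omega)}\le C\|G\|_{L^2}^{1/2}\|G\|_{H^1}^{1/2}$, the refined bound \eqref{curl-G-modified-estimate1} for $\|\nabla G\|_{L^2}$, and $\|u\|_{L^4(\pa\Omega)}\lesssim\|u\|_{H^1}$, then balance by Young's inequality so that every $\|\nabla\dot u\|_{L^2}^2$-factor can be absorbed into the LHS of $A_2(T)$. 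Taking the supremum in $t$ and invoking the definition \eqref{defi-a-priori-estimate}, the leftover carries exactly the power $A_1^{3/2}(T)$; the correction $(1+C(\tilde\rho)A_1(T))^{1/2}$ arises from the $\|P-\bar P\|_{L^2}$ tail in \eqref{curl-effective-viscous-estimate3}. All other pieces are standard energy manipulations, so essentially all the new work is concentrated in this boundary bookkeeping.
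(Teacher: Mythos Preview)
Your overall strategy is correct, but two specific mechanisms are misidentified and one key step is missing.

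For \eqref{A1-control}: when you test $\rho\dot u$ against $\sigma\dot u$, the inertial block is exactly $\sigma\int\rho|\dot u|^2$; there is no separate ``convective piece'' $-\int\sigma\rho(u\cdot\nabla u)\cdot\dot u$ to estimate. The residuals $\int_0^T\sigma\|\nabla u\|_{L^3}^3$ and $C(\tilde\rho)\int_0^T\sigma\|\nabla u\|_{L^2}^4$ come instead from the viscous blocks $I_2,I_3$ (after integration by parts one finds cubic commutators such as $\int\sigma(\div u)^3$ and $\int\sigma|\curl u|^2\div u$) and, crucially, from the boundary term $\int_{\partial\Omega}\sigma[(2\mu+\lambda)\div u-P](u\cdot\nabla u)\cdot n=-\int_{\partial\Omega}\sigma(G-\bar P)(u\cdot\nabla n\cdot u)$. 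The paper does not use trace inequalities here: it writes $u=u^\perp\times n$ on $\partial\Omega$ and converts every such boundary integral back into a volume integral via the divergence theorem, which is what produces $\|\nabla G\|_{L^2}\|u\|_{L^4}^2$ and hence, after Young, the $C(\tilde\rho)\sigma\|\nabla u\|_{L^2}^4$ term.

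For \eqref{A2-control}: after applying $\partial_t+\div(u\,\cdot)$ to \eqref{momentum-equation} the boundary contribution from $\nabla G_t$ is $\int_{\partial\Omega}\sigma^3 G_t(\dot u\cdot n)=-\int_{\partial\Omega}\sigma^3 G_t(u\cdot\nabla n\cdot u)$, carrying $G_t$, not $G$. One cannot bound $G_t$ on the boundary directly; the essential move (which your proposal omits) is an integration by parts \emph{in time}, placing $-\int_{\partial\Omega}\sigma^3 G(u\cdot\nabla n\cdot u)$ under the time derivative together with $\sigma^3\int\rho|\dot u|^2$. It is the supremum of this stored boundary term that yields $A_1^{3/2}$: the paper bounds $|\int_{\partial\Omega}G(u\cdot\nabla n\cdot u)|\le C\|G\|_{H^1}\|\nabla u\|_{L^2}^2\le C(\|\rho\dot u\|_{L^2}+\|\nabla u\|_{L^2})\|\nabla u\|_{L^2}^2$ (again via the $u^\perp\times n$ divergence trick, not a trace estimate) and then applies Young to $\|\rho\dot u\|_{L^2}\|\nabla u\|_{L^2}^2$, obtaining $\tfrac14\|\sqrt\rho\dot u\|_{L^2}^2+C(\tilde\rho)\|\nabla u\|_{L^2}^4$. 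Thus the factor $(1+C(\tilde\rho)A_1(T))^{1/2}$ is just a compact way of writing $CA_1^{3/2}+C(\tilde\rho)A_1^2$, and the $C(\tilde\rho)$ comes from $\rho\le2\tilde\rho$ in that Young split, not from a $\|P-\bar P\|_{L^2}$ tail in \eqref{curl-effective-viscous-estimate3}. The absorption is into $\sup_t\sigma^3\|\sqrt\rho\dot u\|_{L^2}^2$, not into $\int_0^T\sigma^3\|\nabla\dot u\|_{L^2}^2$.
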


\begin{proof}
   For any integer $m\geq0$, multiplying $\eqref{Large-CNS-eq}_2$ by $\sigma^m\dot{u}$ and integrating over $\Omega$, we obtain
  \begin{equation}\label{A-control1}
    \begin{aligned}
    \int\sigma^m\rho|\dot{u}|^2&=-\int\sigma^m\dot{u}\cdot\nabla P+(2\mu+\lambda)\int\sigma^m\nabla\div u\cdot\dot{u}-\mu\int\sigma^m\nabla\times\curl u\cdot\dot{u}\\
    &=I_1+I_2+I_3.
    \end{aligned}
  \end{equation}
  We will estimate $I_1,I_2$ and $I_3$. First, a direct calculation by integration by parts yields that
  \begin{equation}\label{I_1-estimate}
    \begin{aligned}
    I_1&=-\int\sigma^m\dot{u}\cdot\nabla P\\
    &=\int\sigma^mP\div u_t-\int\sigma^mu\cdot\nabla u\cdot\nabla P\\
    &=\left(\int\sigma^mP\div u\right)_t-m\sigma^{m-1}\sigma'\int P\div u+\int\sigma^mP\nabla u:\nabla u^T+(\gamma-1)\int\sigma^mP(\div u)^2\\
    &\quad-\int_{\pa\Omega}\sigma^mPu\cdot\nabla u\cdot n,
    \end{aligned}
  \end{equation}
  where we have used the equation \eqref{Pressure-eq}
  \begin{equation*}
    P_t+\div(Pu)+(\gamma-1)P\div u=0.
  \end{equation*}
  
  Similarly, we estimate $I_2$ as
  \begin{equation}\label{I_2-estimate}
    \begin{aligned}
    I_2&=(2\mu+\lambda)\int\sigma^m\nabla\div u\cdot\dot{u}\\
    &=(2\mu+\lambda)\int_{\pa\Omega}\sigma^m\div u\dot{u}\cdot n-(2\mu+\lambda)\int\sigma^m\div u\div\dot{u}\\
    &=(2\mu+\lambda)\int_{\pa\Omega}\sigma^m\div uu\cdot\nabla u\cdot n-\frac{2\mu+\lambda}{2}\left(\int\sigma^m|\div u|^2\right)_t\\
    &\quad-(2\mu+\lambda)\int\sigma^m\div u\div(u\cdot\nabla u)+\frac{1}{2}m(\mu+\lambda)\sigma^{m-1}\sigma'\int|\div u|^2\\
    &=(2\mu+\lambda)\int_{\pa\Omega}\sigma^m\div uu\cdot\nabla u\cdot n-\frac{2\mu+\lambda}{2}\left(\int\sigma^m|\div u|^2\right)_t\\
    &\quad+\frac{2\mu+\lambda}{2}\int\sigma^m(\div u)^3-(2\mu+\lambda)\int\sigma^m\div u\nabla u:\nabla u^T+\frac{1}{2}m(2\mu+\lambda)\sigma^{m-1}\sigma'\int|\div u|^2.
    \end{aligned}
  \end{equation}
  Combining the boundary terms in \eqref{I_1-estimate} and \eqref{I_2-estimate}, we have
  \begin{equation}\label{boundary-term-A1-1}
    \begin{aligned}
    &\quad\int_{\pa\Omega}\sigma^m[(2\mu+\lambda)\div u-P]u\cdot\nabla u\cdot n\\
    &=\int_{\pa\Omega}\sigma^m(G-\bar{P})u\cdot\nabla u\cdot n=-\int_{\pa\Omega}\sigma^m(G-\bar{P})u\cdot\nabla n\cdot u\\
    &=-\int_{\pa\Omega}\sigma^m(u^{\bot}\times n)\cdot\nabla n_iu_iG+\bar{P}\int_{\pa\Omega}\sigma^mu\cdot\nabla n\cdot u\\
    &=-\int_{\pa\Omega}\sigma^m(\nabla n_i\times u^{\bot})\cdot nu_iG+\bar{P}\int_{\pa\Omega}\sigma^mu\cdot\nabla n\cdot u\\
    &=-\int\sigma^m\div((\nabla n_i\times u^{\bot})u_iG)+\bar{P}\int_{\pa\Omega}\sigma^mu\cdot\nabla n\cdot u\\
    &\leq C\int\sigma^m(|G||u|^2+|G||u||\nabla u|+|\nabla G||u|^2)+C\bar{P}\sigma^m\norm{u}_{H^1}^2\\
    &\leq C\sigma^m(\norm{G}_{L^2}\norm{u}_{L^4}^2+\norm{G}_{L^3}\norm{u}_{L^6}\norm{\nabla u}_{L^2}+\norm{\nabla G}_{L^2}\norm{u}_{L^4}^2)+C\bar{P}\sigma^m\norm{u}_{H^1}^2\\
    &\leq C\sigma^m\norm{G}_{H^1}\norm{u}_{H^1}^2+C\bar{P}\sigma^m\norm{u}_{H^1}^2\\
    &\leq C\sigma^m(\norm{\rho\dot{u}}_{L^2}\norm{\nabla u}_{L^2}^2+\norm{\nabla u}_{L^2}^3)+C\bar{P}\sigma^m\norm{\nabla u}_{L^2}^2\\
    &\leq\frac{1}{4}\sigma^m\norm{\sqrt{\rho}\dot{u}}_{L^2}^2+C(\tilde{\rho})\sigma^m\norm{\nabla u}_{L^2}^4+C\sigma^m(\norm{\nabla u}_{L^2}^3+\bar{P}\norm{\nabla u}_{L^2}^2),
    \end{aligned}
  \end{equation}
  where we have used Poincar\'{e} inequality, Sobolev imbedding, the simple argument on deriving \eqref{dot-u-transition1} and Lemma \ref{lem-curl-effective-viscous}.
  
  Finally, by \eqref{Navier-slip-condition} a similar computation on $I_3$ gives that
  \begin{equation}\label{I_3-estimate}
    \begin{aligned}
    I_3&=-\mu\int\sigma^m\nabla\times\curl u\cdot\dot{u}\\
    &=-\mu\int\sigma^m\curl u\cdot\curl\dot{u}-\mu\int_{\pa\Omega}\sigma^mn\times\curl u\dot{u}\\
    &=-\frac{\mu}{2}\left(\int\sigma^m|\curl u|^2+\int_{\pa\Omega}\sigma^mAu\cdot u\right)_t+\frac{\mu}{2}m\sigma^{m-1}\sigma'\left(\int|\curl u|^2+\int_{\pa\Omega}Au\cdot u\right)\\
    &\quad-\mu\int\sigma^m\curl u\cdot\curl(u\cdot\nabla u)-\mu\int_{\pa\Omega}Au\cdot(u\cdot\nabla u)\\
    &=-\frac{\mu}{2}\left(\int\sigma^m|\curl u|^2+\int_{\pa\Omega}\sigma^mAu\cdot u\right)_t+\frac{\mu}{2}m\sigma^{m-1}\sigma'\left(\int|\curl u|^2+\int_{\pa\Omega}Au\cdot u\right)\\
    &\quad+\frac{\mu}{2}\int\sigma^m|\curl u|^2\div u-\mu\int\sigma^m\curl u\cdot(\nabla u_i\times\nabla_i u)-\mu\int_{\pa\Omega}((Au)^{\bot}\times n)\cdot(u\cdot\nabla u)\\
    &=-\frac{\mu}{2}\left(\int\sigma^m|\curl u|^2+\int_{\pa\Omega}\sigma^mAu\cdot u\right)_t+\frac{\mu}{2}m\sigma^{m-1}\sigma'\left(\int|\curl u|^2+\int_{\pa\Omega}Au\cdot u\right)\\
    &\quad+\frac{\mu}{2}\int\sigma^m|\curl u|^2\div u-\mu\int\sigma^m\curl u\cdot(\nabla u_i\times\nabla_i u)-\mu\int_{\pa\Omega}((u\cdot\nabla u)\times (Au)^{\bot})\cdot n\\
    &\leq-\frac{\mu}{2}\left(\int\sigma^m|\curl u|^2+\int_{\pa\Omega}\sigma^mAu\cdot u\right)_t+\frac{\mu}{2}m\sigma^{m-1}\sigma'\left(\int|\curl u|^2+\int_{\pa\Omega}Au\cdot u\right)\\
    &\quad +C\sigma^m\norm{\nabla u}_{L^3}^3-\mu\int\div((u\cdot\nabla u)\times (Au)^{\bot})\\
    &\leq-\frac{\mu}{2}\left(\int\sigma^m|\curl u|^2+\int_{\pa\Omega}\sigma^mAu\cdot u\right)_t+Cm\sigma^{m-1}\sigma'\norm{\nabla u}_{L^2}^2+\frac{1}{4}\sigma^m\norm{\sqrt{\rho}\dot{u}}_{L^2}^2\\
    &\quad+C\sigma^m\norm{\nabla u}_{L^3}^3+C(\tilde{\rho})\sigma^m\norm{\nabla u}_{L^2}^4,
    \end{aligned}
  \end{equation}
  where we have used the simple fact that by \eqref{curl-effective-viscous-estimate2}, Sobolev imbedding and Poincar\'{e} inequality,
  \begin{equation*}
    \begin{aligned}
    &\quad\int\div((u\cdot\nabla u)\times (Au)^{\bot})\\
    &=\int Au^{\bot}\cdot(\nabla\times(u\cdot\nabla u))-\int(u\cdot\nabla u)\cdot(\nabla\times (Au)^{\bot})\\
    &=\int (Au)^{\bot}\cdot(u\cdot\nabla\curl u+\nabla u_i\times\nabla_i u)-\int(u\cdot\nabla u)\cdot(\nabla\times (Au)^{\bot})\\
    &\leq C\int(|u|^2|\nabla\curl u|+|u||\nabla u|^2+|u|^2|\nabla u|+|u|^3)\\
    &\leq C(\norm{\nabla\curl u}_{L^2}\norm{u}_{L^4}^2+\norm{u}_{L^3}\norm{\nabla u}_{L^3}^2+\norm{\nabla u}_{L^2}\norm{u}_{L^4}^2+\norm{u}_{L^3}^3)\\
    &\leq C(\norm{\rho\dot{u}}_{L^2}\norm{\nabla u}_{L^2}^2+\norm{\nabla u}_{L^2}^3+\norm{\nabla u}_{L^2}\norm{\nabla u}_{L^3}^2). 
    \end{aligned}
  \end{equation*}
  It follows from \eqref{A-control1}-\eqref{I_3-estimate} that
  \begin{equation}\label{A-control2}
    \begin{aligned}
    &\quad\left(\int\sigma^m(\mu|\curl u|^2+(2\mu+\lambda)|\div u|^2-2P\div u)+\mu\int_{\pa\Omega}\sigma^mAu\cdot u\right)_t+\int\sigma^m\rho|\dot{u}|^2\\
    &\leq Cm\sigma^{m-1}\sigma'(\int P|\nabla u|+\norm{\nabla u}_{L^2}^2)+C\sigma^m(\int P|\nabla u|^2+\norm{\nabla u}_{L^3}^3+\bar{P}\norm{\nabla u}_{L^2}^2)\\
    &\quad+C(\tilde{\rho})\sigma^m\norm{\nabla u}_{L^2}^4.
    \end{aligned}
  \end{equation}
  Then integrating \eqref{A-control2} over $[0,T]$, and using \eqref{essential-energy}, \eqref{a-priori-assumption}, \eqref{curl-effective-viscous-estimate1} and \eqref{essential-estimate}, we have that for any integer $m\geq1$,
  \begin{equation}\label{A-control3}
    \begin{aligned}
    &\quad\sup_{t\in[0,T]}\sigma^m\norm{\nabla u}_{L^2}^2+\int_0^T\int\sigma^m\rho|\dot{u}|^2\\
    &\leq C(\tilde{\rho})\mathcal{E}_0+C\int_0^{\sigma(T)}\int(P^2+|\nabla u|^2)+C\int_0^T\int\sigma^mP|\nabla u|^2+C\int_0^T\bar{P}\norm{\nabla u}_{L^2}^2\\
    &\quad+C\int_0^T\sigma^m\norm{\nabla u}_{L^3}^3+C(\tilde{\rho})\int_0^T\sigma^m\norm{\nabla u}_{L^2}^4\\
    &\leq C(\tilde{\rho})\mathcal{E}_0+C\mathcal{E}_0E_0+C\int_0^T\int\sigma^mP|\nabla u|^2+C\int_0^T\sigma^m\norm{\nabla u}_{L^3}^3+C(\tilde{\rho})\int_0^T\sigma^m\norm{\nabla u}_{L^2}^4, 
    \end{aligned}
  \end{equation}
  which implies \eqref{A1-control} by taking $m=1$.
  
  Now, we turn to prove \eqref{A2-control}. Recalling \eqref{momentum-equation} as
  \begin{equation}\label{momentum-equation2}
    \rho\dot{u}=\nabla G-\mu\nabla\times\curl u,
  \end{equation}
  then taking $\sigma^m\dot{u}_j[\pa_t+\div(u\cdot)]$ on the $j$-th component of $\eqref{momentum-equation2}$, summing over $j$, and integrating over $\Omega$ yields
  \begin{equation}\label{A-control-1}
    \begin{aligned}
    &\quad\left(\frac{1}{2}\int\sigma^m\rho|\dot{u}|^2\right)_t-\frac{1}{2}m\sigma^{m-1}\sigma'\int\rho|\dot{u}|^2\\
    &=\int\sigma^m(\dot{u}\cdot\nabla G_t+\dot{u}_j\div(u\pa_jG))+\mu\int\sigma^m(-\dot{u}\cdot\nabla\times\curl u_t-\dot{u}_j\div(u(\nabla\times\curl u)_j))\\
    &=J_1+\mu J_2.
    \end{aligned}
  \end{equation}
  For $J_1$, by virtue of \eqref{Navier-slip-condition} and \eqref{Pressure-eq}, we have
  \begin{equation}\label{J_1-estimate1}
    \begin{aligned}
    J_1&=\int\sigma^m\dot{u}\cdot\nabla G_t+\int\sigma^m\dot{u}_j\div(u\pa_jG)\\
    &=\int_{\pa\Omega}\sigma^mG_t\dot{u}\cdot n-\int\sigma^mF_t\div\dot{u}-\int\sigma^mu\cdot\nabla\dot{u}\cdot\nabla G\\
    &=\int_{\pa\Omega}\sigma^mG_t\dot{u}\cdot n-(2\mu+\lambda)\int\sigma^m|\div\dot{u}|^2+(2\mu+\lambda)\int\sigma^m\div\dot{u}\nabla u:\nabla u^T\\
    &\quad+\int\sigma^m\div\dot{u}u\cdot\nabla G-\gamma\int\sigma^mP\div u\div\dot{u}+(\gamma-1)\overline{P\div u}\int\sigma^m\div\dot{u}-\int\sigma^mu\cdot\nabla\dot{u}\cdot\nabla G\\
    &\leq\int_{\pa\Omega}\sigma^mG_t\dot{u}\cdot n-(2\mu+\lambda)\int\sigma^m|\div\dot{u}|^2+\delta\sigma^m\norm{\nabla\dot{u}}_{L^2}^2\\
    &\quad+C(\delta)\sigma^m(\norm{\nabla u}_{L^2}^2\norm{\nabla G}_{L^3}^2+\norm{\nabla u}_{L^4}^4+\norm{P|\nabla u|}_{L^2}^2),
    \end{aligned}
  \end{equation}
  where we have used
  \begin{equation*}
    \begin{aligned}
    F_t&=(2\mu+\lambda)\div u_t-(P_t-\bar{P}_t)\\
    &=(2\mu+\lambda)\div\dot{u}-(2\mu+\lambda)\div(u\cdot\nabla u)+u\cdot\nabla P+\gamma P\div u-(\gamma-1)\overline{P\div u}\\
    &=(2\mu+\lambda)\div\dot{u}-(2\mu+\lambda)\nabla u:\nabla u^T-u\cdot\nabla G+\gamma P\div u-(\gamma-1)\overline{P\div u}.
    \end{aligned}
  \end{equation*}
  For the boundary term in \eqref{A-control-1}, we have
  \begin{equation}\label{boundary-term-A2-1}
    \begin{aligned}
    &\quad\int_{\pa\Omega}\sigma^mG_t\dot{u}\cdot n
    =-\int_{\pa\Omega}\sigma^mG_tu\cdot\nabla n\cdot u\\
    &=-\left(\int_{\pa\Omega}\sigma^mG(u\cdot\nabla n\cdot u)\right)_t+m\sigma^{m-1}\sigma'\int_{\pa\Omega}G(u\cdot\nabla n\cdot u)\\
    &\quad+\sigma^m\int_{\pa\Omega}G(\dot{u}\cdot\nabla n\cdot u+u\cdot\nabla n\cdot\dot{u})-\sigma^m\int_{\pa\Omega}G((u\cdot\nabla u)\cdot\nabla n\cdot u+u\cdot\nabla n\cdot(u\cdot\nabla u))\\
    &\leq-\left(\int_{\pa\Omega}\sigma^mG(u\cdot\nabla n\cdot u)\right)_t+Cm\sigma^{m-1}\sigma'\norm{\nabla u}_{L^2}^2\norm{G}_{H^1}+C\sigma^m\norm{G}_{H^1}\norm{\dot{u}}_{H^1}\norm{\nabla u}_{L^2}\\
    &\quad-\sigma^m\int_{\pa\Omega}G((u\cdot\nabla u)\cdot\nabla n\cdot u+u\cdot\nabla n\cdot(u\cdot\nabla u)),
    \end{aligned}
  \end{equation}
  where we have used the similar technical argument in \eqref{boundary-term-A1-1} to derive that
  \begin{equation}\label{boundary-term-A2-transition1}
  \begin{aligned}
    &\quad\int_{\pa\Omega}G(\dot{u}\cdot\nabla n\cdot u+u\cdot\nabla n\cdot\dot{u})=\int_{\pa\Omega}Gu\cdot(\nabla n+\nabla n^T)\cdot\dot{u}\\
    &=\int_{\pa\Omega}(u^{\bot}\times n)\cdot(\nabla n+\nabla n^T)\cdot\dot{u}G=\int_{\pa\Omega}((\nabla n+\nabla n^T)\cdot\dot{u}G\times u^{\bot})\cdot n\\
    &=\int\div((\nabla n+\nabla n^T)\cdot\dot{u}G\times u^{\bot})\\
    &\leq C\int(|\nabla G||\dot{u}||u|+|G|(|\nabla\dot{u}||u|+|\dot{u}||u|+|\dot{u}||\nabla u|)\\
    &\leq C\norm{G}_{H^1}\norm{\dot{u}}_{H^1}\norm{u}_{H^1}\leq C\norm{G}_{H^1}\norm{\dot{u}}_{H^1}\norm{\nabla u}_{L^2}, 
  \end{aligned}
  \end{equation}
  and
  \begin{equation}\label{boundary-term-A2-transition2}
    \left|\int_{\pa\Omega}G(u\cdot\nabla n\cdot u)\right|\leq C\norm{G}_{H^1}\norm{u}_{H^1}^2\leq C\norm{G}_{H^1}\norm{\nabla u}_{L^2}^2.
  \end{equation}
  Similar to the proof of \eqref{boundary-term-A1-1}, we have from Lemma \ref{lem-curl-effective-viscous}, Sobolev imbedding, Poincar\'{e} inequality \eqref{Poincare-inequality} and Lemma \ref{lem-dot-u} that
  \begin{equation}\label{boundary-term-A2-2}
    \begin{aligned}
    &\quad-\int_{\pa\Omega}G(u\cdot\nabla u)\cdot\nabla n\cdot u=-\int_{\pa\Omega}u^{\bot}\times n\cdot\nabla u_i\nabla_in\cdot uG\\
    &=\int_{\pa\Omega}(u^{\bot}\times\nabla u_i\nabla_in\cdot uG)\cdot n=\int\div(u^{\bot}\times\nabla u_i\nabla_in\cdot uG)\\
    &=\int(u^{\bot}\times\nabla u_i)\cdot\nabla(\nabla_in\cdot uG)+\int(\nabla\times u^{\bot})\cdot\nabla u_i\nabla_in\cdot uG\\
    &\leq C\int|\nabla G||u|^2|\nabla u|+C\int|G|(|\nabla u|^2|u|+|\nabla u||u|^2)\\
    &\leq C\norm{\nabla G}_{L^4}\norm{\nabla u}_{L^4}\norm{u}_{L^4}^2+C\norm{G}_{L^3}\norm{\nabla u}_{L^4}^2\norm{u}_{L^6}+C\norm{G}_{L^6}\norm{\nabla u}_{L^2}\norm{u}_{L^6}^2\\
    &\leq C\norm{\nabla G}_{L^4}\norm{\nabla u}_{L^4}\norm{\nabla u}_{L^2}^2+C\norm{\nabla G}_{L^2}\norm{\nabla u}_{L^4}^2\norm{\nabla u}_{L^2}\\
    &\leq2\delta\norm{\nabla\dot{u}}_{L^2}^2+C(\norm{\nabla u}_{L^4}^4+\norm{P-\bar{P}}_{L^4}^4)+C(\delta,\tilde{\rho})(\norm{\nabla u}_{L^2}^8+\norm{\sqrt{\rho}\dot{u}}_{L^2}^2\norm{\nabla u}_{L^2}^4),
    \end{aligned}
  \end{equation}
  where we have used the simple fact that
  \begin{equation*}
    \div(a\times b)=(\nabla\times a)\cdot b-(\nabla\times b)\cdot a,
  \end{equation*}
  and the following estimates as
  \begin{equation}\label{boundary-term-A2-2-transition1}
    \begin{aligned}
    &\quad\norm{\nabla G}_{L^4}\norm{\nabla u}_{L^4}\norm{\nabla u}_{L^2}^2\\
    &\leq C(\norm{\rho\dot{u}}_{L^4}+\norm{\nabla u}_{L^2}+\norm{P-\bar{P}}_{L^4})\norm{\nabla u}_{L^4}\norm{\nabla u}_{L^2}^2\\
    &\leq C(\tilde{\rho})(\norm{\nabla\dot{u}}_{L^2}+\norm{\nabla u}_{L^2}^2)\norm{\nabla u}_{L^4}\norm{\nabla u}_{L^2}^2+C(\norm{\nabla u}_{L^2}+\norm{P-\bar{P}}_{L^4})\norm{\nabla u}_{L^4}\norm{\nabla u}_{L^2}^2\\
    &\leq\delta\norm{\nabla\dot{u}}_{L^2}^2+C\norm{\nabla u}_{L^4}^4+C(\delta,\tilde{\rho})\norm{\nabla u}_{L^2}^8+C\norm{P-\bar{P}}_{L^4}^4
    \end{aligned}
  \end{equation}
  and
  \begin{equation}\label{boundary-term-A2-2-transition2}
    \begin{aligned}
    &\quad\norm{\nabla G}_{L^2}\norm{\nabla u}_{L^4}^2\norm{\nabla u}_{L^2}\\
    &\leq C(\norm{\rho\dot{u}}_{L^2}+\norm{\nabla u}_{L^2})\norm{\nabla u}_{L^4}^2\norm{\nabla u}_{L^2}\\
    &\leq C\norm{\nabla u}_{L^4}^4+C\norm{\rho\dot{u}}_{L^2}^2\norm{\nabla u}_{L^2}^2\\
    &\leq C\norm{\nabla u}_{L^4}^4+C(\tilde{\rho})\norm{\rho\dot{u}}_{L^2}\norm{\dot{u}}_{L^6}\norm{\nabla u}_{L^2}^2\\
    &\leq C\norm{\nabla u}_{L^4}^4+C(\tilde{\rho})\norm{\sqrt{\rho}\dot{u}}_{L^2}(\norm{\nabla\dot{u}}_{L^2}+\norm{\nabla u}_{L^2}^2)\norm{\nabla u}_{L^2}^2\\
    &\leq\delta\norm{\nabla\dot{u}}_{L^2}^2+C\norm{\nabla u}_{L^4}^4+C(\delta,\tilde{\rho})\norm{\sqrt{\rho}\dot{u}}_{L^2}^2\norm{\nabla u}_{L^2}^4.
    \end{aligned}
  \end{equation}
  Similarly, we have
  \begin{equation}\label{boundary-term-A2-3}
    \begin{aligned}
     &\quad-\int_{\pa\Omega}Gu\cdot\nabla n\cdot(u\cdot\nabla u)\\
     &\leq 2\delta\norm{\nabla\dot{u}}_{L^2}^2+C(\norm{\nabla u}_{L^4}^4+\norm{P-\bar{P}}_{L^4}^4)+C(\delta,\tilde{\rho})(\norm{\nabla u}_{L^2}^8+\norm{\sqrt{\rho}\dot{u}}_{L^2}^2\norm{\nabla u}_{L^2}^4).
    \end{aligned}
  \end{equation}
  Indeed, from Lemma \ref{lem-curl-effective-viscous} and Lemma \ref{lem-dot-u}, we get
  \begin{equation}\label{A-control-transition1}
    \norm{G}_{H^1}+\norm{\curl u}_{H^1}\leq C(\norm{\rho\dot{u}}_{L^2}+\norm{\nabla u}_{L^2}),
  \end{equation}
  and for any $p\in[2,6]$,
  \begin{equation}\label{A-control-transition2}
    \begin{aligned}
    \norm{\nabla G}_{L^p}+\norm{\nabla\curl u}_{L^p}&\leq C(\norm{\rho\dot{u}}_{L^p}+\norm{\nabla u}_{L^2}+\norm{P-\bar{P}}_{L^p})\\
    &\leq C(\tilde{\rho})\norm{\dot{u}}_{L^6}+C(\norm{\nabla u}_{L^2}+\norm{P-\bar{P}}_{L^p})\\
    &\leq C(\tilde{\rho})(\norm{\nabla\dot{u}}_{L^2}+\norm{\nabla u}_{L^2}^2)+C(\norm{\nabla u}_{L^2}+\norm{P-\bar{P}}_{L^p}).
    \end{aligned}
  \end{equation}
  Consequently, it holds that
  \begin{equation}\label{A2-estimate-transition1}
    \begin{aligned}
    &\quad\norm{\nabla u}_{L^2}^2\norm{\nabla G}_{L^3}^2\\
    &\leq C\norm{\nabla u}_{L^2}^2(\norm{\rho\dot{u}}_{L^3}^2+\norm{\nabla u}_{L^2}^2+\norm{P-\bar{P}}_{L^3}^2)\\ 
    &\leq C(\tilde{\rho})\norm{\nabla u}_{L^2}^2\norm{\sqrt{\rho}\dot{u}}_{L^2}\norm{\dot{u}}_{L^6}+C(\norm{\nabla u}_{L^2}^4+\norm{P-\bar{P}}_{L^3}^4)\\
    &\leq C(\tilde{\rho})\norm{\nabla u}_{L^2}^2\norm{\sqrt{\rho}\dot{u}}_{L^2}(\norm{\nabla\dot{u}}_{L^2}+\norm{\nabla u}_{L^2}^2)+C(\norm{\nabla u}_{L^2}^4+\norm{P-\bar{P}}_{L^3}^4)\\
    &\leq\delta\norm{\nabla\dot{u}}_{L^2}^2+C(\delta,\tilde{\rho})\norm{\sqrt{\rho}\dot{u}}_{L^2}^2\norm{\nabla u}_{L^2}^4+C(\norm{\nabla u}_{L^2}^4+\norm{P-\bar{P}}_{L^3}^4)
    \end{aligned}
  \end{equation}
  and
  \begin{equation}\label{A2-estimate-transition2}
    \begin{aligned}
    &\quad\norm{G}_{H^1}\norm{\dot{u}}_{H^1}\norm{\nabla u}_{L^2}\\
    &\leq C(\norm{\rho\dot{u}}_{L^2}+\norm{\nabla u}_{L^2})(\norm{\nabla\dot{u}}_{L^2}+\norm{\nabla u}_{L^2}^2)\norm{\nabla u}_{L^2}\\
    &\leq\delta\norm{\nabla\dot{u}}_{L^2}^2+C(\delta)\norm{\rho\dot{u}}_{L^2}^2\norm{\nabla u}_{L^2}^2+C(\delta)\norm{\nabla u}_{L^2}^4\\
    &\leq\delta\norm{\nabla\dot{u}}_{L^2}^2+C(\delta,\tilde{\rho})\norm{\rho\dot{u}}_{L^2}\norm{\dot{u}}_{L^6}\norm{\nabla u}_{L^2}^2+C(\delta)\norm{\nabla u}_{L^2}^4\\
    &\leq2\delta\norm{\nabla\dot{u}}_{L^2}^2+C(\delta,\tilde{\rho})\norm{\sqrt{\rho}\dot{u}}_{L^2}^2\norm{\nabla u}_{L^2}^4+C(\delta)\norm{\nabla u}_{L^2}^4. 
    \end{aligned}
  \end{equation}
  Then combining \eqref{J_1-estimate1}, \eqref{boundary-term-A2-1}, \eqref{boundary-term-A2-2}, \eqref{boundary-term-A2-3}, \eqref{A2-estimate-transition1} and \eqref{A2-estimate-transition2}, we obtain
  \begin{equation}\label{J_1-estimate2}
    \begin{aligned}
    J_1&\leq-\left(\int_{\pa\Omega}\sigma^mG(u\cdot\nabla n\cdot u)\right)_t+Cm\sigma^{m-1}\sigma'(\norm{\nabla u}_{L^2}^2(\norm{\sqrt{\rho}\dot{u}}_{L^2}^2+C(\tilde{\rho}))+\norm{\nabla u}_{L^2}^4)\\
    &\quad-(2\mu+\lambda)\int\sigma^m|\div\dot{u}|^2+C\delta\sigma^m\norm{\nabla\dot{u}}_{L^2}^2
    +C\sigma^m(\norm{\nabla u}_{L^2}^4+\norm{P-\bar{P}}_{L^4}^4)\\
    &\quad+C(\delta,\tilde{\rho})\sigma^m(\norm{\sqrt{\rho}\dot{u}}_{L^2}^2\norm{\nabla u}_{L^2}^4+\norm{\nabla u}_{L^2}^8)+C(\delta)\sigma^m(\norm{\nabla u}_{L^4}^4+\norm{P|\nabla u|}_{L^2}^2). 
    \end{aligned}
  \end{equation}
  Similarly, for $J_2$, we have
  \begin{equation}\label{J_2-estimate1}
    \begin{aligned}
    J_2&=\int\sigma^m(-\dot{u}\cdot\nabla\times\curl u_t-\dot{u}_j\div(u(\nabla\times\curl u)_j))\\
    &=-\int\sigma^m|\curl\dot{u}|^2+\int\sigma^m\curl\dot{u}\cdot(\nabla u_i\times\nabla_iu)+\int\sigma^mu\cdot\nabla\curl u\cdot\curl\dot{u}\\
    &\quad+\int_{\pa\Omega}\sigma^m\curl u_t\times n\cdot\dot{u}+\int\sigma^mu\cdot\nabla\dot{u}\cdot(\nabla\times\curl u)\\
    &\leq-\int\sigma^m|\curl\dot{u}|^2-\int_{\pa\Omega}\sigma^mA\dot{u}\cdot\dot{u}
    +\delta\sigma^m\norm{\nabla\dot{u}}_{L^2}^2+C(\delta)\sigma^m(\norm{\nabla u}_{L^4}^4+\norm{u|\nabla\curl u|}_{L^2}^2)\\
    &\quad+C\sigma^m(\norm{\dot{u}}_{L^6}\norm{\nabla u}_{L^2}\norm{u}_{L^3}+\norm{\nabla u}_{L^4}^2\norm{\dot{u}}_{L^2}+\norm{u}_{L^3}\norm{\nabla u}_{L^2}\norm{\dot{u}}_{L^6})\\
    &\leq-\int\sigma^m|\curl\dot{u}|^2-\int_{\pa\Omega}\sigma^mA\dot{u}\cdot\dot{u}
    +3\delta\sigma^m\norm{\nabla\dot{u}}_{L^2}^2+C(\delta)\sigma^m\norm{\nabla u}_{L^4}^4\\
    &\quad+C(\delta,\tilde{\rho})\sigma^m\norm{\sqrt{\rho}\dot{u}}_{L^2}^2\norm{\nabla u}_{L^2}^4,
    \end{aligned}
  \end{equation}
  where we have applied the facts that
  \begin{equation*}
    \curl u_t=\curl\dot{u}-u\cdot\nabla\curl u-\nabla u_i\times\nabla_iu
  \end{equation*}
  and
  \begin{equation}\label{boundary-term-A2-4}
    \begin{aligned}
    &\quad\int_{\pa\Omega}\curl u_t\times n\cdot\dot{u}=-\int_{\pa\Omega}Au_t\cdot\dot{u}\\
    &=-\int_{\pa\Omega}A\dot{u}\cdot\dot{u}+\int_{\pa\Omega}(u\cdot\nabla u)\cdot A\cdot\dot{u}\\
    &=-\int_{\pa\Omega}A\dot{u}\cdot\dot{u}+\int_{\pa\Omega}(u^{\bot}\times n\cdot\nabla u)\cdot A\dot{u}\\
    &=-\int_{\pa\Omega}A\dot{u}\cdot\dot{u}+\int_{\pa\Omega}(\nabla u_i(A\dot{u})_i\times u^{\bot})\cdot n\\
    &=-\int_{\pa\Omega}A\dot{u}\cdot\dot{u}+\int\div(\nabla u_i(A\dot{u})_i\times u^{\bot})\\
    &=-\int_{\pa\Omega}A\dot{u}\cdot\dot{u}+\int\nabla\times(\nabla u_i(A\dot{u})_i)\cdot u^{\bot}-\int\nabla\times u^{\bot}\cdot\nabla u_i(A\dot{u})_i\\
    &=-\int_{\pa\Omega}A\dot{u}\cdot\dot{u}+\int\nabla(A\dot{u})_i\times\nabla u_i\cdot u^{\bot}-\int\nabla\times u^{\bot}\cdot\nabla u_i(A\dot{u})_i,
    \end{aligned}
  \end{equation}
  and the simple estimate from Lemma \ref{lem-curl-effective-viscous} and Lemma \ref{lem-dot-u} 
  \begin{equation*}
    \begin{aligned}
    \norm{u|\nabla\curl u|}_{L^2}^2&\leq\norm{u}_{L^6}^2\norm{\nabla\curl u}_{L^3}^2\leq C\norm{\nabla u}_{L^2}^2(\norm{\rho\dot{u}}_{L^3}^2+\norm{\nabla u}_{L^2}^2+\norm{P-\bar{P}}_{L^3}^2)\\
    &\leq C\norm{\nabla u}_{L^2}^2(\norm{\rho\dot{u}}_{L^2}\norm{\rho\dot{u}}_{L^6}+\norm{\nabla u}_{L^2}^2+\norm{P-\bar{P}}_{L^3}^2)\\
    &\leq C(\tilde{\rho})\norm{\nabla u}_{L^2}^2\norm{\rho\dot{u}}_{L^2}(\norm{\nabla\dot{u}}_{L^2}+\norm{\nabla u}_{L^2}^2)+C(\norm{\nabla u}_{L^2}^4+\norm{P-\bar{P}}_{L^3}^4)\\
    &\leq\delta\norm{\nabla\dot{u}}_{L^2}^2+C(\delta,\tilde{\rho})\norm{\sqrt{\rho}\dot{u}}_{L^2}^2\norm{\nabla u}_{L^2}^4+C(\norm{\nabla u}_{L^2}^4+\norm{P-\bar{P}}_{L^3}^4). 
    \end{aligned}
  \end{equation*}
  Therefore, combining \eqref{A-control-1}, \eqref{J_1-estimate2} and \eqref{J_2-estimate1} gives that
  \begin{equation}\label{A-control-2}
    \begin{aligned}
    &\quad\left(\int\sigma^m\rho|\dot{u}|^2+2\int_{\pa\Omega}\sigma^mu\cdot\nabla n\cdot uG\right)_t+2\sigma^m\int(\mu|\curl\dot{u}|^2+(2\mu+\lambda)|\div\dot{u}|^2)\\
    &\leq Cm\sigma^{m-1}\sigma'(\norm{\nabla u}_{L^2}^2(\norm{\sqrt{\rho}\dot{u}}_{L^2}^2+C(\tilde{\rho}))+\norm{\sqrt{\rho}\dot{u}}_{L^2}^2+\norm{\nabla u}_{L^2}^4)+C\delta\sigma^m\norm{\nabla\dot{u}}_{L^2}^2
    \\
    &\quad+C(\delta)\sigma^m(\norm{\nabla u}_{L^4}^4+\norm{P|\nabla u|}_{L^2}^2)+C(\delta,\tilde{\rho})\sigma^m(\norm{\nabla u}_{L^2}^8+\norm{\sqrt{\rho}\dot{u}}_{L^2}^2\norm{\nabla u}_{L^2}^4)\\
    &\quad+C(\norm{\nabla u}_{L^2}^4+\norm{P-\bar{P}}_{L^4}^4). 
    \end{aligned}
  \end{equation}
  Thus, using Lemma \ref{lem-dot-u} and choosing $\delta>0$ sufficiently small yields that
  \begin{equation}\label{A-control-3}
    \begin{aligned}
    &\quad\left(\int\sigma^m\rho|\dot{u}|^2+2\int_{\pa\Omega}\sigma^mu\cdot\nabla n\cdot uG\right)_t+\sigma^m\int(\mu|\curl\dot{u}|^2+(2\mu+\lambda)|\div\dot{u}|^2)\\
    &\leq Cm\sigma^{m-1}\sigma'(\norm{\nabla u}_{L^2}^2(\norm{\sqrt{\rho}\dot{u}}_{L^2}^2+C(\tilde{\rho}))+\norm{\sqrt{\rho}\dot{u}}_{L^2}^2+\norm{\nabla u}_{L^2}^4)
    \\
    &\quad+C\sigma^m(\norm{\nabla u}_{L^4}^4+\norm{P|\nabla u|}_{L^2}^2+\norm{P-\bar{P}}_{L^4}^4)+C(\tilde{\rho})\sigma^m(\norm{\nabla u}_{L^2}^8+\norm{\sqrt{\rho}\dot{u}}_{L^2}^2\norm{\nabla u}_{L^2}^4).
    \end{aligned}
  \end{equation}
  Integrating the above inequality over $[0,T]$, taking $m=3$ and using \eqref{a-priori-assumption}, \eqref{essential-energy}, \eqref{essential-estimate}, \eqref{boundary-term-A2-transition2} and Lemma \ref{lem-dot-u}, we have
  \begin{equation}\label{A-control-4}
    \begin{aligned}
    &\quad\sup_{t\in[0,T]}\sigma^3\int\rho|\dot{u}|^2+\int_0^T\int\sigma^3|\nabla\dot{u}|^2\\
    &\leq C\sup_{t\in[0,T]}\sigma^3\norm{G}_{H^1}\norm{\nabla u}_{L^2}^2+C\int_0^{\sigma(T)}\sigma^2(\norm{\nabla u}_{L^2}^2(\norm{\sqrt{\rho}\dot{u}}_{L^2}^2+C(\tilde{\rho}))+\norm{\sqrt{\rho}\dot{u}}_{L^2}^2+\norm{\nabla u}_{L^2}^4)\\
    &\quad+C\int_0^T\sigma^3(\norm{\nabla u}_{L^4}^4+\norm{P|\nabla u|}_{L^2}^2+\norm{P-\bar{P}}_{L^4}^4)+C(\tilde{\rho})\int_0^T\sigma^3(\norm{\nabla u}_{L^2}^8+\norm{\sqrt{\rho}\dot{u}}_{L^2}^2\norm{\nabla u}_{L^2}^4)\\
    &\leq \sup_{t\in[0,T]}\sigma^3(\frac{1}{4}\norm{\sqrt{\rho}\dot{u}}_{L^2}^2+C\norm{\nabla u}_{L^2}^3+C(\tilde{\rho})\norm{\nabla u}_{L^2}^4)+C(\tilde{\rho})\mathcal{E}_0+CA_1^2(\sigma(T))+CA_1(\sigma(T))\\
    &\quad+C(\tilde{\rho})A_1(\sigma(T))\mathcal{E}_0+C(\tilde{\rho})A_1^3(T)(1+E_0)+C\int_0^T\sigma^3(\norm{\nabla u}_{L^4}^4+\norm{P|\nabla u|}_{L^2}^2+\norm{P-\bar{P}}_{L^4}^4),
    \end{aligned}
  \end{equation}
  which implies
  \begin{equation}\label{A-control-5}
    \begin{aligned}
    A_2(T)&\leq CA_1^{\frac{3}{2}}(T)+C(\tilde{\rho})A_1^2(T)+C(\tilde{\rho})\mathcal{E}_0+CA_1^2(\sigma(T))+CA_1(\sigma(T))
    +C(\tilde{\rho})A_1(\sigma(T))\mathcal{E}_0\\
    &\quad+C(\tilde{\rho})A_1^3(T)(1+E_0)+C\int_0^T\sigma^3(\norm{\nabla u}_{L^4}^4+\norm{P|\nabla u|}_{L^2}^2+\norm{P-\bar{P}}_{L^4}^4)\\
    &\leq C(\tilde{\rho})\mathcal{E}_0+CA_1^{\frac{3}{2}}(T)(1+C(\tilde{\rho})A_1(T))^{\frac{1}{2}}+CA_1(\sigma(T))+C(\tilde{\rho})A_1^3(T)(E_0+1)\\
    &\quad+C\int_0^T\sigma^3(\norm{\nabla u}_{L^4}^4+\norm{P|\nabla u|}_{L^2}^2+\norm{P-\bar{P}}_{L^4}^4)
    \end{aligned}
  \end{equation}
  provided $\mathcal{E}_0\leq1$. Thus, we complete the proof of Lemma \ref{lem-A1-A2-control}.    
\end{proof}

\begin{lem}\label{lem-small-time-estimate}
  Under the conditions of Proposition \ref{prop-a-priori-estimate}, it holds that
  \begin{equation}\label{small-time-estimate1}
    \sup_{t\in[0,\sigma(T)]}\int|\nabla u|^2+\int_0^{\sigma(T)}\int\rho|\dot{u}|^2\leq C(\tilde{\rho}, M),
  \end{equation}
  \begin{equation}\label{small-time-estimate2}
    \sup_{t\in[0,\sigma(T)]}t\int\rho|\dot{u}|^2+\int_0^{\sigma(T)}\int t|\nabla\dot{u}|^2\leq C(\tilde{\rho}, M),
  \end{equation}
  \begin{equation}\label{small-time-estimate3}
    \sup_{t\in[0,T]}\int|\nabla u|^2+\int_0^{T}\int\rho|\dot{u}|^2\leq C(\tilde{\rho}, M),
  \end{equation}
  \begin{equation}\label{small-time-estimate4}
    \sup_{t\in[0,T]}\sigma\int\rho|\dot{u}|^2+\int_0^{T}\int \sigma|\nabla\dot{u}|^2\leq C(\tilde{\rho}, M),
  \end{equation}
  provided 
\begin{equation}\label{small-assumption1}
  \mathcal{E}_0\leq\epsilon_2=\min\{\epsilon_1,(4C(\tilde{\rho}))^{-12})\}
\end{equation}
with $\epsilon_1$ defined in Lemma \ref{lem-A1-A2-control}.
\end{lem}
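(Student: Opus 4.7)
The strategy is to test the momentum equation against $\dot u$ to produce an energy-type estimate with $\int \rho|\dot u|^2$ on the dissipation side, and then close it by Grönwall using the a priori assumption and the essential energy bounds (Lemmas~\ref{lem-essential-energy}, \ref{lem-essential-estimate}). The computation is essentially the one carried out in \eqref{A-control1}--\eqref{A-control3} in the proof of Lemma \ref{lem-A1-A2-control}, but with unweighted or lightly weighted time factors.

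For \eqref{small-time-estimate1}, I would specialize \eqref{A-control2} to $m=0$, yielding
\begin{equation*}
\tfrac{d}{dt}\Psi(t) + \int \rho|\dot u|^2 \leq C\!\int P|\nabla u|^2 + C\|\nabla u\|_{L^3}^3 + C\bar P\,\|\nabla u\|_{L^2}^2 + C(\tilde\rho)\|\nabla u\|_{L^2}^4,
\end{equation*}
where $\Psi(t) := \mu\|\curl u\|_{L^2}^2 + (2\mu+\lambda)\|\div u\|_{L^2}^2 - 2\!\int\! P\div u + \mu\!\int_{\pa\Omega} Au\cdot u$ is, up to absorbable lower-order terms, comparable to $\|\nabla u\|_{L^2}^2$ by \eqref{curl-effective-viscous-estimate1}; in particular $\Psi(0)\leq C(\tilde\rho, M)$. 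On $[0,\sigma(T)]$, Lemma \ref{lem-essential-estimate} gives $\int_0^{\sigma(T)}\!\int P|\nabla u|^2 \leq C(\tilde\rho)\mathcal{E}_0$, while $\|\nabla u\|_{L^3}^3$ is split by Gagliardo--Nirenberg and \eqref{curl-effective-viscous-estimate5} as $C(\tilde\rho)\|\nabla u\|_{L^2}^{3/2}(\|\sqrt\rho\dot u\|_{L^2}+\|\nabla u\|_{L^2})^{3/2}$, producing terms of the form $\tfrac{1}{2}\|\sqrt\rho\dot u\|_{L^2}^2 + C(\tilde\rho)\varphi(t)^3$ with $\varphi := \|\nabla u\|_{L^2}^2$. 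Grönwall's inequality applied to $\varphi$ on $[0,\sigma(T)]$ then yields \eqref{small-time-estimate1}, provided $\mathcal{E}_0$ is small enough to keep the growth factor under control.

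For \eqref{small-time-estimate2}, I would use \eqref{A-control-3} with $m=1$, integrate on $[0,\sigma(T)]$, and rely on \eqref{small-time-estimate1} to treat intermediate terms. The boundary term $\int_{\pa\Omega}\sigma G(u\cdot\nabla n\cdot u)$ is handled by \eqref{boundary-term-A2-transition2} and Cauchy--Schwarz to give a fraction of $\|\sqrt\rho\dot u\|_{L^2}^2$ absorbable on the left. The quartic $\|\nabla u\|_{L^4}^4$ is handled via the refined bound \eqref{nabla-u-L^4-estimate2} (available under the smallness of $A$ through $\hat\epsilon$), which reduces it to $C\|P-\bar P\|_{L^4}^4 + C(\|\nabla u\|_{L^2}+\|P-\bar P\|_{L^2})\|\sqrt\rho\dot u\|_{L^2}^3$; the cubic in $\|\sqrt\rho\dot u\|_{L^2}$ is then absorbed using \eqref{small-time-estimate1} and Young's inequality. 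Another application of Grönwall delivers \eqref{small-time-estimate2}.

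Finally, \eqref{small-time-estimate3} and \eqref{small-time-estimate4} follow by splitting $[0,T] = [0,\sigma(T)]\cup[\sigma(T),T]$: on $[\sigma(T),T]$ one has $\sigma\equiv 1$, so the a priori assumption \eqref{a-priori-assumption} immediately gives $\|\nabla u\|_{L^2}^2 \leq A_1(T)\leq 2\mathcal{E}_0^{3/8}\leq 2$ and $\|\sqrt\rho\dot u\|_{L^2}^2\leq A_2(T)\leq 2\mathcal{E}_0^{1/2}\leq 2$, together with the dissipation pieces $\int_{\sigma(T)}^T\!\int\rho|\dot u|^2\leq A_1(T)$ and $\int_{\sigma(T)}^T\!\int|\nabla\dot u|^2\leq A_2(T)$; combining with \eqref{small-time-estimate1}--\eqref{small-time-estimate2} finishes the proof.

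\textbf{Main obstacle.} The delicate point is closing the Grönwall inequality for \eqref{small-time-estimate1} without smallness of $E_0$: to absorb the cubic and quartic nonlinearities in $\nabla u$ one iteratively invokes \eqref{curl-effective-viscous-estimate5}--\eqref{curl-effective-viscous-estimate6} and picks up small factors of $\mathcal{E}_0$ from Lemma \ref{lem-essential-estimate}; tracking these factors forces the high-power smallness $\mathcal{E}_0\leq (4C(\tilde\rho))^{-12}$ in \eqref{small-assumption1}. The presence of the Navier boundary contribution $\int_{\pa\Omega}Au\cdot u$ and the inhomogeneous $\curl u\times n$ condition complicates the coercivity of $\Psi$, but is controlled by the smallness of $\|A\|_{W^{1,6}}\leq\hat\epsilon$ through \eqref{nabla-u-L^4-estimate2}.
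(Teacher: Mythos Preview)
Your plan for \eqref{small-time-estimate1} has a genuine gap. After Young's inequality you obtain an inequality of the form
\[
\Psi'(t)+\tfrac12\|\sqrt\rho\,\dot u\|_{L^2}^2\le C(\tilde\rho)\,\varphi(t)^3+C(\tilde\rho)\,\varphi(t)^2+\text{(terms integrable by Lemma \ref{lem-essential-estimate})},
\qquad \varphi=\|\nabla u\|_{L^2}^2,
\]
and you then invoke ``Gr\"onwall on $\varphi$.'' But the standard Gr\"onwall lemma is linear; to apply it you must write $\varphi^3=\varphi^2\cdot\varphi$ and control the factor $\int_0^{\sigma(T)}\varphi^2$, which is \emph{not} bounded by $\mathcal E_0$ through Lemma~\ref{lem-essential-estimate} (only $\int_0^{\sigma(T)}\varphi\le C(\tilde\rho)\mathcal E_0$ is). A nonlinear comparison argument for $\varphi'\le C(\tilde\rho)\varphi^3$ can blow up before time~$1$ because $\varphi(0)\le M^2$ is \emph{not} small. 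One can rescue this via a continuation/bootstrap argument (assume $\sup_{[0,t]}\varphi\le K$, use $\int\varphi^2\le K\int\varphi\le KC(\tilde\rho)\mathcal E_0$ in Gr\"onwall, and close for $\mathcal E_0$ small), but you do not mention this, and in any case the resulting smallness threshold would depend on $M$---it does \emph{not} produce the $M$-independent bound $\mathcal E_0\le (4C(\tilde\rho))^{-12}$ you assert.

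The paper proceeds differently and avoids this difficulty entirely: it tests the momentum equation against $u_t$ (not $\dot u$), which produces the cross term $\int\rho\,\dot u\cdot(u\cdot\nabla u)$ rather than a pure $\|\nabla u\|_{L^3}^3$. This term is estimated by
\[
C(\tilde\rho)\,\|\sqrt\rho\,\dot u\|_{L^2}\,\|\rho^{1/3}u\|_{L^3}\,\|\nabla u\|_{L^6}
\le C(\tilde\rho)\,A_3^{1/3}(\sigma(T))\,\|\sqrt\rho\,\dot u\|_{L^2}^2+\ldots,
\]
so the a~priori assumption $A_3(\sigma(T))\le 2\mathcal E_0^{1/4}$ allows direct absorption of $\|\sqrt\rho\,\dot u\|_{L^2}^2$ into the left side provided $C(\tilde\rho)(2\mathcal E_0^{1/4})^{1/3}\le\frac14$, i.e.\ $\mathcal E_0\le(4C(\tilde\rho))^{-12}$. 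This is precisely where the exponent $-12$ comes from; it is tied to the use of $A_3$, which your scheme never invokes. After that absorption, the remaining terms integrate over $[0,\sigma(T)]$ via Lemma~\ref{lem-essential-estimate} with no nonlinear Gr\"onwall needed. For \eqref{small-time-estimate2} the paper also handles $\int_0^{\sigma(T)}\sigma\|\nabla u\|_{L^4}^4$ by the interpolation $\|\nabla u\|_{L^4}^4\le\|\nabla u\|_{L^2}\|\nabla u\|_{L^6}^3$ together with \eqref{curl-effective-viscous-estimate6}, rather than appealing to \eqref{nabla-u-L^4-estimate2}; either route works there. Your treatment of \eqref{small-time-estimate3}--\eqref{small-time-estimate4} matches the paper.
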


\begin{proof}
  Multiplying $\eqref{Large-CNS-eq}_2$ by $u_t$ and integrating over $\Omega$, we get
  \begin{equation}\label{small-T-estimate1}
  \begin{aligned}
    &\quad\frac{d}{dt}\left(\int(\frac{\mu}{2}|\curl u|^2+\frac{2\mu+\lambda}{2}|\div u|^2-P\div u)+\frac{\mu}{2}\int_{\pa\Omega}Au\cdot u\right)+\int\rho|\dot{u}|^2\\
    &=\int\rho\dot{u}\cdot(u\cdot\nabla u)-\int P_t\div u\\
    &=\int\rho\dot{u}\cdot(u\cdot\nabla u)-\int Pu\cdot\nabla\div u+(\gamma-1)\int P|\div u|^2\\
    &=\int\rho\dot{u}\cdot(u\cdot\nabla u)-\frac{1}{2\mu+\lambda}\int Pu\cdot\nabla G+\frac{1}{2(2\mu+\lambda)}\int\div uP^2+(\gamma-1)\int P|\div u|^2\\
    &\leq C(\tilde{\rho})\norm{\sqrt{\rho}\dot{u}}_{L^2}\norm{\rho^{\frac{1}{3}}u}_{L^3}\norm{\nabla u}_{L^6}+C\norm{P}_{L^3}\norm{u}_{L^6}\norm{\nabla G}_{L^2}+C\norm{\nabla u}_{L^2}\norm{P}_{L^4}^2\\
    &\quad+C(\tilde{\rho})(\gamma-1)\norm{\nabla u}_{L^2}^2\\
    &\leq C(\tilde{\rho})\norm{\sqrt{\rho}\dot{u}}_{L^2}\norm{\rho^{\frac{1}{3}}u}_{L^3}(\norm{\rho\dot{u}}_{L^2}+\norm{P-\bar{P}}_{L^6}+\norm{\nabla u}_{L^2})\\
    &\quad+C\norm{P}_{L^3}\norm{\nabla u}_{L^2}(\norm{\rho\dot{u}}_{L^2}+\norm{\nabla u}_{L^2})+C(\tilde{\rho})\norm{\nabla u}_{L^2}^2+C\norm{P}_{L^4}^4\\
    &\leq (C(\tilde{\rho})\norm{\rho^{\frac{1}{3}}u}_{L^3}+\frac{1}{4})\norm{\sqrt{\rho}\dot{u}}_{L^2}^2+C(\tilde{\rho})\norm{\rho^{\frac{1}{3}}u}_{L^3}(\norm{\nabla u}_{L^2}^2+\norm{P}_{L^6}^2)+C(\tilde{\rho})(\norm{\nabla u}_{L^2}^2+\norm{P}_{L^1})\\
    &\leq(C(\tilde{\rho})\norm{\rho^{\frac{1}{3}}u}_{L^3}+\frac{1}{4})\norm{\sqrt{\rho}\dot{u}}_{L^2}^2+C(\tilde{\rho})\norm{\rho^{\frac{1}{3}}u}_{L^3}(\norm{\nabla u}_{L^2}^2+\norm{P}_{L^1}^{\frac{1}{3}})+C(\tilde{\rho})(\norm{\nabla u}_{L^2}^2+\norm{P}_{L^1}),
  \end{aligned}
  \end{equation}
  where we have used \eqref{Pressure-eq}, \eqref{a-priori-assumption}, \eqref{curl-effective-viscous-estimate2} and \eqref{curl-effective-viscous-estimate6}.
  
  Then, integrating \eqref{small-T-estimate1} on $[0,\sigma(T)]$ and using \eqref{curl-effective-viscous-estimate1}, \eqref{essential-energy} and \eqref{essential-estimate}, we have
  \begin{equation}\label{small-T-estimate2}
    \begin{aligned}
    &\quad\sup_{t\in[0,\sigma(T)]}\norm{\nabla u}_{L^2}^2+\int_0^{\sigma(T)}\int\rho|\dot{u}|^2\\
    &\leq C(M)+C(\tilde{\rho})\mathcal{E}_0+C(\tilde{\rho})A_3^{\frac{1}{3}}(\sigma(T))(\mathcal{E}_0+\mathcal{E}_0^{\frac{1}{3}})\leq C(\tilde{\rho},M)
    \end{aligned}
  \end{equation}
  provided
  \begin{equation*}
    C(\tilde{\rho})A_3^{\frac{1}{3}}(\sigma(T))\leq\frac{1}{4},\quad \textrm{i.e.}, \quad \mathcal{E}_0\leq(4C(\tilde{\rho}))^{-12}.
  \end{equation*}
  Thus, we complete the proof of \eqref{small-time-estimate1}.
  
  Next, we turn to prove \eqref{small-time-estimate2}. Taking $m=1$ and $T=\sigma(T)$ in \eqref{A-control-4}, we have from \eqref{small-time-estimate1} that
  \begin{equation}\label{small-T-estimate3}
    \begin{aligned}
    &\quad\sup_{t\in[0,\sigma(T)]}\sigma\int\rho|\dot{u}|^2+\int_0^{\sigma(T)}\int\sigma|\nabla\dot{u}|^2\\
    &\leq \sup_{t\in[0,\sigma(T)]}\sigma(C\norm{\nabla u}_{L^2}^3+C(\tilde{\rho})\norm{\nabla u}_{L^2}^4)+C\int_0^{\sigma(T)}(\norm{\nabla u}_{L^2}^2(\norm{\sqrt{\rho}\dot{u}}_{L^2}^2+C(\tilde{\rho}))+\norm{\sqrt{\rho}\dot{u}}_{L^2}^2+\norm{\nabla u}_{L^2}^4)\\
    &\quad+C\int_0^{\sigma(T)}\sigma(\norm{\nabla u}_{L^4}^4+\norm{P|\nabla u|}_{L^2}^2+\norm{P-\bar{P}}_{L^4}^4)+C(\tilde{\rho})\int_0^{\sigma(T)}\sigma(\norm{\nabla u}_{L^2}^8+\norm{\sqrt{\rho}\dot{u}}_{L^2}^2\norm{\nabla u}_{L^2}^4)\\
    &\leq C(\tilde{\rho},M)A_1(\sigma(T))+C(\tilde{\rho},M)+C(\tilde{\rho},M)\mathcal{E}_0+C(\tilde{\rho},M)A_1(\sigma(T))\mathcal{E}_0+C(\tilde{\rho},M)A_1(\sigma(T))\\
    &\quad+\frac{1}{2}\sup_{t\in[0,\sigma(T)]}\sigma\norm{\sqrt{\rho}\dot{u}}_{L^2}^2,
    \end{aligned}
  \end{equation}
  where we have used the following estimate
  \begin{equation*}
    \begin{aligned}
    C\int_0^{\sigma(T)}\sigma\norm{\nabla u}_{L^4}^4&\leq C\int_0^{\sigma(T)}\sigma\norm{\nabla u}_{L^2}\norm{\nabla u}_{L^6}^3\\
    &\leq C\int_0^{\sigma(T)}\sigma\norm{\nabla u}_{L^2}(\norm{\rho\dot{u}}_{L^2}+\norm{\nabla u}_{L^2}+\norm{P-\bar{P}}_{L^6})^3\\
    &\leq C(\tilde{\rho},M)\sup_{t\in[0,\sigma(T)]}\sigma\norm{\sqrt{\rho}\dot{u}}_{L^2}\int_0^{\sigma(T)}\norm{\sqrt{\rho}\dot{u}}_{L^2}^2+C(\tilde{\rho})A_1(\sigma(T))\mathcal{E}_0+C(\tilde{\rho})\mathcal{E}_0\\
    &\leq\frac{1}{2}\sup_{t\in[0,\sigma(T)]}\sigma\norm{\sqrt{\rho}\dot{u}}_{L^2}^2+C(\tilde{\rho},M)+C(\tilde{\rho})A_1(\sigma(T))\mathcal{E}_0+C(\tilde{\rho})\mathcal{E}_0
    \end{aligned}
  \end{equation*}
  due to \eqref{curl-effective-viscous-estimate6}, \eqref{essential-energy} and \eqref{essential-estimate}.
  Then, \eqref{small-T-estimate3} implies
  \begin{equation}\label{small-T-estimate4}
    \sup_{t\in[0,\sigma(T)]}\sigma\int\rho|\dot{u}|^2+\int_0^{\sigma(T)}\int\sigma|\nabla\dot{u}|^2\leq C(\tilde{\rho},M),
  \end{equation} immediately.
  Therefore, we complete the proof of \eqref{small-time-estimate2}.
  The estimates \eqref{small-T-estimate3} and \eqref{small-T-estimate4} are the combinations of \eqref{small-T-estimate1}-\eqref{small-time-estimate2} and assumption \eqref{a-priori-assumption}.
  Thus, we have completed the proof of Lemma \ref{lem-small-time-estimate}
\end{proof}

\begin{lem}\label{lem-u-L^3}
  Under the conditions of Proposition \ref{prop-a-priori-estimate}, it holds that
  \begin{equation}\label{u-L^3-goal}
    A_3(\sigma(T))\leq\mathcal{E}_0^{\frac{1}{4}},
  \end{equation}
  provided
  \begin{equation}\label{small-assumption2}
    \mathcal{E}_0\leq\epsilon_3=\min\{\epsilon_2,(C(\tilde{\rho},M))^{-2}\}.
  \end{equation}
\end{lem}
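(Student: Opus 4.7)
}
The natural plan is to estimate $\displaystyle\frac{d}{dt}\int\rho|u|^3$ directly, integrate in time on $[0,\sigma(T)]$, and control everything by the energy bounds already obtained in Lemmas \ref{lem-essential-energy}--\ref{lem-small-time-estimate}. Starting from the continuity equation $\rho_t+\div(\rho u)=0$ and the identity $\dot u=u_t+u\cdot\nabla u$, an integration by parts yields
\begin{equation*}
\frac{d}{dt}\int\rho|u|^3=3\int\rho|u|\,u\cdot\dot u,
\end{equation*}
so that $\bigl|\tfrac{d}{dt}\int\rho|u|^3\bigr|\le 3\int\rho|u|^2|\dot u|$. I would then use Hölder together with the a priori bound $\rho\le 2\tilde\rho$ to write
\begin{equation*}
\int\rho|u|^2|\dot u|\le C(\tilde\rho)\,\|\sqrt\rho\,\dot u\|_{L^2}\,\|u\|_{L^4}^2,
\end{equation*}
and invoke Gagliardo--Nirenberg together with the Poincaré inequality \eqref{Poincare-inequality} (available because $u\cdot n|_{\partial\Omega}=0$) to bound $\|u\|_{L^4}^2\le C\|\nabla u\|_{L^2}^2$. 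This reduces the problem to estimating
\begin{equation*}
\int_0^{\sigma(T)}\|\sqrt\rho\,\dot u\|_{L^2}\,\|\nabla u\|_{L^2}^2\,dt.
\end{equation*}

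Next I would split this space-time integral by Cauchy--Schwarz in time, obtaining
\begin{equation*}
\int_0^{\sigma(T)}\|\sqrt\rho\,\dot u\|_{L^2}\|\nabla u\|_{L^2}^2\,dt
\le \left(\int_0^{\sigma(T)}\|\sqrt\rho\,\dot u\|_{L^2}^2\right)^{1/2}\!\left(\sup_{[0,\sigma(T)]}\|\nabla u\|_{L^2}^2\right)^{1/2}\!\left(\int_0^{\sigma(T)}\|\nabla u\|_{L^2}^2\right)^{1/2}.
\end{equation*}
The three factors are under control: the first by \eqref{small-time-estimate1}, the second by \eqref{small-time-estimate1} again, and the third by the essential energy estimate \eqref{essential-estimate}, which is the only place where a factor of $\mathcal{E}_0^{1/2}$ is gained. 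Combining these inputs gives
\begin{equation*}
\int_0^{\sigma(T)}\|\sqrt\rho\,\dot u\|_{L^2}\|\nabla u\|_{L^2}^2\,dt \le C(\tilde\rho,M)\,\mathcal{E}_0^{1/2}.
\end{equation*}

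For the initial value, I would argue
\begin{equation*}
\int\rho_0|u_0|^3\le C(\tilde\rho)^{1/2}\|\sqrt{\rho_0}\,u_0\|_{L^2}\,\|u_0\|_{L^4}^2\le C(\tilde\rho,M)\,\mathcal{E}_0^{1/2},
\end{equation*}
using $\|\sqrt{\rho_0}\,u_0\|_{L^2}^2\le 2\mathcal{E}_0$ and the same Gagliardo--Nirenberg/Poincaré chain combined with $\|\nabla u_0\|_{L^2}\le M$. Putting these two bounds together yields
\begin{equation*}
A_3(\sigma(T))=\sup_{t\in[0,\sigma(T)]}\int\rho|u|^3\le C(\tilde\rho,M)\,\mathcal{E}_0^{1/2},
\end{equation*}
and the desired bound $A_3(\sigma(T))\le \mathcal{E}_0^{1/4}$ then follows as soon as $C(\tilde\rho,M)\,\mathcal{E}_0^{1/4}\le 1$, i.e.\ under a smallness assumption of the form $\mathcal{E}_0\le (C(\tilde\rho,M))^{-2}$ that matches \eqref{small-assumption2} (up to redefining the generic constant).

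The only delicate point is the book-keeping of the powers of $\mathcal{E}_0$: the source of smallness is the elementary energy inequality \eqref{essential-estimate}, and one must be careful to spend it only once so as to obtain at least $\mathcal{E}_0^{1/2}$ on the right-hand side, which after taking the smallness condition on $\mathcal{E}_0$ produces the clean coefficient $1$ in front of $\mathcal{E}_0^{1/4}$. Nothing else uses the full structure of the Navier--slip boundary conditions here, since the boundary integrals from the differential identity disappear thanks to $\dot u\cdot n$ being lower order (cf.\ \eqref{dot-u-transition1}), and the only role of the boundary is to guarantee the Poincaré inequality for $u$ itself.
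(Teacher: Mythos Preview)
Your proof is correct, and it is actually more streamlined than the paper's argument, though the underlying strategy is the same: derive a differential inequality for $\int\rho|u|^3$, integrate on $[0,\sigma(T)]$, and harvest a power of $\mathcal E_0$ from Lemma~\ref{lem-essential-estimate}.

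The difference lies in how the right-hand side is handled. The paper multiplies $\eqref{Large-CNS-eq}_2$ by $3|u|u$, which produces the viscous and pressure terms explicitly; after throwing away the good sign-definite pieces it is left with $C\int|u||\nabla u|^2+C\int P|u||\nabla u|$, and then uses the $L^6$ bound on $\nabla u$ from \eqref{curl-effective-viscous-estimate6} together with a H\"older splitting to reach $C(\tilde\rho,M)\mathcal E_0^{3/4}$. You instead stop at the transport identity $\tfrac{d}{dt}\int\rho|u|^3=3\int\rho|u|\,u\cdot\dot u$ and bound the right-hand side directly by $C(\tilde\rho)\|\sqrt\rho\,\dot u\|_{L^2}\|\nabla u\|_{L^2}^2$, never invoking the structure of the viscous or pressure terms. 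This is shorter and avoids Lemma~\ref{lem-curl-effective-viscous} entirely, at the price of obtaining only $C(\tilde\rho,M)\mathcal E_0^{1/2}$ instead of $\mathcal E_0^{3/4}$. Consequently your smallness threshold is $\mathcal E_0\le(C(\tilde\rho,M))^{-4}$ rather than the paper's $(C(\tilde\rho,M))^{-2}$; as you note, this is absorbed into the generic constant in \eqref{small-assumption2}, so the stated lemma follows either way. Your closing remark about boundary contributions from $\dot u\cdot n$ is unnecessary here, since in your route the only integration by parts uses $u\cdot n|_{\partial\Omega}=0$ and no boundary term involving $\dot u$ ever appears.
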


\begin{proof}
  Multiplying $\eqref{Large-CNS-eq}_2$ by $3|u|u$ and integrating over $\Omega$ yields that
  \begin{equation}\label{u-L^3-estimate1}
    \begin{aligned}
    &\quad\frac{d}{dt}\int\rho|u|^3+3(2\mu+\lambda)\int\div u\div(u|u|)+3\mu\int\curl u\cdot\curl(u|u|)+3\mu\int_{\pa\Omega}Au\cdot u|u|\\
    &=3\int P\div(u|u|)
    \end{aligned}
  \end{equation}
  which together with \eqref{curl-effective-viscous-estimate6} implies that
  \begin{equation}\label{u-L^3-estimate2}
    \begin{aligned}
    &\quad\frac{d}{dt}\int\rho|u|^3+3(2\mu+\lambda)\int|\div u|^2|u|+3\mu\int|\curl u|^2|u|+3\mu\int_{\pa\Omega}Au\cdot u|u|\\
    &\leq C\int|u||\nabla u|^2+C\int P|u||\nabla u|\\
    &\leq C\norm{u}_{L^6}\norm{\nabla u}_{L^2}^{\frac{3}{2}}\norm{\nabla u}_{L^6}+C\norm{u}_{L^6}\norm{\nabla u}_{L^2}\norm{P}_{L^3}\\
    &\leq C\norm{\nabla u}_{L^2}^{\frac{5}{2}}(\norm{\rho\dot{u}}_{L^2}+\norm{\nabla u}_{L^2}+\norm{P-\bar{P}}_{L^6})^{\frac{1}{2}}+C\norm{\nabla u}_{L^2}^2\norm{P}_{L^3}. 
    \end{aligned}
  \end{equation}
  Then integrating over $[0,\sigma(T)]$, we have from \eqref{small-time-estimate1}, \eqref{small-time-estimate2}, \eqref{a-priori-assumption} and \eqref{essential-estimate}
  that
  \begin{equation}\label{u-L^3-estimate3}
    \begin{aligned}
    A_3(\sigma(T))&\leq C(\tilde{\rho})\sup_{t\in[0,\sigma(T)]}\norm{\nabla u}_{L^2}(\int_0^{\sigma(T)}\norm{\nabla u}_{L^2}^2)^{\frac{3}{4}}(\int_0^{\sigma(T)}\norm{\sqrt{\rho}\dot{u}}_{L^2}^2)^{\frac{1}{4}}\\
    &\quad+\int\rho_0|u_0|^3+C(\tilde{\rho},M)\mathcal{E}_0\\
    &\leq C(\tilde{\rho},M)\mathcal{E}_0^{\frac{3}{4}}++C(\tilde{\rho},M)\mathcal{E}_0
    +C(\tilde{\rho})\norm{\sqrt{\rho_0}u_0}_{L^2}^{\frac{3}{2}}\norm{\nabla u_0}_{L^2}^{\frac{3}{2}}\\
    &\leq C(\tilde{\rho},M)\mathcal{E}_0^{\frac{3}{4}}\leq \mathcal{E}_0^{\frac{1}{4}}    
    \end{aligned}
  \end{equation}
  provided
  \begin{equation*}
    C(\tilde{\rho},M)\mathcal{E}_0^{\frac{1}{2}}\leq1,\quad\textrm{i.e.},\quad\mathcal{E}_0\leq(C(\tilde{\rho},M))^{-2}.
  \end{equation*}
  Thus, we prove \eqref{u-L^3-goal} and thus complete the proof of \ref{lem-u-L^3}.
\end{proof}

With the following lemma, we can complete the proof of Proposition \ref{prop-a-priori-estimate}.
\begin{lem}\label{lem-control-A1-A2}
  Under the conditions of Proposition \ref{prop-a-priori-estimate}, there holds that
  \begin{equation}\label{control-A1-A2}
    A_1(T)\leq\mathcal{E}_0^{\frac{3}{8}},\quad A_1(T)\leq\mathcal{E}_0^{\frac{1}{2}},
  \end{equation}
  provided
  \begin{equation}\label{small-assumption3}
    \mathcal{E}_0\leq\epsilon_4=\min\{\epsilon_3,(2C(\tilde{\rho}))^{-16},(2C(\tilde{\rho},M)(E_0+1))^{-2},(2C(\tilde{\rho})E_0^{\frac{1}{2}})^{-32},(2C(\tilde{\rho},M))^{-\frac{16}{9}}\}
  \end{equation}
  and
  \begin{equation}\label{small-A-assumption1}
    \norm{A}_{W^{1,6}}\leq\min\left\{(2C)^{-\frac{1}{3}}, (27C(\Omega))^{-\frac{1}{3}}, (2CC(\Omega))^{-\frac{1}{3}}\right\}
  \end{equation}
  with $C$ here depending only on $\mu,\lambda$ and $\Omega$, and $C(\Omega)$ only depending on $\Omega$.
\end{lem}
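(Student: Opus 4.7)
The plan is to carry inequalities \eqref{A1-control} and \eqref{A2-control} from Lemma \ref{lem-A1-A2-control} as the structural skeleton, bound each integral on their right-hand sides in terms of $A_1(T)$, $A_2(T)$, $\mathcal{E}_0$ and $E_0$, and close them into \eqref{control-A1-A2} by a simultaneous absorption argument aimed at the self-consistent scaling $A_1(T)\sim\mathcal{E}_0^{3/8}$, $A_2(T)\sim\mathcal{E}_0^{1/2}$ (so that $A_2(T)\sim A_1(T)^{4/3}$, as advertised in the introduction). The enabling observation is that the pressure is small in every finite $L^q$: since $\rho\le 2\tilde\rho$ by \eqref{a-priori-assumption} and $\int P(\rho)\le(\gamma-1)E_0\le\mathcal{E}_0$ by \eqref{essential-energy}, interpolation gives $\|P\|_{L^q}+\|P-\bar P\|_{L^q}\le C(\tilde\rho)\mathcal{E}_0^{1/q}$ for every $q\in[1,\infty)$. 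This is exactly what compensates for the fact that, unlike in \cite{Cai-Li2023}, the full time-integral $\int_0^T\|\nabla u\|_{L^2}^2$ is \emph{not} small (only $\int_0^{\sigma(T)}\|\nabla u\|_{L^2}^2\le C(\tilde\rho)\mathcal{E}_0$ is, by Lemma \ref{lem-essential-estimate}).

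For the $A_1$-inequality \eqref{A1-control} I would bound the three non-trivial integrals as follows. The pressure-cubic term $\int_0^T\!\int\sigma P|\nabla u|^2$ is controlled by H\"older plus \eqref{curl-effective-viscous-estimate6} as $\|P\|_{L^{3/2}}\int_0^T\sigma\|\nabla u\|_{L^6}^2\le C(\tilde\rho)\mathcal{E}_0^{2/3}(A_1(T)+E_0+\mathcal{E}_0)$. The cubic term $\int_0^T\sigma\|\nabla u\|_{L^3}^3$ is treated by the interpolation $\|\nabla u\|_{L^3}^3\le\|\nabla u\|_{L^2}^{3/2}\|\nabla u\|_{L^6}^{3/2}$ combined again with \eqref{curl-effective-viscous-estimate6}, after splitting $[0,T]=[0,\sigma(T)]\cup[\sigma(T),T]$ so that Lemmas \ref{lem-essential-estimate}, \ref{lem-small-time-estimate} apply on the short interval and the $A_1$-bound on the long one. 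The quartic $\int_0^T\sigma\|\nabla u\|_{L^2}^4$ is controlled, after the same split, by $C(\tilde\rho,M)\mathcal{E}_0+A_1(T)\cdot E_0$ thanks to \eqref{essential-energy}. Collecting, \eqref{A1-control} becomes $A_1(T)\le C(\tilde\rho,M)\mathcal{E}_0(1+E_0)+C(\tilde\rho,M)(E_0+1)\mathcal{E}_0^{\alpha}\,A_1(T)$ for some $\alpha>0$, whence the smallness \eqref{small-assumption3} absorbs the linear $A_1(T)$-factor and yields the preliminary $A_1(T)\le\mathcal{E}_0^{3/8}$.

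For the $A_2$-inequality \eqref{A2-control} the crux, flagged in Remark \ref{rem-small-A} and \eqref{trouble-estimate1}--\eqref{trouble-estimate2}, is $\int_0^T\sigma^3\|\nabla u\|_{L^4}^4$. The naive Hodge-type bound would bring in the self-referential $\int_0^T\sigma^3\|\nabla u\|_{L^2}^4$, which in the Cauchy setting of \cite{Zhu2024} simply drops out but here is generated by the boundary term $\curl u\times n=-Au$. I sidestep this via the refined estimate \eqref{nabla-u-L^4-estimate2} of Remark \ref{rem-small-A}, valid \emph{precisely} when $\|A\|_{W^{1,6}}\le(2C)^{-1/3}$: it bounds $\|\nabla u\|_{L^4}^4$ by $C(\|\nabla u\|_{L^2}+\|P-\bar P\|_{L^2})\|\sqrt\rho\dot u\|_{L^2}^3+C\|P-\bar P\|_{L^4}^4$ with \emph{no} $\|\nabla u\|_{L^2}^4$ contribution. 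The cubic dissipation integral is then controlled via $\sigma^3\|\sqrt\rho\dot u\|_{L^2}^3=(\sigma^3\|\sqrt\rho\dot u\|_{L^2}^2)^{1/2}\cdot\sigma^{3/2}\|\sqrt\rho\dot u\|_{L^2}^2$ plus Cauchy--Schwarz, yielding the bound $C(\tilde\rho)A_2(T)^{1/2}A_1(T)$; and $\int_0^T\sigma^3\|P-\bar P\|_{L^4}^4$ is controlled by a dedicated energy identity obtained by testing the pressure transport equation \eqref{Pressure-eq} against $|P-\bar P|^2(P-\bar P)$, which reduces it to expressions in already-small pressure norms and in $\|\nabla u\|_{L^2}$, $\|\sqrt\rho\dot u\|_{L^2}$. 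The $\int_0^T\sigma^3\|P|\nabla u|\|_{L^2}^2$ term is then handled exactly as its counterpart in the $A_1$ estimate.

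Closure is a two-step bootstrap. Inserting the preliminary $A_1(T)\le\mathcal{E}_0^{3/8}$ gives $A_1^{3/2}(T)\le\mathcal{E}_0^{9/16}$ and $A_1^3(T)(E_0+1)\le\mathcal{E}_0^{9/8}(E_0+1)$, both $\le\tfrac14\mathcal{E}_0^{1/2}$ under \eqref{small-assumption3}; and the remaining $C(\tilde\rho)A_2^{1/2}(T)A_1(T)$-type contribution is absorbed by Young's inequality, so \eqref{A2-control} collapses to $A_2(T)\le\mathcal{E}_0^{1/2}$. The hardest technical point is precisely the one flagged in the introduction: without the smallness \eqref{small-A-assumption1} on $A$, the control of $\int_0^T\sigma^3\|\nabla u\|_{L^4}^4$ generates the uncontrollable $\int_0^T\sigma^3\|\nabla u\|_{L^2}^4$ (which is larger than $A_2(T)$), and overcoming this requires the refined estimate \eqref{nabla-u-L^4-estimate2}, while \emph{simultaneously} the pressure-smallness gains $\mathcal{E}_0^{1/q}$ must neutralize the large $E_0$-factors that permeate every estimate. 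The interplay of these two smallness mechanisms, rather than any single bound, is what makes the closure work.
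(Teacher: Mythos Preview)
The proposal has a structural gap in the order of closure. You propose to close $A_1(T)$ first and then $A_2(T)$, but the direct $A_1$-closure on the long interval $[\sigma(T),T]$ does not go through. Concretely: (i) your bound for the quartic term gives $\int_{\sigma(T)}^T\sigma\|\nabla u\|_{L^2}^4\le A_1(T)\cdot CE_0$, and hence in \eqref{A1-control} a contribution $C(\tilde\rho)E_0\,A_1(T)$ on the right. This carries \emph{no} $\mathcal{E}_0^{\alpha}$ prefactor, so for large $E_0$ it cannot be absorbed into the left-hand side, contradicting your claimed form $A_1(T)\le C(\tilde\rho,M)\mathcal{E}_0(1+E_0)+C(\tilde\rho,M)(E_0+1)\mathcal{E}_0^{\alpha}A_1(T)$. (ii) Your H\"older bound $\int\sigma P|\nabla u|^2\le\|P\|_{L^{3/2}}\int_0^T\sigma\|\nabla u\|_{L^6}^2$ produces, through \eqref{curl-effective-viscous-estimate6}, the term $\int_0^T\sigma\|P-\bar P\|_{L^6}^2$. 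The pointwise bound $\|P-\bar P\|_{L^6}^2\le C(\tilde\rho)\mathcal{E}_0^{1/3}$ is uniform in $t$ but does not decay, so this time integral grows like $T$ and your claim that it contributes only $\mathcal{E}_0$ is incorrect. The same obstruction hits your treatment of $\int_{\sigma(T)}^T\sigma\|\nabla u\|_{L^3}^3$.

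The paper resolves this by reversing the order. First it proves the \emph{short-time} bound $A_1(\sigma(T))\le C(\tilde\rho,M)\mathcal{E}_0(1+E_0)$ (equations \eqref{A1-estimate-small-T1}--\eqref{A1-estimate-small-T2}), where the obstructions above vanish because $\sigma(T)\le1$. Second, it closes $A_2(T)$ using this together with the a priori assumption $A_1(T)\le 2\mathcal{E}_0^{3/8}$, obtaining as a by-product the space--time $L^4$ bounds $\int_0^T\sigma^3\|\nabla u\|_{L^4}^4+\int_0^T\sigma^3\|P-\bar P\|_{L^4}^4\le C(\tilde\rho)A_1^{3/2}(T)A_2^{1/2}(T)$ (see \eqref{nabla-u-P-L^4-estimate}). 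Only \emph{then} does it close $A_1(T)$ on $[\sigma(T),T]$, by H\"older against these $L^4$ quantities (equations \eqref{A1-estimate-2}--\eqref{A1-estimate-5}): this replaces the non-integrable $\|P-\bar P\|_{L^6}$ by the integrable $\int\|P-\bar P\|_{L^4}^4$, and simultaneously the factor $A_2^{1/4}(T)$ supplies the missing smallness that makes $C(\tilde\rho)E_0^{1/2}A_1^{3/4}A_2^{1/4}\le C(\tilde\rho)E_0^{1/2}\mathcal{E}_0^{13/32}$ absorbable. Your analysis of the $A_2$-closure and of the role of $\|A\|_{W^{1,6}}$ is essentially correct, but the $A_1$-closure cannot precede it.
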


\begin{proof}
 Due to the lack of smallness of $\displaystyle\int_0^T\int|\nabla u|^2$, motivated by \cite{Zhu2024}, we first estimate $\displaystyle\int_0^T\sigma^3\norm{P-\bar{P}}_{L^4}^4$ rather than $\displaystyle\int_0^T\int|P-\bar{P}|^2$ in \cite{Cai-Li2023}. To begin with, it follows from $\eqref{Large-CNS-eq}_1$ that
  \begin{equation}\label{Pressure-eq2}
    (P-\bar{P})_t+u\cdot\nabla(P-\bar{P})+\gamma(P-\bar{P})\div u+\gamma\bar{P}\div u-(\gamma-1)\overline{P\div u}=0.
  \end{equation}
  Multiplying \eqref{Pressure-eq2} by $3\sigma^3(P-\bar{P})^2$ and integrating the resultant equation over $\Omega\times[0,T]$, we get from the fact that $(2\mu+\lambda)\div u=G+(P-\bar{P})$ that
  \begin{equation}\label{P-L^4-estimate1}
    \begin{aligned}
    &\quad\frac{d}{dt}\int\sigma^3(P-\bar{P})^3+\frac{3\gamma-1}{2\mu+\lambda}\int\sigma^3|P-\bar{P}|^4\\
    &=3\sigma^2\sigma'\int(P-\bar{P})^3-\frac{3\gamma-1}{2\mu+\lambda}\int\sigma^3(P-\bar{P})^3G+3(\gamma-1)\overline{P\div u}\sigma^3\int(P-\bar{P})^2\\
    &\quad-3\gamma\bar{P}\int\sigma^3(P-\bar{P})^2\div u\\
    &\leq3\sigma^2\sigma'\int(P-\bar{P})^3+\frac{3\gamma-1}{2(2\mu+\lambda)}\int\sigma^3|P-\bar{P}|^4+\frac{27}{4}\frac{3\gamma-1}{2\mu+\lambda}\sigma^3\norm{G}_{L^4}^4\\
    &\quad+C(\gamma-1)\sigma^3\norm{P}_{L^2}^2\norm{\nabla u}_{L^2}^2+C\gamma\sigma^3\bar{P}^2\norm{\nabla u}_{L^2}^2,
    \end{aligned}
  \end{equation}
  which together with the following simple fact by \eqref{curl-G-modified-estimate3}, \eqref{a-priori-assumption} and \eqref{essential-energy}
  \begin{equation}\label{G-L^4-control1}
    \begin{aligned}
    \int_0^T\sigma^3\norm{G}_{L^4}^4&\leq \int_0^T\sigma^3\norm{G}_{L^2}\norm{G}_{L^6}^3\\
    &\leq C(\Omega)\int_0^T\sigma^3(\norm{\nabla u}_{L^2}+\norm{P-\bar{P}}_{L^2})(\norm{\rho\dot{u}}_{L^2}^3+\norm{A}_{W^{1,6}}^3\norm{\nabla u}_{L^2}^3)\\
    &\leq C(\Omega)\int_0^T\sigma^3(\norm{\nabla u}_{L^2}+\norm{P-\bar{P}}_{L^2})\norm{\rho\dot{u}}_{L^2}^3+C(\Omega)\norm{A}_{W^{1,6}}^3\int_0^T\sigma^3\norm{P-\bar{P}}_{L^4}^4\\
    &\quad+C(\Omega)\norm{A}_{W^{1,6}}^3\int_0^T\sigma^3\norm{\nabla u}_{L^4}^4\\
    &\leq C(\tilde{\rho})(A_1^{\frac{1}{2}}(T)+\mathcal{E}_0^{\frac{1}{2}})A_1(T)A_2^{\frac{1}{2}}(T)+C(\Omega)\norm{A}_{W^{1,6}}^3\int_0^T\sigma^3\norm{P-\bar{P}}_{L^4}^4\\
    &\quad+C(\Omega)\norm{A}_{W^{1,6}}^3\int_0^T\sigma^3\norm{\nabla u}_{L^4}^4
    \end{aligned}
  \end{equation}
  and inequalities \eqref{a-priori-assumption} and \eqref{essential-energy} yields that
  \begin{equation}\label{P-L^4-estimate2}
    \begin{aligned}
    \int_0^T\int\sigma^3|P-\bar{P}|^4&\leq C(\tilde{\rho})\mathcal{E}_0+C(\tilde{\rho})A_1^{\frac{3}{2}}(T)A_2^{\frac{1}{2}}(T)
    +C(\tilde{\rho})\mathcal{E}_0^2+C\mathcal{E}_0^2E_0\\
    &\quad+C(\Omega)\norm{A}_{W^{1,6}}^3\int_0^T\sigma^3\norm{\nabla u}_{L^4}^4
    \end{aligned}
  \end{equation}
  provided
  \begin{equation}\label{small-A-condition1}
    \frac{27}{4}C(\Omega)\norm{A}_{W^{1,6}}^3\leq\frac{1}{4},\quad\textrm{i.e.},\quad
    \norm{A}_{W^{1,6}}\leq(27C(\Omega))^{-\frac{1}{3}}
  \end{equation}
  with $C(\Omega)$ depending on $\Omega$ only.
  
  Then, for $A_2(T)$, by virtue of \eqref{P-L^4-estimate2} and \eqref{small-A-condition1}, we have
  \begin{equation}\label{A2-estimate-1}
    \begin{aligned}
    &\quad\int_0^T\sigma^3(\norm{\nabla u}_{L^4}^4+\norm{P|\nabla u|}_{L^2}^2+\norm{P-\bar{P}}_{L^4}^4)\\
    &\leq C\int_0^T\sigma^3\norm{\nabla u}_{L^4}^4+C\int_0^T\sigma^3\norm{P-\bar{P}}_{L^4}^4+C\int_0^T\bar{P}^2\sigma^3\norm{\nabla u}_{L^2}^2\\
    &\leq C\int_0^T\sigma^3\norm{\nabla u}_{L^4}^4+C(\tilde{\rho})\mathcal{E}_0+C(\tilde{\rho})A_1^{\frac{3}{2}}(T)A_2^{\frac{1}{2}}(T)
    +C(\tilde{\rho})\mathcal{E}_0^2+C\mathcal{E}_0^2E_0.  
    \end{aligned}
  \end{equation}
  
  Next, we aim to control $\int_0^T\sigma^3\norm{\nabla u}_{L^4}^4$. If certain smallness is imposed on $A$ (see \eqref{small-A-condition}), due to the discussion in \eqref{nabla-u-L^4-estimate2} and the estimate \eqref{P-L^4-estimate2}, we finally give the control of $\displaystyle\int_0^T\sigma^3\norm{\nabla u}_{L^4}^4$ as follows:
  \begin{equation}\label{A2-estimate-2}
    \begin{aligned}
    \int_0^T\sigma^3\norm{\nabla u}_{L^4}^4&\leq C\int_0^T\sigma^3(\norm{\nabla u}_{L^2}+\norm{P-\bar{P}}_{L^2})\norm{\rho\dot{u}}_{L^2}^3+C\int_0^T\sigma^3\norm{P-\bar{P}}_{L^4}^4\\
    &\leq C(\tilde{\rho})A_1^{\frac{3}{2}}(T)A_2^{\frac{1}{2}}(T)
    +C(\tilde{\rho})\mathcal{E}_0+C(\tilde{\rho})\mathcal{E}_0^2+C\mathcal{E}_0^2E_0\\
    &\quad+CC(\Omega)\norm{A}_{W^{1,6}}^3\int_0^T\sigma^3\norm{\nabla u}_{L^4}^4,\\
    &\quad(\textrm{with $C$ only depending on $\mu, \lambda$ and $\Omega$ here})
    \end{aligned}
  \end{equation}
  which implies that
  \begin{equation}\label{A2-estimate-3}
    \begin{aligned}
      \int_0^T\sigma^3\norm{\nabla u}_{L^4}^4&\leq C(\tilde{\rho})A_1^{\frac{3}{2}}(T)A_2^{\frac{1}{2}}(T)
      +C(\tilde{\rho})\mathcal{E}_0+C(\tilde{\rho})\mathcal{E}_0^2+C\mathcal{E}_0^2E_0\\
      &\leq C(\tilde{\rho})A_1^{\frac{3}{2}}(T)A_2^{\frac{1}{2}}(T)+C\mathcal{E}_0^2E_0,
    \end{aligned}  
  \end{equation}
  provided
  \begin{equation*}
    \norm{A}_{W^{1,6}}\leq\min\left\{(2C)^{-\frac{1}{3}}, (27C(\Omega))^{-\frac{1}{3}}, (2CC(\Omega))^{-\frac{1}{3}}\right\}
  \end{equation*}
  with $C$ here depending only on $\mu,\lambda$ and $\Omega$, and $C(\Omega)$ only depending on $\Omega$.
  
  Then inserting \eqref{A2-estimate-1} and \eqref{A2-estimate-3} into \eqref{A2-control}, we obtain
  \begin{equation}\label{A2-estimate-4}
    \begin{aligned}
    A_2(T)&\leq C(\tilde{\rho})\mathcal{E}_0+CA_1^{\frac{3}{2}}(T)(1+C(\tilde{\rho})A_1(T))^{\frac{1}{2}}+CA_1(\sigma(T))+C(\tilde{\rho})A_1^3(T)(E_0+1)\\
     &\quad+C(\tilde{\rho})A_1^{\frac{3}{2}}(T)A_2^{\frac{1}{2}}(T)
    +C(\tilde{\rho})\mathcal{E}_0^2+C\mathcal{E}_0^2E_0\\
    &\leq C(\tilde{\rho})A_1^{\frac{3}{2}}(T)+CA_1(\sigma(T))+C(\tilde{\rho})A_1^3(T)(1+E_0). 
    \end{aligned}
  \end{equation} 
  Recalling \eqref{A1-control}, we have
  \begin{equation}\label{A1-estimate-1}
    \begin{aligned}
      A_1(T)&\leq C(\tilde{\rho})\mathcal{E}_0+C\mathcal{E}_0E_0+C\int_0^T\int\sigma P|\nabla u|^2+C\int_0^T\sigma\norm{\nabla u}_{L^3}^3+C(\tilde{\rho})\int_0^T\sigma\norm{\nabla u}_{L^2}^4\\
      &\leq C(\tilde{\rho})\mathcal{E}_0(1+E_0)+C\int_0^T\int\sigma |P-\bar{P}||\nabla u|^2+C\bar{P}\int_0^T\int\sigma|\nabla u|^2+C\int_0^T\sigma\norm{\nabla u}_{L^3}^3\\
      &\quad+C(\tilde{\rho})\int_0^T\sigma\norm{\nabla u}_{L^2}^4\\
      &\leq C(\tilde{\rho})\mathcal{E}_0(1+E_0)+C\int_0^T\int\sigma |P-\bar{P}||\nabla u|^2+C\int_0^T\sigma\norm{\nabla u}_{L^3}^3+C(\tilde{\rho})\int_0^T\sigma\norm{\nabla u}_{L^2}^4.
    \end{aligned}
  \end{equation}
  Then from \eqref{essential-estimate}, \eqref{curl-effective-viscous-estimate6}, \eqref{essential-energy}, \eqref{a-priori-assumption} and \eqref{small-time-estimate2}, it holds that
  \begin{equation}\label{A1-estimate-small-T1}
    \begin{aligned}
    &\quad A_1(\sigma(T))\\
    &\leq C(\tilde{\rho})\mathcal{E}_0(1+E_0)+C\int_0^{\sigma(T)}\int\sigma |P-\bar{P}||\nabla u|^2+C\int_0^{\sigma(T)}\sigma\norm{\nabla u}_{L^3}^3\\
    &\quad+C(\tilde{\rho})\int_0^{\sigma(T)}\sigma\norm{\nabla u}_{L^2}^4\\
    &\leq C(\tilde{\rho})\mathcal{E}_0(1+E_0)+C(\tilde{\rho})\mathcal{E}_0(1+A_1(\sigma(T)))+C\int_0^{\sigma(T)}\sigma\norm{\nabla u}_{L^2}^{\frac{3}{2}}\norm{\nabla u}_{L^6}^{\frac{3}{2}}\\
    &\leq C(\tilde{\rho})\mathcal{E}_0(1+E_0)+C\int_0^{\sigma(T)}\sigma\norm{\nabla u}_{L^2}^{\frac{3}{2}}(\norm{\rho\dot{u}}_{L^2}+\norm{\nabla u}_{L^2}+\norm{P-\bar{P}}_{L^6})^{\frac{3}{2}}\\
    &\leq C(\tilde{\rho})\mathcal{E}_0(1+E_0)+C(\tilde{\rho})\sup_{t\in[0,\sigma(T)]}\sigma^{\frac{1}{2}}\norm{\sqrt{\rho}\dot{u}}_{L^2}(\int_0^{\sigma(T)}\norm{\nabla u}_{L^2}^2)^{\frac{3}{4}}(\int_0^{\sigma(T)}\sigma^2\norm{\sqrt{\rho}\dot{u}}_{L^2}^2)^{\frac{1}{4}}\\
    &\quad+C(\tilde{\rho})A_1^{\frac{1}{2}}(\sigma(T))\mathcal{E}_0+C(\int_0^{\sigma(T)}\norm{\nabla u}_{L^2}^2)^{\frac{3}{4}}(\int_0^{\sigma(T)}\norm{P-\bar{P}}_{L^6}^6)^{\frac{1}{4}}\\
    &\leq C(\tilde{\rho})\mathcal{E}_0(1+E_0)+C(\tilde{\rho},M)\mathcal{E}_0^{\frac{3}{4}}A_1^{\frac{1}{4}}(\sigma(T))+C(\tilde{\rho})\mathcal{E}_0\\
    &\leq C(\tilde{\rho})\mathcal{E}_0(1+E_0)+C(\tilde{\rho},M)\mathcal{E}_0+\frac{1}{2}A_1(\sigma(T)),    
    \end{aligned}
  \end{equation}
  which implies
  \begin{equation}\label{A1-estimate-small-T2}
    A_1(\sigma(T))\leq C(\tilde{\rho},M)\mathcal{E}_0(1+E_0).
  \end{equation}
  Then we turn back to \eqref{A2-estimate-4} and get
  \begin{equation}\label{A2-estimate-5}
    \begin{aligned}
    A_2(T)&\leq C(\tilde{\rho})A_1^{\frac{3}{2}}(T)+C(\tilde{\rho},M)\mathcal{E}_0(1+E_0)+C(\tilde{\rho})A_1^3(T)(1+E_0)\\
    &\leq C(\tilde{\rho})A_1^{\frac{3}{2}}(T)+C(\tilde{\rho},M)\mathcal{E}_0(1+E_0)\\
    &\leq C(\tilde{\rho})\mathcal{E}_0^{\frac{9}{16}}+C(\tilde{\rho},M)\mathcal{E}_0(1+E_0)\\
    &\leq\mathcal{E}_0^{\frac{1}{2}} 
    \end{aligned}
  \end{equation}
  provided
  \begin{equation}\label{small-condition-transition1}
    C(\tilde{\rho})\mathcal{E}_0^{\frac{1}{16}}\leq\frac{1}{2},\,C(\tilde{\rho},M)\mathcal{E}_0^{\frac{1}{2}}(1+E_0)\leq\frac{1}{2},\,
    \textrm{i.e.},\,\mathcal{E}_0\leq\min\{(2C(\tilde{\rho}))^{-16},(2C(\tilde{\rho},M)(E_0+1))^{-2}\}.
  \end{equation}
  Thus we finish the estimate on $A_2(T)$. 
  
  It is easy to check that under the condition \eqref{small-condition-transition1},
  \begin{equation}\label{A1-estimate-small-T3}
    A_1(\sigma(T))\leq\mathcal{E}_0^{\frac{1}{2}}\leq\mathcal{E}_0^{\frac{3}{8}}
  \end{equation}
  and also by \eqref{A2-estimate-3} and \eqref{P-L^4-estimate2},
  \begin{equation}\label{nabla-u-P-L^4-estimate}
    \int_0^T\sigma^3\norm{\nabla u}_{L^4}^4\leq C(\tilde{\rho})A_1^{\frac{3}{2}}(T)A_2^{\frac{1}{2}}(T),\quad \int_0^T\int\sigma^3|P-\bar{P}|^4\leq C(\tilde{\rho})A_1^{\frac{3}{2}}(T)A_2^{\frac{1}{2}}(T).
  \end{equation}
  To estimate $A_1(T)$, it suffices to control $\displaystyle\int_{\sigma(T)}^T\int\sigma |P-\bar{P}||\nabla u|^2$ and $\displaystyle\int_{\sigma(T)}^T\sigma\norm{\nabla u}_{L^3}^3$. Due to \eqref{a-priori-assumption}, we have
  \begin{equation}\label{A1-estimate-2}
    C(\tilde{\rho})\int_{\sigma(T)}^T\sigma\norm{\nabla u}_{L^2}^4\leq C(\tilde{\rho})A_1^{\frac{1}{2}}(T)\int_{\sigma(T)}^T\sigma\norm{\nabla u}_{L^3}^3.
  \end{equation}
  From \eqref{nabla-u-P-L^4-estimate}, \eqref{essential-energy} and \eqref{a-priori-assumption}, we obtain\footnote{During this calculation, we observe that $A_1(T)^{\frac{3}{4}}A_2^{\frac{1}{4}}(T)E_0^{\frac{1}{2}}\ll A_1(T)$ needs $A_2(T)\ll A_1(T)$. And in \eqref{A2-estimate-4} we also need $A^{\frac{3}{2}}(T)\ll A_2(T)$. Thus we can ensure the setting of a priori assumption \eqref{a-priori-assumption}.}
  \begin{equation}\label{A1-estimate-3}
    \begin{aligned}
    &\quad\int_{\sigma(T)}^T\int\sigma |P-\bar{P}||\nabla u|^2\\
    &\leq\left(\int_{\sigma(T)}^T\norm{P-\bar{P}}_{L^4}^4\right)^{\frac{1}{4}}\left(\int_{\sigma(T)}^T\norm{\nabla u}_{L^4}^4\right)^{\frac{1}{4}}\left(\int_{\sigma(T)}^T\norm{\nabla u}_{L^2}^2\right)^{\frac{1}{2}}\\
    &\leq C(\tilde{\rho})A_1(T)^{\frac{3}{4}}A_2^{\frac{1}{4}}(T)E_0^{\frac{1}{2}}\\
    &\leq C(\tilde{\rho})\mathcal{E}_0^{\frac{13}{32}}E_0^{\frac{1}{2}}.
    \end{aligned}
  \end{equation}
  Similarly, it holds from \eqref{nabla-u-P-L^4-estimate}, \eqref{essential-energy} and \eqref{a-priori-assumption} that
  \begin{equation}\label{A1-estimate-4}
    \begin{aligned}
    \int_{\sigma(T)}^T\sigma\norm{\nabla u}_{L^3}^3&\leq\int_{\sigma(T)}^T\norm{\nabla u}_{L^2}\norm{\nabla u}_{L^4}^2\leq\left(\int_{\sigma(T)}^T\norm{\nabla u}_{L^2}^2\right)^{\frac{1}{2}}\left(\int_{\sigma(T)}^T\norm{\nabla u}_{L^4}^4\right)^{\frac{1}{2}}\\
    &\leq C(\tilde{\rho})\mathcal{E}_0^{\frac{13}{32}}E_0^{\frac{1}{2}}.
    \end{aligned}
  \end{equation}
  Then, plugging \eqref{A1-estimate-2}-\eqref{A1-estimate-4}, \eqref{A1-estimate-small-T2} and \eqref{a-priori-assumption} into \eqref{A1-estimate-1} yields that
  \begin{equation}\label{A1-estimate-5}
    \begin{aligned}
    A_1(T)&\leq A_1(\sigma(T))+C\int_{\sigma(T)}^T\int\sigma |P-\bar{P}||\nabla u|^2+C\int_{\sigma(T)}^T\sigma\norm{\nabla u}_{L^3}^3+C(\tilde{\rho})\int_{\sigma(T)}^T\sigma\norm{\nabla u}_{L^2}^4\\
    &\leq C(\tilde{\rho},M)\mathcal{E}_0(1+E_0)+C(\tilde{\rho})\mathcal{E}_0^{\frac{13}{32}}E_0^{\frac{1}{2}}(1+A_1^{\frac{1}{2}}(T))\\
    &\leq C(\tilde{\rho},M)\mathcal{E}_0(1+E_0)+C(\tilde{\rho})\mathcal{E}_0^{\frac{13}{32}}E_0^{\frac{1}{2}}\\
    &\leq \frac{1}{2}\mathcal{E}_0^{\frac{3}{8}}+C(\tilde{\rho},M)\mathcal{E}_0^{\frac{15}{16}}\leq\mathcal{E}_0^{\frac{3}{8}} 
    \end{aligned}
  \end{equation}
  provided
  \begin{equation*}
    \mathcal{E}_0\leq\min\{(2C(\tilde{\rho})E_0^{\frac{1}{2}})^{-32},(2C(\tilde{\rho},M))^{-\frac{16}{9}}\}.
  \end{equation*}
  Thus, we have completed the proof of Lemma \ref{lem-control-A1-A2}. 
\end{proof}

\begin{rem}
  Note that in this lemma, the condition $\norm{\nabla u_0}_{L^2}\leq M$ is only applied in \eqref{A1-estimate-small-T1} through Lemma \ref{lem-small-time-estimate}.
\end{rem}

Next lemma closes the estimate on bound of density $\rho$.

\begin{lem}\label{lem-rho-bound}
  Under the conditions of Proposition \ref{prop-a-priori-estimate}, it holds that for any $(x,t)\in\Omega\times[0,T]$,
  \begin{equation}\label{rho-bound}
    0\leq\rho(x,t)\leq\frac{7\tilde{\rho}}{4},
  \end{equation}
  provided
  \begin{equation}\label{small-assumption4}
    \mathcal{E}_0\leq\epsilon_5=\min\left\{\epsilon_4,(\frac{\tilde{\rho}}{2C(\tilde{\rho},M)})^{-12},(C(\tilde{\rho}))^{-1},(\frac{\tilde{\rho}}{4C(\tilde{\rho})(1+E_0)})^3\right\}.
  \end{equation}
\end{lem}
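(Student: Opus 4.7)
The plan is to apply Zlotnik's inequality (Lemma \ref{lem-Zlotnik-inequality}) along particle trajectories. Introducing the flow $X(t;x_0)$ defined by $\dot X=u(X,t)$, $X(0;x_0)=x_0$, and setting $y(t):=\rho(X(t;x_0),t)$, the continuity equation $\eqref{Large-CNS-eq}_1$ gives $y'(t)=-y\,\div u$, and combining with the identity $(2\mu+\lambda)\div u=G+P-\bar P$ yields
\begin{equation*}
y'(t)=-\frac{a}{2\mu+\lambda}\,y^{\gamma+1}+\frac{y\,\bar P(t)}{2\mu+\lambda}-\frac{y\,G(X(t),t)}{2\mu+\lambda}.
\end{equation*}
Using the a priori bound $y\le 2\tilde\rho$ from \eqref{a-priori-assumption} on the $y$-factor appearing in the last two terms, this becomes a one-sided differential inequality of the form $y'(t)\le g(y)+b'(t)$ with $g(y):=-\tfrac{a}{2\mu+\lambda}\,y^{\gamma+1}$ and $b'(t):=\tfrac{2\tilde\rho}{2\mu+\lambda}\bigl(\bar P(t)+|G(X(t),t)|\bigr)$. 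Since $g(\infty)=-\infty$, Lemma \ref{lem-Zlotnik-inequality} will give the desired upper bound on $y$ provided the sublinear growth condition \eqref{Zlotnik-condition1} is verified with $N_1$ small enough that the threshold $\zeta_0$ defined by $g(\zeta_0)=-N_1$ together with the overshoot $N_0$ satisfy $\max\{\tilde\rho,\zeta_0\}+N_0\le\tfrac{7\tilde\rho}{4}$.

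To check \eqref{Zlotnik-condition1}, I would split any interval $[t_1,t_2]\subset[0,T]$ through $\sigma(T)$. The $\bar P$-contribution is immediate because Lemma \ref{lem-essential-energy} gives $\bar P(t)\le \tfrac{(\gamma-1)E_0}{|\Omega|}$, producing a linear-in-time term with coefficient $C(\tilde\rho)(\gamma-1)E_0\le C(\tilde\rho)\mathcal{E}_0$. For the $G$-contribution on the short-time piece $[0,\sigma(T)]$, I would combine Lemma \ref{lem-small-time-estimate} with \eqref{curl-effective-viscous-estimate5} and the interpolation $\norm{G}_{L^\infty}\le C\norm{G}_{L^2}^{1/2}\norm{\nabla G}_{L^6}^{1/2}+C\norm{G}_{L^2}$ to get $\int_0^{\sigma(T)}\norm{G}_{L^\infty}\,ds\le C(\tilde\rho,M)\mathcal{E}_0^{\alpha}$ for some $\alpha>0$, which contributes only to $N_0$. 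On the long-time piece $[\sigma(T),T]$, the closed bounds $A_1(T)\le \mathcal{E}_0^{3/8}$ and $A_2(T)\le\mathcal{E}_0^{1/2}$ from Lemma \ref{lem-control-A1-A2}, combined with \eqref{curl-effective-viscous-estimate2} and a H\"older in time, give a decomposition
\begin{equation*}
\int_{\sigma(T)}^{t_2}\norm{G}_{L^\infty}\,ds\le C(\tilde\rho,M)(1+E_0)\mathcal{E}_0^{\beta}+C(\tilde\rho,M)(1+E_0)\mathcal{E}_0^{\gamma_*}(t_2-\sigma(T))
\end{equation*}
for suitable $\beta,\gamma_*>0$, the first piece being absorbed into $N_0$ and the second into $N_1$.

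Assembling the contributions yields $N_0\le C(\tilde\rho,M)(1+E_0)\mathcal{E}_0^{\min(\alpha,\beta)}$ and $N_1\le C(\tilde\rho,M)(1+E_0)\mathcal{E}_0^{\gamma_*}+C(\tilde\rho)\mathcal{E}_0$. Solving $g(\zeta_0)=-N_1$ explicitly gives $\zeta_0=\bigl(\tfrac{(2\mu+\lambda)N_1}{a}\bigr)^{1/(\gamma+1)}$, which can be forced below $\tfrac{3\tilde\rho}{2}$ by imposing $\mathcal{E}_0\le(\tfrac{\tilde\rho}{2C(\tilde\rho,M)})^{12}$, the first nontrivial factor in \eqref{small-assumption4}; the remaining factors in \eqref{small-assumption4} then secure $N_0\le\tfrac{\tilde\rho}{4}$. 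Since $y_0=\rho_0\le\tilde\rho$ by \eqref{initial-data2}, Lemma \ref{lem-Zlotnik-inequality} produces $y(t)\le \max\{\tilde\rho,\zeta_0\}+N_0\le \tfrac{3\tilde\rho}{2}+\tfrac{\tilde\rho}{4}=\tfrac{7\tilde\rho}{4}$, which is \eqref{rho-bound}. The lower bound $\rho\ge 0$ is immediate from the multiplicative structure $y'(t)=-y\,\div u$ with $\div u$ bounded on $\Omega\times[0,T]$.

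The main obstacle I expect is the long-time estimate for $\int_{\sigma(T)}^{t_2}\norm{G}_{L^\infty}\,ds$: extracting a Lipschitz-in-$t$ coefficient $N_1$ with a genuinely small power of $\mathcal{E}_0$ requires a careful interpolation of $\norm{G}_{L^\infty}$ against the $A_1,A_2$ bounds plus the basic energy, and it is precisely this interpolation that both forces the appearance of the constant $\tfrac{7}{4}$ in \eqref{rho-bound} and pins down each of the explicit smallness thresholds collected in \eqref{small-assumption4}.
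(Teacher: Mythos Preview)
Your overall strategy---rewrite the continuity equation along particle paths and apply Zlotnik's lemma, splitting the estimate of $b(t_2)-b(t_1)$ into short-time $[0,\sigma(T)]$ and long-time $[\sigma(T),T]$ pieces---is exactly what the paper does. The short-time piece is handled the same way in both, and your diagnosis that the main obstacle is the long-time $\int\norm{G}_{L^\infty}$ is correct.

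Where you diverge from the paper is in the long-time piece, and your handling there is a bit muddled. The paper does \emph{not} try to make $N_1$ small. Instead it applies Zlotnik separately on $[\sigma(T),T]$ with the fixed choice $N_1=\tfrac{\tilde\rho P(\tilde\rho)}{2\mu+\lambda}$: this is precisely $-g(\tilde\rho)$, so $\zeta_0=\tilde\rho$ automatically satisfies \eqref{Zlotnik-condition2}, and the whole burden falls on making $N_0=C(\tilde\rho)\mathcal{E}_0^{1/3}(1+E_0)$ small (which produces the factor $(\tfrac{\tilde\rho}{4C(\tilde\rho)(1+E_0)})^3$ in \eqref{small-assumption4}). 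The linear-in-time term comes for free from the Young split $\norm{G}_{L^\infty}\le \tfrac{P(\tilde\rho)}{2}+C\norm{G}_{L^\infty}^4$ together with the $L^4_t$ bound $\int_{\sigma(T)}^T\norm{G}_{L^\infty}^4\le C(\tilde\rho)\mathcal{E}_0^{1/3}(1+E_0)$ obtained from $A_1,A_2$. Your alternative---choosing $\delta$ so that both $N_0$ and $N_1$ carry small powers of $\mathcal{E}_0$---is also viable, but then $\zeta_0=((2\mu+\lambda)N_1/a)^{1/(\gamma+1)}$ is automatically $\ll\tilde\rho$ and needs no separate smallness condition; your attribution of the threshold $(\tfrac{\tilde\rho}{2C(\tilde\rho,M)})^{12}$ to ``forcing $\zeta_0\le\tfrac{3\tilde\rho}{2}$'' is therefore misplaced. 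In the paper that threshold arises instead from the \emph{short-time} estimate, where a delicate interpolation yields $\int_0^{\sigma(T)}\norm{G}_{L^\infty}\le C(\tilde\rho,M)\mathcal{E}_0^{1/12}$ (using $A_1(\sigma(T))\le\mathcal{E}_0^{1/2}$ from \eqref{A1-estimate-small-T3}), and one needs this $\le\tilde\rho/2$. The paper's calibration of $N_1$ against $g$ at $\tilde\rho$ is the cleaner move and is what pins down the explicit constants in \eqref{small-assumption4}.
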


\begin{proof}
  First, we rewrite the equation of mass conservation $\eqref{Large-CNS-eq}_1$ as
  \begin{equation}\label{mass-equation}
    D_t\rho=g(\rho)+b'(t),
  \end{equation}
  where
  \begin{equation*}
    D_t\rho=\rho_t+u\cdot\nabla\rho,\quad g(\rho)=-\frac{\rho P}{2\mu+\lambda},\quad b(t)=\frac{1}{2\mu+\lambda}\int_0^t(\rho\bar{P}-\rho G).
  \end{equation*}
  Then for $t\in[0,\sigma(T)]$, we obtain from \eqref{essential-energy}, \eqref{a-priori-assumption}, \eqref{GN-inequality2}, \eqref{A1-estimate-small-T3}, Lemma \ref{lem-curl-effective-viscous}, Lemma \ref{lem-dot-u} and Lemma \ref{lem-small-time-estimate} that for any $0\leq t_1<t_2\leq\sigma(T)$,
  \begin{equation}\label{b-estimate1}
    \begin{aligned}
    &\quad|b(t_2)-b(t_1)|\\
    &\leq C(\tilde{\rho})\mathcal{E}_0+C(\tilde{\rho})\int_0^{\sigma(T)}\norm{G}_{L^{\infty}}\\
    &\leq C(\tilde{\rho})\mathcal{E}_0+C(\tilde{\rho})\int_0^{\sigma(T)}\norm{G}_{L^6}^{\frac{1}{2}}\norm{\nabla G}_{L^6}^{\frac{1}{2}}\\
    &\leq C(\tilde{\rho})\mathcal{E}_0+C(\tilde{\rho})\int_0^{\sigma(T)}(\norm{\sqrt{\rho}\dot{u}}_{L^2}+\norm{\nabla u}_{L^2})^{\frac{1}{2}}(\norm{\nabla\dot{u}}_{L^2}+\norm{\nabla u}_{L^2}^2+\norm{\nabla u}_{L^2}+\norm{P-\bar{P}}_{L^6})^{\frac{1}{2}}\\
    &\leq C(\tilde{\rho})\mathcal{E}_0+C(\tilde{\rho})\left(\int_0^{\sigma(T)}t\norm{\nabla\dot{u}}_{L^2}^2\right)^{\frac{1}{4}}
    \left(\int_0^{\sigma(T)}t^{-\frac{1}{3}}\norm{\sqrt{\rho}\dot{u}}_{L^2}^{\frac{2}{3}}\right)^{\frac{3}{4}}\\
    &\quad+C(\tilde{\rho})A_1^{\frac{1}{4}}(\sigma(T))\int_0^{\sigma(T)}t^{-\frac{1}{2}}(t\norm{\nabla\dot{u}}_{L^2}^2)^{\frac{1}{4}}
    +C(\tilde{\rho},M)\int_0^{\sigma(T)}t^{-\frac{1}{4}}(\norm{\nabla u}_{L^2}+\norm{\nabla u}_{L^2}^{\frac{1}{2}}+\mathcal{E}_0^{\frac{1}{12}})\\
    &\leq C(\tilde{\rho})\mathcal{E}_0+C(\tilde{\rho},M)\left(\int_0^{\sigma(T)}t^{-\frac{2}{3}}(t\norm{\sqrt{\rho}\dot{u}}_{L^2}^2)^{\frac{1}{4}}\right)^{\frac{3}{4}}
    +C(\tilde{\rho},M)A_1^{\frac{1}{4}}(\sigma(T))+C(\tilde{\rho},M)\mathcal{E}_0^{\frac{1}{12}}\\
    &\leq C(\tilde{\rho},M)(A_1^{\frac{3}{16}}(\sigma(T))+\mathcal{E}_0^{\frac{1}{12}})\\
    &\leq C(\tilde{\rho},M)(\mathcal{E}_0^{\frac{3}{32}}+\mathcal{E}_0^{\frac{1}{12}})\leq C(\tilde{\rho},M)\mathcal{E}_0^{\frac{1}{12}}
    \end{aligned}
  \end{equation}
   provided $\mathcal{E}_0\leq\epsilon_4$. Then, by choosing $N_1=0$, $N_0=C(\tilde{\rho},M)\mathcal{E}_0^{\frac{1}{12}}$, and $\zeta_0=\tilde{\rho}$ in Lemma \ref{lem-Zlotnik-inequality}, we have from \eqref{mass-equation} and \eqref{b-estimate1} that
   \begin{equation}\label{rho-bound-small-T}
     \sup_{t\in[0,\sigma(T)]}\norm{\rho}_{L^{\infty}}\leq\tilde{\rho}+C(\tilde{\rho},M)\mathcal{E}_0^{\frac{1}{12}}\leq\frac{3\tilde{\rho}}{2}
   \end{equation}
   provided $\mathcal{E}_0\leq\min\left\{\epsilon_4,(\frac{\tilde{\rho}}{2C(\tilde{\rho},M)})^{-12}\right\}$.
   
   For $t\in[\sigma(T),T]$ and any $\sigma(T)\leq t_1<t_2\leq T$, we also have
   \begin{equation}\label{b-estimate2}
     \begin{aligned}
     |b(t_2)-b(t_1)|&\leq C\tilde{\rho}\int_{t_1}^{t_2}\norm{G}_{L^{\infty}}+C\int_{t_1}^{t_2}\rho\bar{P}\\
     &\leq \frac{(1+C(\tilde{\rho})\mathcal{E}_0)\tilde{\rho}P(\tilde{\rho})}{2(2\mu+\lambda)}(t_2-t_1)+C(\tilde{\rho})\int_{\sigma(T)}^{T}\norm{G}_{L^{\infty}}^4\\
     &\leq \frac{\tilde{\rho}P(\tilde{\rho})}{2\mu+\lambda}(t_2-t_1)+C(\tilde{\rho})\mathcal{E}_0^{\frac{1}{3}}(1+E_0), 
     \end{aligned}
   \end{equation}
   where we have used that by Lemma \ref{lem-curl-effective-viscous}, Lemma \ref{lem-dot-u}, \eqref{essential-energy} and \eqref{a-priori-assumption},
   \begin{equation*}
     \begin{aligned}
     &\quad\int_{\sigma(T)}^{T}\norm{G}_{L^{\infty}}^4
     \leq\int_{\sigma(T)}^{T}\norm{G}_{L^6}^2\norm{\nabla G}_{L^6}^2\\
     &\leq C\int_{\sigma(T)}^{T}(\norm{\rho\dot{u}}_{L^2}^2+\norm{\nabla u}_{L^2}^2)(\norm{\rho\dot{u}}_{L^6}^2+\norm{\nabla u}_{L^2}^2+\norm{P-\bar{P}}_{L^6}^2)\\
     &\leq C(\tilde{\rho})\int_{\sigma(T)}^{T}(\norm{\sqrt{\rho}\dot{u}}_{L^2}^2+\norm{\nabla u}_{L^2}^2)(\norm{\nabla\dot{u}}_{L^2}^2+\norm{\nabla u}_{L^2}^4+\norm{\nabla u}_{L^2}^2+\norm{P-\bar{P}}_{L^6}^2)\\
     &\leq C(\tilde{\rho})(A_1(T)A_2(T)+A_2^2(T)+A_1^3(T)+A_1^2(T)+A_1(T)\mathcal{E}_0^{\frac{1}{3}})+C(\tilde{\rho})(A_1^2(T)+A_1(T)+\mathcal{E}_0^{\frac{1}{3}})E_0\\
     &\leq C(\tilde{\rho})\mathcal{E}_0^{\frac{1}{3}}(1+E_0)
     \end{aligned}
   \end{equation*}
   provided $\mathcal{E}_0\leq\min\{1,(C(\tilde{\rho}))^{-1}\}=\hat{\epsilon}_4$.
   
   Therefore, by choosing $N_0=C(\tilde{\rho})\mathcal{E}_0^{\frac{1}{3}}(1+E_0)$, $N_1=\frac{\tilde{\rho}P(\tilde{\rho})}{2\mu+\lambda}$ and $\zeta_0=\tilde{\rho}$ in Lemma \ref{lem-Zlotnik-inequality}, we have from Lemma \ref{lem-Zlotnik-inequality}, \eqref{rho-bound-small-T} and \eqref{b-estimate2} that
   \begin{equation}\label{rho-bound-large-T}
     \sup_{t\in[\sigma(T),T]}\norm{\rho}_{L^{\infty}}\leq\frac{3\tilde{\rho}}{2}
     +C(\tilde{\rho})\mathcal{E}_0^{\frac{1}{3}}(1+E_0)\leq\frac{7\tilde{\rho}}{4},
   \end{equation}
   provided $\mathcal{E}_0\leq\min\{\hat{\epsilon}_4,(\frac{\tilde{\rho}}{4C(\tilde{\rho})(1+E_0)})^3\}$. Then, combining \eqref{rho-bound-small-T} and \eqref{rho-bound-large-T}, we complete the proof of Lemma \ref{lem-rho-bound}.
\end{proof}

With Proposition \ref{prop-a-priori-estimate} well-prepared, we are now in a position to prove the following result concerning the exponential decay rate of classical solutions.

\begin{pro}\label{prop-energy-decay}
Assume $\bar{\rho}\leq 1$ and $\frac{\tilde{\rho}}{\bar{\rho}}\geq3$. Then for any $\gamma\in(1,\frac{3}{2}]$, $r\in[1,\infty)$ and $p\in[1,6]$, there exist two positive constants $C$ and $\eta_0$ with $C$ depending only on $\mu,\lambda,\gamma,a,\tilde{\rho},\bar{\rho},M,\Omega,r,p$ and the matrix $A$, and $\eta_0$ depending only on $\mu,\lambda,a,\Omega,\tilde{\rho},r,p$ and $\frac{\tilde{\rho}}{\bar{\rho}}$, but independent of $\gamma-1$, such that for any $t\geq1$, it holds that
\begin{equation}\label{energy-decay-prop}
  \norm{\rho-\bar{\rho}}_{L^r}+\norm{u}_{W^{1,p}}+\norm{\sqrt{\rho}\dot{u}}_{L^2}\leq Ce^{-\eta_0\bar{\rho}^{\gamma}t}.
\end{equation} 
\end{pro}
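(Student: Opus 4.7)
The plan is to derive a closed differential inequality of the form $\frac{d}{dt}\mathcal{G}(t)+\eta_0\bar\rho^{\gamma}\mathcal{G}(t)\le 0$ for $t\ge 1$, where $\mathcal{G}(t)$ is an energy functional comparable to $\norm{\sqrt\rho\,u}_{L^2}^2+\int\Psi(\rho)+\sigma^{2}\norm{\sqrt\rho\dot u}_{L^2}^2$ with $\Psi(\rho):=\tfrac{1}{\gamma-1}\bigl(P(\rho)-P(\bar\rho)-P'(\bar\rho)(\rho-\bar\rho)\bigr)$, and then upgrade the resulting $L^{2}$-type exponential decay to the full conclusion via interpolation, Sobolev embedding, and the elliptic regularity of Lemma~\ref{lem-curl-effective-viscous}. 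Because $t\ge 1$ implies $\sigma(t)=1$, Proposition~\ref{prop-a-priori-estimate} and Lemma~\ref{lem-rho-bound} already supply the uniform-in-time bounds $\norm{\nabla u}_{L^2}^{2}+\norm{\sqrt\rho\dot u}_{L^2}^{2}\le C\mathcal{E}_0^{1/2}$ and $\rho\le 2\tilde\rho$; moreover by mass conservation $\bar\rho(t)\equiv\bar\rho$, so the "target equilibrium'' is fixed.

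First, I would rewrite the basic energy identity of Lemma~\ref{lem-essential-energy} in its relative form, using $\int(\rho-\bar\rho)=0$ to subtract the affine part of the pressure potential, thereby obtaining
\begin{equation*}
  \frac{d}{dt}\!\int\!\Bigl(\tfrac12\rho|u|^{2}+\Psi(\rho)\Bigr)+\!\int\!\bigl(\mu|\curl u|^{2}+(2\mu+\lambda)|\div u|^{2}\bigr)+\mu\!\int_{\pa\Omega}\!Au\cdot u=0.
\end{equation*}
Second, I would test the momentum equation against the Bogovskii corrector $\mathcal{B}[\rho-\bar\rho]$ from Lemma~\ref{lem-Bogovskii-operator}; standard manipulations produce an identity of the form
\begin{equation*}
  \int(P(\rho)-\bar P)(\rho-\bar\rho)\le \delta\norm{\nabla u}_{L^{2}}^{2}+C(\delta)\bigl(\norm{\sqrt\rho u}_{L^{2}}^{2}+\norm{\sqrt\rho\dot u}_{L^{2}}^{2}\bigr)+\text{h.o.t.},
\end{equation*}
where higher-order terms are absorbed by the a~priori smallness $\mathcal{E}_0\le\epsilon$, the bound $\norm{A}_{W^{1,6}}\le\hat\epsilon$, and the pointwise density bound. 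Adding a small multiple of this identity to the energy identity converts the time-integral control on the pressure fluctuation into a pointwise-in-time coercivity. Third, I would invoke the crucial Lemma~\ref{lem-relationship} (promised in the introduction) which, under $\bar\rho\le 1$ and $\tilde\rho/\bar\rho\ge 3$, provides an explicit two-sided comparison
\begin{equation*}
  c_{1}\,\bar\rho^{\gamma}(\rho-\bar\rho)^{2}\le (P(\rho)-P(\bar\rho))(\rho-\bar\rho),\qquad \Psi(\rho)\le C_{1}\bar\rho^{\gamma-1}(\rho-\bar\rho)^{2}+\text{correction},
\end{equation*}
with constants uniform in $\gamma\in(1,\tfrac32]$ (so that the $\tfrac{1}{\gamma-1}$ in $\Psi$ cancels against a factor $(\gamma-1)$ produced by Taylor expansion). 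Plugging this comparison into the combined inequality yields exactly the desired absorbing coefficient $\eta_0\bar\rho^{\gamma}$ in front of $\mathcal{G}$, and Gronwall delivers $\mathcal{G}(t)\le Ce^{-\eta_0\bar\rho^{\gamma}t}$.

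To pass from the $L^{2}$ decay of $\mathcal{G}$ to the stated decay of $\norm{\rho-\bar\rho}_{L^{r}}+\norm{u}_{W^{1,p}}+\norm{\sqrt\rho\dot u}_{L^{2}}$, I would interpolate between the exponentially decaying $\norm{\rho-\bar\rho}_{L^{2}}$ and the uniform $L^{\infty}$-bound $0\le\rho\le 2\tilde\rho$ (for $r>2$) or the conservation $\int(\rho-\bar\rho)=0$ (for $r<2$); for the velocity, I would combine the decay of $\norm{\nabla u}_{L^{2}}$ and $\norm{\sqrt\rho\dot u}_{L^{2}}$ with the elliptic bounds \eqref{curl-effective-viscous-estimate6} and \eqref{dot-u-estimate1} to upgrade to $W^{1,p}$ for $p\le 6$. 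The pointwise-in-time control of $\sigma\norm{\sqrt\rho\dot u}_{L^{2}}^{2}$ with exponential decay is obtained by repeating the $A_{2}$-type estimate of Lemma~\ref{lem-A1-A2-control} on the interval $[t-1,t]$ for each $t\ge 1$ and then inserting the exponentially decaying bound for $\mathcal{G}$.

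The main obstacle will be verifying that the absorbing constant in the differential inequality really scales like $\bar\rho^{\gamma}$ rather than $\bar\rho^{\gamma-1}$ or $1$, and that all error terms — boundary contributions generated by the slip condition $\curl u\times n=-Au$, the nonlinear convection $\int \rho(u\cdot\nabla u)\cdot\mathcal{B}[\rho-\bar\rho]$, and the $P\div u$ coupling — are absorbed either into the viscous dissipation or into $\eta_0\bar\rho^{\gamma}\mathcal{G}$. This demands careful use of the smallness of $\mathcal{E}_0$ and $\norm{A}_{W^{1,6}}$, combined with the bounds of Proposition~\ref{prop-a-priori-estimate}, so that the estimates remain uniform in $\gamma-1$; in particular, the anticipated Lemma~\ref{lem-relationship} is unavoidable because the naive relative-entropy comparison would lose a factor of $\gamma-1$ and destroy the uniformity.
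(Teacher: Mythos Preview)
Your proposal identifies the correct skeleton—relative energy identity (your $\Psi(\rho)$ coincides with the paper's $G(\rho)$), the Bogovskii corrector, Lemma~\ref{lem-relationship}, Gronwall at rate $\bar\rho^{\gamma}$, then bootstrap—and this is precisely the paper's route. Two technical points, however, do not match and the first one matters.

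First, the form of the Bogovskii step. What you write (no time derivative, $C(\delta)\|\sqrt\rho\dot u\|_{L^2}^{2}$ on the right) is the ``$\rho\dot u$-form''. The paper instead tests the \emph{conservative} momentum equation $\partial_t(\rho u)+\div(\rho u\otimes u)+\nabla P=\mu\Delta u+(\mu+\lambda)\nabla\div u$ against $\mathcal B[\rho-\bar\rho]$, which produces the time-derivative term $\frac{d}{dt}\int\rho u\cdot\mathcal B[\rho-\bar\rho]$; after a weighted Young inequality this gives
\[
\int(P(\rho)-P(\bar\rho))(\rho-\bar\rho)\le \frac{d}{dt}\!\int 2\rho u\cdot\mathcal B[\rho-\bar\rho]+C_1(\tilde\rho)\,\bar\rho^{\,1-\gamma}\phi(t),
\]
with \emph{only} $\phi(t)=(2\mu+\lambda)\|\div u\|_{L^2}^2+\mu\|\curl u\|_{L^2}^2$ on the right. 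This is exactly what makes the first-stage decay close: multiply by $\delta_1\bar\rho^{\gamma-1}$, add to the basic energy identity, and use $\bar\rho\,G(\rho)\le (P(\rho)-P(\bar\rho))(\rho-\bar\rho)$ from Lemma~\ref{lem-relationship} to recover the absorbing coefficient $\bar\rho^{\gamma}$. With your form you are forced to bring in the $A_1$-type estimate to supply $\|\sqrt\rho\dot u\|_{L^2}^{2}$ as dissipation, but that estimate (see \eqref{1rd-energy-modify1}) carries $C(\tilde\rho)\int G(\rho)$ on its right-hand side, a term that is \emph{not} small relative to $\bar\rho^{\gamma}\int G$; the single-functional closure then fails at the desired rate.

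Second, and consequently, the paper does not build one master functional containing $\|\sqrt\rho\dot u\|_{L^2}^{2}$. It proceeds in three stages: (i) decay of $\int(\tfrac12\rho|u|^2+G(\rho))$ together with the weighted integrability $\int_0^\infty e^{\delta_3\bar\rho^{\gamma}t}\phi(t)\,dt<\infty$; (ii) multiply the $m=0$ version of \eqref{A-control2} by $e^{\delta_3\bar\rho^{\gamma}t}$ and integrate, using (i) to control the right-hand side, which yields $\|\nabla u\|_{L^2}^2\le C e^{-\delta_3\bar\rho^{\gamma}t}$; (iii) repeat with the $A_2$-type estimate \eqref{A-control-3} to obtain $\|\sqrt\rho\dot u\|_{L^2}^2\le Ce^{-\delta_3\bar\rho^{\gamma}t}$. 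Your last paragraph anticipates a bootstrap of this kind for $\|\sqrt\rho\dot u\|$, and step (ii) is precisely what supplies the $\|\nabla u\|_{L^2}$ decay that you later invoke but do not derive from your $\mathcal G$ (note $\|\nabla u\|_{L^2}^2$ is absent from your description of $\mathcal G$). So: keep your outline, but replace the $\rho\dot u$-Bogovskii by the time-derivative version and organize the argument as a three-stage cascade rather than a single Gronwall.
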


\begin{proof}
  First, by virtue of the mass conservation, we have
  \begin{equation*}
    \bar{\rho}=\frac{1}{|\Omega|}\int_{\Omega}\rho dx=\bar{\rho}_0.
  \end{equation*}
  Then, by the convexity of $P(\rho)$ and \eqref{essential-energy}, it holds that
  \begin{equation}\label{bound-rho-average}
    P(\bar{\rho})\leq\bar{P}(\rho)\leq |\Omega|^{-1}(\gamma-1)E_0\leq |\Omega|^{-1}\mathcal{E}_0,\quad\textrm{i.e.},\quad\bar{\rho}\leq (a|\Omega|)^{-\frac{1}{\gamma}}\mathcal{E}_0^{\frac{1}{\gamma}}.
  \end{equation}
  Here we shall give the relationship among $(\rho-\bar{\rho})^2$, $G(\rho)$ and $(P(\rho)-P(\bar{\rho}))(\rho-\bar{\rho})$ with 
  \begin{equation*}
    G(\rho)=\rho\int_{\bar{\rho}}^{\rho}\frac{P(s)-P(\bar{\rho})}{s^2}ds
  \end{equation*}
  for any $\rho\in[0,2\tilde{\rho}]$ and $\gamma\in(1,\frac{3}{2}]$. Note that $\bar{\rho}\leq1$ and $\bar{\rho}\ll\tilde{\rho}$ if $\mathcal{E}_0\leq a|\Omega|$ is small enough.
  
  A direct analysis on three terms above yields that (see Lemma \ref{lem-relationship} in Appendix for details)
  \begin{equation}\label{relationship-inequality}
  \begin{aligned}
  &\frac{P(\bar{\rho})}{\bar{\rho}}(\rho-\bar{\rho})^2\leq(P(\rho)-P(\bar{\rho}))(\rho-\bar{\rho}),\\
  &(\rho-\bar{\rho})^2\leq\frac{1}{C_1}\tilde{\rho}\bar{\rho}^{1-\gamma}G(\rho),\\
  &\bar{\rho}G(\rho)\leq(P(\rho)-P(\bar{\rho}))(\rho-\bar{\rho}).
  \end{aligned}
\end{equation}
with constant $C_1>0$ depending only on $a$ and $\frac{\tilde{\rho}}{\bar{\rho}}$.

Similar to \eqref{essential-energy-estimate}, we have
\begin{equation}\label{essential-energy-modify1}
  \frac{d}{dt}\int(\frac{1}{2}\rho|u|^2+G(\rho))+\phi(t)\leq0
\end{equation}  
with 
\begin{equation*}
  \phi(t)=(2\mu+\lambda)\norm{\div u}_{L^2}^2+\mu\norm{\curl u}_{L^2}^2.
\end{equation*}
Then multiplying $\eqref{Large-CNS-eq}_2$ by $\mathcal{B}[\rho-\bar{\rho}]$, we have fromLemma \ref{lem-Bogovskii-operator} and Proposition \ref{prop-a-priori-estimate} that
\begin{equation}\label{rho-L^2-bound1}
  \begin{aligned}
  &\quad\int(P(\rho)-P(\bar{\rho}))(\rho-\bar{\rho})\\
  &=\frac{d}{dt}\int\rho u\cdot\mathcal{B}[\rho-\bar{\rho}]-\int\rho u\cdot\nabla\mathcal{B}[\rho-\bar{\rho}]\cdot u+\int\rho u\cdot\mathcal{B}[\div(\rho u)]+\mu\int\nabla u\cdot\nabla\mathcal{B}[\rho-\bar{\rho}]\\
  &\quad+(\mu+\lambda)\int(\rho-\bar{\rho})\div u\\
  &\leq\frac{d}{dt}\int\rho u\cdot\mathcal{B}[\rho-\bar{\rho}]+C\norm{\sqrt{\rho}u}_{L^4}^2\norm{\rho-\bar{\rho}}_{L^2}
  +C\norm{\rho u}_{L^2}^2+C\norm{\nabla u}_{L^2}\norm{\rho-\bar{\rho}}_{L^2}\\
  &\leq\frac{d}{dt}\int\rho u\cdot\mathcal{B}[\rho-\bar{\rho}]+C(\tilde{\rho})\norm{\rho^{\frac{1}{3}}u}_{L^3}\norm{u}_{L^6}\norm{\rho-\bar{\rho}}_{L^2}+C(\tilde{\rho})\norm{\nabla u}_{L^2}^2+C\norm{\nabla u}_{L^2}\norm{\rho-\bar{\rho}}_{L^2}\\
  &\leq\frac{d}{dt}\int\rho u\cdot\mathcal{B}[\rho-\bar{\rho}]+\frac{P(\bar{\rho})}{2\bar{\rho}}\norm{\rho-\bar{\rho}}_{L^2}^2+C(\tilde{\rho})\frac{\bar{\rho}}{P(\bar{\rho})}\norm{\nabla u}_{L^2}^2,
  \end{aligned}
\end{equation}
which together with \eqref{relationship-inequality} and \eqref{curl-effective-viscous-estimate1} yields
\begin{equation}\label{rho-L^2-bound2}
  \begin{aligned}
  \int(P(\rho)-P(\bar{\rho}))(\rho-\bar{\rho})&\leq\frac{d}{dt}\int2\rho u\cdot\mathcal{B}[\rho-\bar{\rho}]+C(\tilde{\rho})\bar{\rho}^{1-\gamma}\norm{\nabla u}_{L^2}^2\\
  &\leq\frac{d}{dt}\int2\rho u\cdot\mathcal{B}[\rho-\bar{\rho}]+C_1(\tilde{\rho})\bar{\rho}^{1-\gamma}\phi(t).
  \end{aligned}
\end{equation}
Moreover, it follows from \eqref{relationship-inequality} that there exists a constant $C_2(\tilde{\rho})>0$ such that
\begin{equation}\label{W-transition1}
  |\int2\rho u\cdot\mathcal{B}[\rho-\bar{\rho}]|\leq C_2(\tilde{\rho})\bar{\rho}^{\frac{1-\gamma}{2}}\int(\frac{1}{2}\rho|u|^2+G(\rho)), 
\end{equation}
which combined with \eqref{essential-energy-modify1} and \eqref{rho-L^2-bound2} gives that
\begin{equation}\label{essential-energy-modify2}
  \frac{d}{dt}W(t)+\delta_1\bar{\rho}^{\gamma-1}\int(P(\rho)-P(\bar{\rho}))(\rho-\bar{\rho})+\frac{1}{2}\phi(t)\leq0
\end{equation}
with
\begin{equation*}
  W(t)=\int(\frac{1}{2}\rho|u|^2+G(\rho))-\delta_1\bar{\rho}^{\gamma-1}\int2\rho u\cdot\mathcal{B}[\rho-\bar{\rho}],\quad\delta_1=\min\left\{\frac{1}{2C_1(\tilde{\rho})},\frac{1}{2C_2(\tilde{\rho})}\right\}
\end{equation*}
satisfying
\begin{equation}\label{W-bound}
  \frac{1}{2}\int(\frac{1}{2}\rho|u|^2+G(\rho))\leq W(t)\leq\frac{3}{2}\int(\frac{1}{2}\rho|u|^2+G(\rho)).
\end{equation}
due to \eqref{W-transition1} and $\bar{\rho}\leq1$.

By virtue of Poincar\'{e} inequality \eqref{Poincare-inequality} and \eqref{curl-effective-viscous-estimate1}, we have
\begin{equation*}
  \int\rho|u|^2\leq C(\tilde{\rho})\norm{\nabla u}_{L^2}\leq C_3(\tilde{\rho})\phi(t),
\end{equation*}
which together with \eqref{relationship-inequality} implies that
\begin{equation}\label{diffusion-compare1}
  \delta_1\bar{\rho}^{\gamma-1}\int(P(\rho)-P(\bar{\rho}))(\rho-\bar{\rho})+\frac{1}{2}\phi(t)\geq\delta_2\bar{\rho}^{\gamma}W(t)
\end{equation}
with $\delta_2=\frac{2}{3}\min\{\delta_1,\frac{1}{C_3(\tilde{\rho})}\}$.

Thus we deduce from \eqref{essential-energy-modify2} and \eqref{diffusion-compare1} that
\begin{equation*}
  \frac{d}{dt}W(t)+\delta_2\bar{\rho}^{\gamma}W(t)\leq 0,
\end{equation*} 
which combined with \eqref{W-bound} yields that
\begin{equation}\label{energy-decay1}
  \int(\frac{1}{2}\rho|u|^2+G(\rho))\leq 2E_0e^{-\delta_2\bar{\rho}^{\gamma}t}.
\end{equation}
Furthermore, it holds from \eqref{essential-energy-modify1} that for $0<\delta_3<\delta_2$,
\begin{equation}\label{energy-decay1-byproduct}
  \int_0^{\infty}\phi(t)e^{\delta_3\bar{\rho}^{\gamma}t}dt\leq CE_0.
\end{equation}
Choosing $m=0$ in \eqref{A-control2}, and using \eqref{small-time-estimate3}, \eqref{curl-effective-viscous-estimate1} and \eqref{curl-effective-viscous-estimate2}, we have
\begin{equation}\label{1rd-energy-modify1}
  \begin{aligned}
  &\quad\frac{d}{dt}\left(\phi(t)+\mu\int_{\pa\Omega}Au\cdot u-\int2(P(\rho)-P(\bar{\rho}))\div u\right)+\int\rho|\dot{u}|^2\\
  &\leq C(\tilde{\rho},M)\norm{\nabla u}_{L^2}^2+C\norm{\nabla u}_{L^2}^{\frac{3}{2}}(\norm{\rho\dot{u}}_{L^2}+\norm{\nabla u}_{L^2}+\norm{P(\rho)-P(\bar{\rho})}_{L^6})^{\frac{3}{2}}\\
  &\leq\frac{1}{2}\norm{\sqrt{\rho}\dot{u}}_{L^2}^2+C(\tilde{\rho},M)\norm{\nabla u}_{L^2}^2+C(\tilde{\rho})\norm{P(\rho)-P(\bar{\rho})}_{L^2}^2\\
  &\leq\frac{1}{2}\norm{\sqrt{\rho}\dot{u}}_{L^2}^2+C(\tilde{\rho},M)\phi(t)+C(\tilde{\rho})\int G(\rho), 
  \end{aligned}
\end{equation}
where we have used from \eqref{relationship-inequality}, \eqref{k-another-bound2} and \eqref{k-another-bound3} that\footnote{Indeed, from \eqref{relationship-inequality}, it holds that $\norm{\rho-\tilde{\rho}}_{L^2}^2\leq C(\tilde{\rho})\bar{\rho}^{1-\gamma}\int G(\rho)$. By \eqref{small-assumption3} and \eqref{bound-rho-average}, it is easy to check that $(\gamma-1)^{\frac{1}{17}}E_0\leq C(\tilde{\rho})$ and hence $\bar{\rho}\leq C(\tilde{\rho})(\gamma-1)^{\frac{16}{17\gamma}}$. If $\bar{\rho}=C(\gamma-1)^{\eta}$, then $\bar{\rho}^{1-\gamma}$ is bounded as $\gamma\rightarrow1$ since $(\gamma-1)^{\gamma-1}\rightarrow1$ when $\gamma-1\rightarrow0$.}
\begin{equation}\label{P-L^2-estimate}
  \norm{P(\rho)-P(\bar{\rho})}_{L^2}^2\leq C(\tilde{\rho})\int(P(\rho)-P(\bar{\rho}))(\rho-\bar{\rho})\leq C(\tilde{\rho})\int G(\rho).
\end{equation}
By multiplying \eqref{1rd-energy-modify1} by $e^{\delta_3\bar{\rho}^{\gamma}t}$, we have
\begin{equation}\label{1rd-energy-modify2}
  \begin{aligned}
  &\quad\frac{d}{dt}\left(e^{\delta_3\bar{\rho}^{\gamma}t}\phi(t)+\mu e^{\delta_3\bar{\rho}^{\gamma}t}\int_{\pa\Omega}Au\cdot u-e^{\delta_3\bar{\rho}^{\gamma}t}\int2(P(\rho)-P(\bar{\rho}))\div u\right)+\frac{1}{2}e^{\delta_3\bar{\rho}^{\gamma}t}\int\rho|\dot{u}|^2\\
  &\leq C(\tilde{\rho},M)e^{\delta_3\bar{\rho}^{\gamma}t}(\phi(t)+\int G(\rho)),
  \end{aligned}
\end{equation}
where we have used the facts that 
\begin{equation*}
  0\leq\int_{\pa\Omega}Au\cdot u\leq C\norm{\nabla u}_{L^2}^2\leq C\phi(t),
\end{equation*}
and 
\begin{equation*}
  \left|\int2(P(\rho)-P(\bar{\rho}))\div u\right|\leq C(\tilde{\rho})\int G(\rho)+\frac{1}{2}\phi(t),
\end{equation*} due to \eqref{P-L^2-estimate}.
This together with \eqref{energy-decay1} and \eqref{energy-decay1-byproduct} yields that
\begin{equation}\label{energy-decay2}
  \norm{\nabla u}_{L^2}^2\leq C(\tilde{\rho},M)e^{-\delta_3\bar{\rho}^{\gamma}t}
\end{equation}
and
\begin{equation}\label{energy-decay2-byproduct}
  \int_0^{\infty}e^{\delta_3\bar{\rho}^{\gamma}t}\norm{\sqrt{\rho}\dot{u}}_{L^2}^2\leq C(\tilde{\rho},M).
\end{equation}
A similar analysis based on \eqref{A-control-3} for $m=3$, \eqref{boundary-term-A2-transition2}, Lemma \ref{lem-small-time-estimate}, \eqref{P-L^2-estimate}, \eqref{energy-decay2} and \eqref{energy-decay2-byproduct} gives that
\begin{equation}\label{energy-decay3}
  \norm{\sqrt{\rho}\dot{u}}_{L^2}^2\leq Ce^{-\delta_3\bar{\rho}^{\gamma}t},
\end{equation}
which together with \eqref{energy-decay1}, \eqref{energy-decay2}, \eqref{relationship-inequality} and \eqref{curl-effective-viscous-estimate6} yields \eqref{energy-decay-prop} and finishes the proof of Proposition \ref{prop-energy-decay}.  
  
\end{proof}

\noindent\textbf{Proof of Theorem \ref{thm-global-CNS}} In the following, we will prove the main results of this paper. First of all, we derive the time-dependent higher-order estimates of the smooth solution $(\rho, u)$.
From now on, we will always assume that \eqref{small-assumption4} holds and denote the positive constant by $C$ depending on
\begin{equation*}
  T,\,\norm{g}_{L^2},\,\norm{\nabla u_0}_{H^1},\,\norm{\rho_0}_{W^{2,q}},\,\norm{P(\rho)}_{W^{2,q}},
\end{equation*}
for $q\in(3,6)$, as well as $\mu,\lambda,\gamma,a,\tilde{\rho},\Omega,M$ and the matrix $A$, where $g$ is given in \eqref{compatibility-condition}. Here we only sketch the higher-order estimates in the following lemma, which have been proved in \cite{Cai-Li2023}.

\begin{lem}\label{lem-higher-order-estimate}
  Under the conditions of Theorem \ref{thm-global-CNS}, it holds that
  \begin{equation*}
    \begin{aligned}
    &\sup_{t\in[0,T]}\int\rho|\dot{u}|^2+\int_0^T\norm{\nabla\dot{u}}_{L^2}^2\leq C,\\
    &\sup_{t\in[0,T]}(\norm{\nabla\rho}_{L^6}+\norm{u}_{H^2})+\int_0^T(\norm{\nabla u}_{L^{\infty}}+\norm{\nabla^2u}_{L^6}^2)\leq C,\\
    &\sup_{t\in[0,T]}\norm{\sqrt{\rho}u_t}_{L^2}^2+\int_0^T\norm{\nabla u_t}_{L^2}^2\leq C,\\
    &\sup_{t\in[0,T]}(\norm{\rho}_{H^2}+\norm{P}_{H^2})\leq C,\\
    &\sup_{t\in[0,T]}(\norm{\rho_t}_{H^1}+\norm{P_t}_{H^1})+\int_0^T(\norm{\rho_{tt}}_{L^2}^2+\norm{P_{tt}}_{L^2}^2)\leq C,\\
    &\sup_{t\in[0,T]}\sigma\norm{\nabla u_t}_{L^2}^2+\int_0^T\sigma\norm{\sqrt{\rho}u_{tt}}_{L^2}\leq C,\\
    &\sup_{t\in[0,T]}\sigma\norm{\nabla u}_{H^2}^2+\int_0^T(\norm{\nabla u}_{H^2}^2+\norm{\nabla^2u}_{W^{1,q}}^{p_0}+\sigma\norm{\nabla u_t}_{H^1}^2)\leq C,\\
    &\sup_{t\in[0,T]}(\norm{\rho}_{W^{2,q}}+\norm{P}_{W^{2,q}})\leq C,\\
    &\sup_{t\in[0,T]}\sigma(\norm{\nabla u_t}_{H^1}+\norm{\nabla u}_{W^{2,q}})+\int_0^T\sigma^2\norm{\nabla u_{tt}}_{L^2}^2\leq C,
    \end{aligned}
  \end{equation*}
  for $q\in(3,6)$ and $p_0=\frac{9q-6}{10q-12}\in(1,\frac{7}{6})$.
\end{lem}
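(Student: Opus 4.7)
The plan is to bootstrap from the uniform (in $T$) lower-order a priori estimates furnished by Proposition \ref{prop-a-priori-estimate} and Proposition \ref{prop-energy-decay} to the time-dependent higher-order estimates listed. The crucial enabler is the compatibility condition \eqref{compatibility-condition}: it identifies $\sqrt{\rho_0}\dot u|_{t=0} = g \in L^2$, which is the initial datum required to start an \emph{unweighted} energy estimate for $\dot u$ (in contrast to the $\sigma$-weighted ones in Lemma \ref{lem-A1-A2-control}). Throughout, the strategy is to alternate between elliptic regularity for $u$ (via the Lam\'e structure of $\eqref{Large-CNS-eq}_2$) and transport regularity for $\rho$ (via the mass-conservation equation).

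First I would reuse the identity \eqref{A-control2} with $m=0$, applying Gr\"onwall together with Proposition \ref{prop-a-priori-estimate} and the compatibility condition to absorb the initial datum $\|g\|_{L^2}^2$; this yields the bound on $\sup_{[0,T]}\|\sqrt{\rho}\dot u\|_{L^2}^2 + \int_0^T\|\nabla\dot u\|_{L^2}^2$. Next, rewriting $\eqref{Large-CNS-eq}_2$ as the Lam\'e system $-\mu\Delta u - (\mu+\lambda)\nabla\div u = -\rho\dot u - \nabla P$, I would apply Lemma \ref{lem-Lame-estimate} for $p = 2, 6, q$ to bound $\|u\|_{W^{2,p}}$ by $\|\rho\dot u\|_{L^p} + \|\nabla P\|_{L^p} + \|u\|_{L^p}$. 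Spatial differentiation of $\eqref{Large-CNS-eq}_1$ then produces transport equations for $\nabla\rho$ and $\nabla^2\rho$ whose forcing is controlled by $\|\nabla u\|_{L^\infty}$ and $\|\nabla^2 u\|_{L^6}$; coupling elliptic and transport estimates through a Gr\"onwall loop yields the $L^\infty_t W^{1,6}$, $L^\infty_t H^2$, and ultimately $L^\infty_t W^{2,q}$ bounds for $\rho$ and $P$, along with $\nabla u \in L^2_t L^\infty$ via Sobolev embedding.

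For the time-regularity estimates, I would time-differentiate the momentum equation and test against $u_t$ to obtain the bounds on $\sqrt{\rho}\,u_t$ and $\nabla u_t$, and time-differentiate the mass conservation equation to produce the $\rho_t, \rho_{tt}, P_t, P_{tt}$ estimates using the spatial regularity already in hand. For the $\sigma$-weighted second-order time bounds (items 6, 7, and 9), I would differentiate $\eqref{Large-CNS-eq}_2$ twice in $t$, test against $\sigma^m u_{tt}$, and exploit the $\sigma$-weight to absorb $t = 0$ singularities, following the template of Lemma \ref{lem-A1-A2-control}. The remaining $L^\infty_t W^{2,q}$ estimate on $\nabla u$ then follows from another application of Lemma \ref{lem-Lame-estimate} to the momentum equation with now-improved right-hand side.

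The main obstacle will be controlling the boundary contributions forced by \eqref{Navier-slip-condition} at each level of regularity: multiplying by $\dot u$, $u_t$, or $u_{tt}$ produces non-standard boundary integrals such as $\int_{\pa\Omega} A u_t\cdot u_{tt}$ or $\int_{\pa\Omega} G_t\,(u\cdot\nabla n\cdot u)$, which must be converted into interior integrals via the divergence theorem together with the identities $\dot u \cdot n = -u\cdot\nabla n\cdot u$ and $\curl u\times n = -Au$, exactly in the spirit of Lemmas \ref{lem-dot-u} and \ref{lem-A1-A2-control}. A second delicate point, enforced by the presence of vacuum, is that one cannot control $\|u\|_{L^2}$ directly but only weighted norms; this makes systematic use of Poincar\'e's inequality (valid because $u\cdot n|_{\pa\Omega}=0$) and the interpolation estimates of Lemma \ref{lem-curl-effective-viscous} unavoidable throughout. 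Importantly, the smallness of $\mathcal{E}_0$ and $\|A\|_{W^{1,6}}$ is no longer needed at this stage, since the lower-order uniform-in-$T$ bounds have already been closed in Proposition \ref{prop-a-priori-estimate}; the constant $C$ is simply permitted to depend on $T$ and on the higher-order initial norms.
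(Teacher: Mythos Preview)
Your outline is correct and matches the standard bootstrapping scheme the paper defers to (the paper supplies no argument of its own here, citing \cite{Cai-Li2023} where exactly this elliptic--transport--Gr\"onwall loop is carried out under the same slip boundary conditions). One minor correction: for the first item you need \eqref{A-control-3} with $m=0$, not \eqref{A-control2} --- the latter gives $\sup\|\nabla u\|_{L^2}^2+\int\rho|\dot u|^2$, while it is \eqref{A-control-3} (with initial datum $\sqrt{\rho_0}\,\dot u|_{t=0}=g$ furnished by the compatibility condition \eqref{compatibility-condition}) that delivers the unweighted bound $\sup\|\sqrt{\rho}\dot u\|_{L^2}^2+\int\|\nabla\dot u\|_{L^2}^2$.
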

Thus combining Proposition \ref{prop-a-priori-estimate}, Proposition \ref{prop-energy-decay} with the higher-order estimates above as well as the local existence in Lemma \ref{lem-local-sol}, we can prove Theorem \ref{thm-global-CNS} by similar arguments as in \cite{Cai-Li2023}. Here, we omit the details for brevity.

\appendix
\section{The mathematical analysis on three terms about density.}
In this appendix, we will give a mathematical analysis on the precise relationship among $(\rho-\bar{\rho})^2$, $G(\rho)$ and $(P(\rho)-P(\bar{\rho}))(\rho-\bar{\rho})$.

\begin{lem}\label{lem-relationship}
  There exists a clear relationship among $(\rho-\bar{\rho})^2$, $G(\rho)$ and $(P(\rho)-P(\bar{\rho}))(\rho-\bar{\rho})$ for any $\rho\in[0,2\tilde{\rho}]$ and $\gamma\in(1,\frac{3}{2}]$. If $\bar{\rho}\ll\tilde{\rho}$, then we obtain
  \begin{equation}\label{relationship}
  \begin{aligned}
  &\frac{P(\bar{\rho})}{\bar{\rho}}(\rho-\bar{\rho})^2\leq(P(\rho)-P(\bar{\rho}))(\rho-\bar{\rho}),\\
  &(\rho-\bar{\rho})^2\leq\frac{1}{C_1}\tilde{\rho}\bar{\rho}^{1-\gamma}G(\rho),\\
  &\bar{\rho}G(\rho)\leq(P(\rho)-P(\bar{\rho}))(\rho-\bar{\rho}).
  \end{aligned}
\end{equation}
with constant $C_1>0$ depending only on $a$ and $\frac{\tilde{\rho}}{\bar{\rho}}$. In fact, it suffices to assume $\frac{\tilde{\rho}}{\bar{\rho}}\geq 3$ here.
\end{lem}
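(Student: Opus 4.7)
The plan is to reduce all three inequalities to statements about the dimensionless ratio $\theta := \rho/\bar{\rho} \in [0,\, 2\tilde{\rho}/\bar{\rho}]$. A direct computation of the integral defining $G$ yields the closed form
\[
G(\rho) \;=\; \frac{1}{\gamma-1}\bigl[P(\rho) - P(\bar{\rho}) - P'(\bar{\rho})(\rho-\bar{\rho})\bigr] \;=\; \frac{a\bar{\rho}^{\gamma}}{\gamma-1}\bigl[\theta^\gamma - 1 - \gamma(\theta-1)\bigr],
\]
which is nonnegative by convexity of $s \mapsto s^\gamma$, and Taylor's remainder gives $\theta^\gamma - 1 - \gamma(\theta-1) = \tfrac{\gamma(\gamma-1)}{2}\,\xi^{\gamma-2}(\theta-1)^2$ for some $\xi$ between $1$ and $\theta$. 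This identity will be the workhorse throughout.

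For the first inequality, I would rewrite the claim as $(\rho-\bar{\rho})\bigl[P(\rho) - P(\bar{\rho}) - \tfrac{P(\bar{\rho})}{\bar{\rho}}(\rho-\bar{\rho})\bigr] \geq 0$ and observe the factorization
\[
P(\rho) - P(\bar{\rho}) - \tfrac{P(\bar{\rho})}{\bar{\rho}}(\rho-\bar{\rho}) \;=\; a\rho\bigl(\rho^{\gamma-1} - \bar{\rho}^{\gamma-1}\bigr),
\]
which has the same sign as $\rho - \bar{\rho}$ since $\gamma - 1 > 0$, so the product is nonnegative. For the second inequality, using the Taylor identity with $\gamma - 2 < 0$ and $\xi \leq \max(1, \theta) \leq 2\tilde{\rho}/\bar{\rho}$, I would bound $\xi^{\gamma-2} \geq (2\tilde{\rho}/\bar{\rho})^{\gamma-2}$ to obtain $G(\rho) \geq \tfrac{a\gamma\,2^{\gamma-2}}{2}\tilde{\rho}^{\gamma-2}(\rho-\bar{\rho})^2$; the required form then follows by choosing $C_1 \leq \tfrac{a\gamma}{2}\,2^{\gamma-2}(\tilde{\rho}/\bar{\rho})^{\gamma-1}$, which is uniformly admissible for $\gamma \in (1, 3/2]$ and $\tilde{\rho}/\bar{\rho} \geq 3$.

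For the third inequality, after substituting the closed form for $G(\rho)$ and cancelling the common factor $a\bar{\rho}^{\gamma+1}$, the claim reduces to showing
\[
f(\theta) \;:=\; (\theta^\gamma - 1)(\theta - 1) \;-\; \tfrac{1}{\gamma-1}\bigl[\theta^\gamma - 1 - \gamma(\theta-1)\bigr] \;\geq\; 0 \quad \text{for all } \theta \geq 0.
\]
Direct calculation gives $f(0) = f(1) = f'(1) = 0$ and $f''(\theta) = \gamma\theta^{\gamma-2}\bigl[(\gamma+1)\theta - \gamma\bigr]$. Thus $f$ is convex on $[\gamma/(\gamma+1), \infty)$ and concave on $(0, \gamma/(\gamma+1)]$. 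On the convex branch, $f \geq 0$ follows from $f(1) = f'(1) = 0$ (the tangent at $\theta=1$ is the horizontal axis), so in particular $f(\gamma/(\gamma+1)) \geq 0$. On the concave arc, $f$ lies above the chord joining $(0,0)$ and $(\gamma/(\gamma+1), f(\gamma/(\gamma+1)))$, whose endpoint values are both nonnegative, so $f \geq 0$ throughout.

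The main obstacle I anticipate is the bookkeeping in the second inequality: confirming that $C_1$ can be chosen independently of $\gamma - 1$ rather than collapsing as $\gamma \to 1^+$. The uniform lower bound $\tilde{\rho}/\bar{\rho} \geq 3$ and the upper constraint $\gamma \leq 3/2$ are precisely what keep the factor $2^{\gamma-2}(\tilde{\rho}/\bar{\rho})^{\gamma-1}$ bounded below by a positive constant depending only on $a$ and the ratio $\tilde{\rho}/\bar{\rho}$; this is conceptually easy but crucial for the downstream application in Proposition~\ref{prop-energy-decay}, where the $\gamma$-independence of the decay rate is the whole point.
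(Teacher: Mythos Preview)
Your proof is correct and takes a genuinely different route from the paper's. The paper argues by studying the monotonicity of the three ratio functions
\[
f(\rho)=\frac{P(\rho)-P(\bar\rho)}{\rho-\bar\rho},\qquad
h(\rho)=\frac{G(\rho)}{(\rho-\bar\rho)^2},\qquad
k(\rho)=\frac{(P(\rho)-P(\bar\rho))(\rho-\bar\rho)}{G(\rho)},
\]
showing in each case (through successive differentiation and sign analysis of auxiliary functions $h_1,k_1,k_2$) that the derivative has a definite sign, and then reading off the extremal value at an endpoint of the $\rho$-interval. By contrast, you exploit the closed form $G(\rho)=\tfrac{1}{\gamma-1}\bigl[P(\rho)-P(\bar\rho)-P'(\bar\rho)(\rho-\bar\rho)\bigr]$ directly: the first inequality collapses to the factorization $a\rho(\rho^{\gamma-1}-\bar\rho^{\gamma-1})$, the second to a single application of Taylor's Lagrange remainder, and the third to a convexity/concavity dichotomy for one dimensionless function. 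Your argument is more elementary and in fact yields a slightly stronger second inequality: since $\gamma\ge1$, $2^{\gamma-2}\ge\tfrac12$ and $(\tilde\rho/\bar\rho)^{\gamma-1}\ge1$, one may take $C_1=a/4$, independent of the ratio $\tilde\rho/\bar\rho$, whereas the paper's constant $\tfrac{a(\ln A-1)}{(1-A^{-1})^2}$ carries that dependence and forces the threshold $A\ge3$. What the paper's monotonicity framework buys in return is the companion \emph{upper} bounds $f(\rho)\le f(\tilde\rho)$ and $k(\rho)\le k(\tilde\rho)$ (their Remark following the lemma), which feed into the estimate $\norm{P(\rho)-P(\bar\rho)}_{L^2}^2\le C(\tilde\rho)\int G(\rho)$ in the decay proposition; these are not visible from your reduction as written but could be recovered by the same techniques.
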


\begin{proof}
 Here we assume $\bar{\rho}\ll\tilde{\rho}$, which means $\bar{\rho}$ is much smaller than $\tilde{\rho}$. First set
\begin{equation*}
  f(\rho)=\frac{(P(\rho)-P(\bar{\rho}))(\rho-\bar{\rho})}{(\rho-\bar{\rho})^2}=\frac{P(\rho)-P(\bar{\rho})}{\rho-\bar{\rho}},
\end{equation*}
and let
\begin{equation*}
  f(\bar{\rho})=\lim_{\rho\rightarrow\bar{\rho}}f(\rho)=P'(\bar{\rho}).
\end{equation*}
Then a direct calculation yields that
\begin{equation}\label{f-1rd-derivative}
  f'(\rho)=\frac{P'(\rho)(\rho-\bar{\rho})-(P(\rho)-P(\bar{\rho}))}{(\rho-\bar{\rho})^2}
  =(\rho-\bar{\rho})^{-2}\int_{\bar{\rho}}^{\rho}(P'(\rho)-P'(s))ds\geq0,
\end{equation}
which implies
\begin{equation}\label{f-bound}
  f(\rho)\in[f(0),f(\tilde{\rho})]=[\frac{P(\bar{\rho})}{\bar{\rho}},f(\tilde{\rho})].
\end{equation}

We then define
\begin{equation*}
  h(\rho)=\frac{G(\rho)}{(\rho-\bar{\rho})^2}
\end{equation*}
and also set
\begin{equation*}
  h(\bar{\rho})=\lim_{\rho\rightarrow\bar{\rho}}h(\rho)=\frac{1}{2}G''(\bar{\rho})=\frac{P'(\bar{\rho})}{2\bar{\rho}}.
\end{equation*}
A similar calculation gives that
\begin{equation}\label{h-1rd-derivative}
  h'(\rho)=\frac{G'(\rho)(\rho-\bar{\rho})-2G(\rho)}{(\rho-\bar{\rho})^3}=\frac{h_1(\rho)}{(\rho-\bar{\rho})^3},
\end{equation}
It is easy to verify that
\begin{equation*}
  h_1(\bar{\rho})=0
\end{equation*}
and
\begin{equation}\label{h1-1rd-derivative}
  h_1'(\rho)=G''(\rho)(\rho-\bar{\rho})-G'(\rho)=\int_{\bar{\rho}}^{\rho}(G''(\rho)-G''(s))ds\leq0
\end{equation}
due to the decreasing monotonicity of $G''(\rho)=\frac{P'(\rho)}{\rho}$ for $\gamma\in(1,\frac{3}{2}]$.

Then we obtain
\begin{equation*}
  h_1(\rho)=\begin{cases}
              >0, & \mbox{if }\rho<\bar{\rho}, \\
              <0, & \mbox{if }\rho>\bar{\rho},
            \end{cases}
\end{equation*}
which implies
\begin{equation*}
  h'(\rho)=\begin{cases}
              <0, & \mbox{if }\rho<\bar{\rho}, \\
              =\frac{1}{6}G'''(\bar{\rho})<0, &\mbox{if }\rho=\bar{\rho},\\
              <0, & \mbox{if }\rho>\bar{\rho}.
            \end{cases}
\end{equation*}
That means 
\begin{equation}\label{h-bound1}
  h(\rho)\in[h(\tilde{\rho}),h(0)]=[h(\tilde{\rho}),\frac{P(\bar{\rho})}{\bar{\rho}^2}].
\end{equation}
Here, we need to estimate $h(\tilde{\rho})$. Since $\bar{\rho}\ll\tilde{\rho}$, it holds that\footnote{The focus of this estimate is at the case $\gamma\rightarrow1$, so we need keep some intrinsical relation unchanged during the calculation as $\gamma$ tending to 1.}
\begin{equation}\label{h-bound-transition1}
  \begin{aligned}
  h(\tilde{\rho})&=\left(1-\frac{\bar{\rho}}{\tilde{\rho}}\right)^{-2}\tilde{\rho}^{-1}a\left(\frac{1}{\gamma-1}(\tilde{\rho}^{\gamma-1}-\bar{\rho}^{\gamma-1})
  +\bar{\rho}^{\gamma}(\tilde{\rho}^{-1}-\bar{\rho}^{-1})\right)\\
  &=a(1-A^{-1})^{-2}\left(\frac{1}{\gamma-1}A^{-(2-\gamma)}-\frac{\gamma}{\gamma-1}A^{-1}+A^{-2}\right)\bar{\rho}^{\gamma-2}\\
  &=a(1-A^{-1})^{-2}\left(\frac{1}{\gamma-1}(A^{\gamma-1}-\gamma)A^{-1}+A^{-2}\right)\bar{\rho}^{\gamma-2}\\
  &\geq a(1-A^{-1})^{-2}\left((\ln A-1)A^{-1}+A^{-2}\right)\bar{\rho}^{\gamma-2}\\
  &\geq\frac{a(\ln A-1)}{(1-A^{-1})^2}A^{-1}\bar{\rho}^{\gamma-2}=\frac{a(\ln A-1)}{(1-A^{-1})^2}\frac{\bar{\rho}^{\gamma-1}}{\tilde{\rho}}
  \end{aligned}
\end{equation}
with $A=\frac{\tilde{\rho}}{\bar{\rho}}\gg1$ (actually it suffices to set $A\geq 3$). Then there exists a constant $C_1>0$ depending only on $a$ and $\frac{\tilde{\rho}}{\bar{\rho}}$, such that
\begin{equation}\label{h-bound2}
  h(\rho)\in[C_1\frac{\bar{\rho}^{\gamma-1}}{\tilde{\rho}},\frac{P(\bar{\rho})}{\bar{\rho}^2}].
\end{equation}

Finally, define
\begin{equation*}
  k(\rho)=\frac{(P(\rho)-P(\bar{\rho}))(\rho-\bar{\rho})}{G(\rho)}=\frac{F(\rho)}{G(\rho)}
\end{equation*}
and also set
\begin{equation*}
  k(\bar{\rho})=\lim_{\rho\rightarrow\bar{\rho}}k(\rho)=\frac{2P'(\bar{\rho})}{P'(\bar{\rho})\bar{\rho}^{-1}}=2\bar{\rho}.
\end{equation*}
Then an analogous computation gives that
\begin{equation}\label{k-1rd-derivative}
  k'(\rho)=\frac{F'(\rho)G(\rho)-F(\rho)G'(\rho)}{G^2(\rho)}=\frac{k_1(\rho)}{G^2(\rho)}.
\end{equation}
It is easy to verify that
\begin{equation*}
  k_1(\bar{\rho})=0
\end{equation*}
and
\begin{equation}\label{k1-1rd-derivative}
  \begin{aligned}
  k_1'(\rho)&=F''(\rho)G(\rho)-F(\rho)G''(\rho)\\
  &=[P''(\rho)(\rho-\bar{\rho})+2P'(\rho)][\frac{1}{\gamma-1}P(\rho)+P(\bar{\rho})(1-\frac{\gamma}{\gamma-1}\frac{\rho}{\bar{\rho}})]\\
  &\quad-(P(\rho)-P(\bar{\rho}))(\rho-\bar{\rho})\frac{P'(\rho)}{\rho}\\
  &=\frac{\gamma}{\gamma-1}P(\bar{\rho})P''(\rho)(\rho-\bar{\rho})(1-\frac{\rho}{\bar{\rho}})+2P'(\rho)G(\rho)\\
  &=\frac{2P'(\rho)}{\rho}[\rho G(\rho)-\frac{\gamma}{2}\frac{P(\bar{\rho})}{\bar{\rho}}(\rho-\bar{\rho})^2]=\frac{2P'(\rho)}{\rho}k_2(\rho).
  \end{aligned}
\end{equation}
Obviously, $k_2(\rho)$ satisfies that
\begin{equation*}
  k_2(\bar{\rho})=k_2'(\bar{\rho})=k_2''(\bar{\rho})=0,
\end{equation*}
and
\begin{equation}\label{k2-derivative}
  \begin{aligned}
  k_2'(\rho)&=\rho G'(\rho)+G(\rho)-P'(\bar{\rho})(\rho-\bar{\rho}),\\
  k_2''(\rho)&=\rho G''(\rho)+2G'(\rho)-P'(\bar{\rho})=2G'(\rho)+P'(\rho)-P'(\bar{\rho}),\\
  k_2'''(\rho)&=2G''(\rho)+P''(\rho)=2\frac{P'(\rho)}{\rho}+P''(\rho)\geq0.
  \end{aligned}
\end{equation}
That means
\begin{equation*}
  \begin{aligned}
  k_2''(\rho)&=\begin{cases}
                <0, & \mbox{if }\rho<\bar{\rho}, \\
                >0, & \mbox{if }\rho>\bar{\rho},
              \end{cases}\\
  k_2'(\rho)&\geq0,\\
  k_2(\rho)&=\begin{cases}
                <0, & \mbox{if }\rho<\bar{\rho}, \\
                >0, & \mbox{if }\rho>\bar{\rho},
              \end{cases}            
  \end{aligned}
\end{equation*}
which implies
\begin{equation*}
  k_1(\rho)\geq 0,\quad k'(\rho)\geq0.
\end{equation*}
Thus we get the bound of $k(\rho)$ as
\begin{equation}\label{k-bound}
  k(\rho)\in[k(0),k(\tilde{\rho})]=[\bar{\rho},k(\tilde{\rho})].
\end{equation}
Therefore, we conclude from \eqref{f-bound}, \eqref{h-bound2} and \eqref{k-bound} that
\begin{equation*}
  \begin{aligned}
  &\frac{P(\bar{\rho})}{\bar{\rho}}(\rho-\bar{\rho})^2\leq(P(\rho)-P(\bar{\rho}))(\rho-\bar{\rho}),\\
  &(\rho-\bar{\rho})^2\leq\frac{1}{C_1}\tilde{\rho}\bar{\rho}^{1-\gamma}G(\rho),\\
  &\bar{\rho}G(\rho)\leq(P(\rho)-P(\bar{\rho}))(\rho-\bar{\rho}).
  \end{aligned}
\end{equation*}
and complete the proof of Lemma \ref{lem-relationship}.
\end{proof}

\begin{rem}\label{rem-relationship}
  Indeed, we can still show the following relationship between $(P(\rho)-P(\bar{\rho}))(\rho-\bar{\rho})$ and $G(\rho)$. As in \eqref{k-bound}, for any $\rho\in[0,\tilde{\rho}]$,
  \begin{equation}\label{k-another-bound1}
    (P(\rho)-P(\bar{\rho}))(\rho-\bar{\rho})\leq k(\tilde{\rho})G(\rho).
  \end{equation}
  Here we aim to determine the upper bound of $k(\tilde{\rho})$. Just as done in \eqref{h-bound-transition1}, we have
  \begin{equation}\label{k-another-bound-transition1}
    \begin{aligned}
    k(\tilde{\rho})&=\frac{\bar{\rho}^{\gamma+1}A^{\gamma+1}(1-A^{-\gamma})(1-A^{-1})}{\bar{\rho}^{\gamma}
    A\left(\frac{1}{\gamma-1}(A^{\gamma-1}-\gamma)+A^{-1}\right)}\\
    &=\frac{\bar{\rho}A^{\gamma}(1-A^{-\gamma})(1-A^{-1})}{\frac{1}{\gamma-1}(A^{\gamma-1}-\gamma)+A^{-1}}\\
    &\leq\frac{\bar{\rho}A^{\gamma}(1-A^{-\gamma})(1-A^{-1})}{\ln A-1+A^{-1}}\\
    &=\frac{\tilde{\rho}A^{\gamma-1}(1-A^{-\gamma})(1-A^{-1})}{\ln A-1+A^{-1}}
    \end{aligned}
  \end{equation}
  with $A=\frac{\tilde{\rho}}{\bar{\rho}}\geq3$ and $\gamma\in(1,\frac{3}{2}]$. However, if choosing $A\gg1$ as $\gamma\rightarrow1$, we can get a better estimate as
  \begin{equation}\label{k-another-bound-transition2}
    \begin{aligned}
    k(\tilde{\rho})&=\frac{\bar{\rho}A^{\gamma}(1-A^{-\gamma})(1-A^{-1})}{\frac{1}{\gamma-1}(A^{\gamma-1}-\gamma)+A^{-1}}\\
    &\leq\frac{\bar{\rho}A^{\gamma}(1-A^{-\gamma})(1-A^{-1})}{\frac{1}{3(\gamma-1)}A^{\gamma-1}}\\
    &\leq3(\gamma-1)\tilde{\rho}.
    \end{aligned}
  \end{equation}
  under the condition
  \begin{equation}\label{A-restriction}
    A\geq3^{\frac{1}{\gamma-1}}
  \end{equation}
  which implies
  \begin{equation*}
    A^{\gamma-1}\geq3\geq2\gamma
  \end{equation*}
  for any $\gamma\in(1,\frac{3}{2}]$. 
  
  Thus we conclude that if $A=\frac{\tilde{\rho}}{\bar{\rho}}\geq3^{\frac{1}{\gamma-1}}$, then for any $\gamma\in(1,\frac{3}{2}]$,
  \begin{equation}\label{k-another-bound2}
    (P(\rho)-P(\bar{\rho}))(\rho-\bar{\rho})\leq 3\tilde{\rho}(\gamma-1)G(\rho),
  \end{equation}
  and if $A=\frac{\tilde{\rho}}{\bar{\rho}}\in[3,3^{\frac{1}{\gamma-1}}]$ with $\gamma\in(1,\frac{3}{2}]$, it holds from \eqref{k-another-bound-transition1} that
  \begin{equation}\label{k-another-bound3}
    (P(\rho)-P(\bar{\rho}))(\rho-\bar{\rho})\leq \frac{3\tilde{\rho}}{\ln3-1}G(\rho).
  \end{equation}
\end{rem}

\medskip

\section*{\bf Data availability}
No data was used for the research described in the article.

\section*{\bf Conflicts of interest}

The authors declare no conflict of interest.

\end{document}